\documentclass[12pt]{amsart}

\title[Conformal dimension and random groups]{Conformal dimension and random groups}
\author{John M. Mackay}
\address{Department of Mathematics \\
 University of Illinois at Urbana-Champaign \\ Urbana IL.}
\curraddr{Mathematical Institute, 24-29 St Giles', Oxford OX1 3LB, UK.}
\email{jmmackay@illinois.edu}
\subjclass[2010]{Primary 20F65; Secondary 20F06, 20F67, 20P05, 57M20}
\keywords{Conformal dimension, random groups}
\date{\today}

\usepackage{amsmath,amsthm,amsfonts,graphicx,amssymb}
\usepackage{hyperref}

\numberwithin{equation}{section}
\newtheorem{theorem}[equation]{Theorem}
\newtheorem{proposition}[equation]{Proposition}

\newtheorem{lemma}[equation]{Lemma}

\newtheorem{definition}[equation]{Definition}

\newtheoremstyle{citing}
  {3pt}
  {3pt}
  {\itshape}
  {}
  {\bfseries}
  {}
  {.5em}
  {\thmnote{#3}}

\theoremstyle{citing}
\newtheorem*{varthm}{}

\makeatletter

\newenvironment{qedequation*}{%
  \pushQED{\qed}%
  \mathdisplay@push
  \st@rredtrue \global\@eqnswfalse
  \mathdisplay{equation*}%
}{%
  \endmathdisplay{equation*}%
  \mathdisplay@pop
  \ignorespacesafterend
  \popQED\@endpefalse
}
\makeatother

\DeclareMathOperator{\AC}{AC}
\DeclareMathOperator{\St}{St}

\newcommand{\gam}{\gamma}
\newcommand{\eps}{\epsilon}

\newcommand{\sig}{\sigma}
\newcommand{\bdry}{\partial_\infty}

\newcommand{\cC}{\mathcal{C}}
\newcommand{\cD}{\mathcal{D}}

\newcommand{\Cdim}{\mathcal{C}\mathrm{dim}}

\newcommand{\dimP}{\dim_{\mathcal{P}}}
\newcommand{\cP}{\mathcal{P}}

\newcommand{\ra}{\rightarrow}

\newcommand{\N}{\mathbb{N}}
\newcommand{\Z}{\mathbb{Z}}
\newcommand{\ba}{\mathbf{a}}


\def\XXint#1#2#3{{\setbox0=\hbox{$#1{#2#3}{\int}$}
\vcenter{\hbox{$#2#3$}}\kern-.5\wd0}}

\numberwithin{equation}{section}

\begin{document}

\begin{abstract}
We give a lower and an upper bound for the conformal dimension 
of the boundaries of certain small cancellation groups.

We apply these bounds to the few relator and density models for
random groups.
This gives generic bounds of the following form, where $l$ is the relator length, going to infinity.

(a) $1 + 1/C < \Cdim(\bdry G) < C l / \log(l)$, for the few relator model, and

(b) $1 + l / (C\log(l)) < \Cdim(\bdry G) < C l$, for the density model,
at densities $d < 1/16$.

In particular, for the density model at densities $d < 1/16$, as the relator
length $l$ goes to infinity, the random groups will pass through infinitely many
different quasi-isometry classes.
\end{abstract}

\maketitle

\section{Introduction}\label{sec-intro}

\subsection{Overview}
In the study of random groups, one considers typical properties 
of finitely presented groups.
There are several ways to make this idea precise.
We will work in two of the most common models
for a random group: the few relator model and the density model,
both due to Gromov.  Our goal is to study the large scale geometry of such groups.

In each of these models, a typical group is (Gromov) hyperbolic~\cite{Gro-87-hyp-groups, Gro-91-asymp-inv}.
To any hyperbolic group $G$ we can associate a boundary at infinity $\bdry G$,
which is a metric space where the metric is canonically defined
up to a quasi-symmetric homeomorphism.
For example, we can choose any visual metric on the boundary.
The boundary captures the quasi-isometry type of the group:
two finitely presented hyperbolic groups are quasi-isometric if and only if
their boundaries are quasi-symmetric~\cite{Pau-96-qm-qi}.

In both models, the boundary of a random group is
homeomorphic to the Menger curve (also called the Menger sponge),
but this is not enough to determine the quasi-symmetric type 
of the boundary, and hence the large scale geometry of the group.
Indeed, Bourdon found an infinite family of hyperbolic groups
whose boundaries are all homeomorphic to the Menger curve,
but which are pairwise non-quasi-isometric~\cite{Bou-97-GAFA-exact-cdim}.

The invariant which Bourdon used, and which we will use in this paper,
is due to Pansu.
The \emph{conformal dimension} of a metric space $X$ is 
the infimal Hausdorff dimension of all metric spaces 
quasi-sym\-met\-ri\-cal\-ly equivalent to $X$, and is denoted by 
$\Cdim(X)$~\cite{Pan-89-cdim, Mac-Tys-cdimexpo}.
Recall that a homeomorphism is quasi-symmetric if there is 
uniform control on how it distorts 
annuli~\cite[(1.2)]{TV-80-qs}. (See also~\cite[Chapter 10]{Hei-01-lect-analysis}.)

Conformal dimension is clearly a quasi-symmetric invariant of a metric space, and 
consequently the conformal dimension of $\bdry G$
is canonically defined and depends only on the quasi-isometry type of the group.
(For more discussion of this invariant, see \cite{Kle-06-ICM,Mac-Tys-cdimexpo}.)
Bourdon's family of groups have boundaries with conformal dimension
attaining values in a dense subset of $(1,\infty)$~\cite[Th\'eor\`eme 1.1]{Bou-97-GAFA-exact-cdim},
and so they lie in infinitely many different quasi-isometry classes.

In this paper we give the first significant progress towards calculating the
conformal dimension of the boundary of a random group.
(A question raised by Gromov and Pansu; see the following subsection.)
In particular, we show that in the density model with $d<1/16$, as the lengths of the relators
tend to infinity, the conformal dimension of the boundary also tends to infinity,
passing through infinitely many different quasi-isometry types.

\subsection{Statement of results}

The simplest model of a random group is given by the
few relator model.  
Throughout this paper we fix a finite generating set $S$, $|S| = m \geq 2$. 

\begin{definition}[Few relator model]\label{def-gen-few-rel}
	Fix a finite number of relators $n\geq 1$.
	Consider all cyclically reduced words of length at most $l$ in $\langle S \rangle$.
	Consider all presentations $\langle S \mid r_1, \ldots, r_n \rangle$
	where $r_1, \ldots, r_n$ are chosen from this set of words 
	uniformly and independently at random.

	A property $\cP$ is \emph{generic} in the few relator model (for fixed $n$),
	if the proportion of all such presentations at length $l$ which satisfy 
	$\cP$ goes to $1$ as $l \ra \infty$.  
	In this case, we say that \emph{a random (few relator) group has property $\cP$}.
\end{definition}

This model was introduced by Gromov~\cite{Gro-87-hyp-groups},
who observed that a random few relator group
will satisfy the $C'(1/6)$ small cancellation condition, and so be hyperbolic.
The algebraic properties of these groups, such as freeness of subgroups and
isomorphism type, have been studied by Arzhantseva, Ol'shanskii,
Kapovich, Schupp, and others \cite{Arz-Ol-generic-subgroups-free,KS-05-rand-isom-one-rel,KSS-06-rand-isom-rigid}.  
For more discussion, see~\cite[I.3.c]{Oll-05-rand-grp-survey}.

The geometry of such groups was considered by Champetier~\cite{Cha-95-rand-grps}.
He used small cancellation techniques to show that generic 
few relator groups have boundaries homeomorphic to the Menger curve
(see Theorem~\ref{thm-champ-twelth-menger}).

The few relator model can be viewed as the ``density $0$'' case of a more general
model, where the number of relators grows as $l \ra \infty$.

\begin{definition}[Density model {\cite[Chapter 9]{Gro-91-asymp-inv}}]\label{def-gen-density}
	Fix a parameter $d \in (0,1)$, called the \emph{density}.
	Consider all cyclically reduced words of length $l$ in $\langle S \rangle$.
	Consider all presentations which choose as relators
	$(2m-1)^{dl}$ of these words uniformly and independently
	at random.

	A property $\cP$ holds generically in the density model (at fixed density $d$),
	if the proportion of all such presentations at length $l$ which satisfy 
	$\cP$ goes to $1$ as $l \ra \infty$.
\end{definition}

Gromov showed that the density model has a phase transition:
for densities $d< 1/2$, a random group will be one ended and hyperbolic,
but for densities $d > 1/2$, a random group will be trivial or $\Z/2\Z$
({\cite[Section 9.B]{Gro-91-asymp-inv}}, {\cite[Theorem 11]{Oll-05-rand-grp-survey}}).
%

The boundary of a random group at density $d< 1/2$ 
is homeomorphic to the Menger curve.
At densities $d<1/24$, this follows from Champetier's Theorem~\ref{thm-champ-twelth-menger}.
A proof that applies to all densities $0 < d < 1/2$ is given in \cite{DGP-10-density-menger}.

Since we know that random groups in both the few relator and density model
are hyperbolic, it makes sense to ask for estimates of the conformal dimension
of their boundaries.

For any hyperbolic group with boundary homeomorphic to the Men\-ger curve,
the conformal dimension of the boundary will be strictly greater than one~\cite{Mac-10-confdim},
and finite~\cite{Coo-93-meas-bdry}.
In this paper we give explicit non-trivial bounds for the conformal dimension of a random group.
\begin{theorem}\label{thm-main-few-rel}
	There exists $C>1$ so that,
	for fixed $m \geq 2$, $n \geq 1$, the
	conformal dimension of a random few relator group satisfies
	\begin{equation*}
		1 + \frac{1}{C} \leq \Cdim(\bdry G) \leq	C \log(2m-1) \cdot \frac{l}{\log(l)}.
	\end{equation*}
\end{theorem}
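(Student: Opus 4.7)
My plan is to derive Theorem~\ref{thm-main-few-rel} by reducing to two general bounds on conformal dimension of small cancellation groups (the paper's main technical results, promised in the abstract), and then verifying that random few relator groups generically satisfy the relevant hypotheses.

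\textbf{Step 1 (Small cancellation genericity).} For any fixed $\lambda>0$, I would first verify that a random few relator group is $C'(\lambda)$ with all relators of length in $[l/2,l]$, with probability tending to $1$ as $l\to\infty$. This is a standard counting estimate: for the fixed number $n$ of independent relators drawn from cyclically reduced words of length $\leq l$, the expected number of pairs sharing a subword of length $\geq \lambda l$ is bounded by $n^2 l^{O(1)}(2m-1)^{-\lambda l}$, which tends to $0$; short relators are similarly exponentially rare in the same pool.

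\textbf{Step 2 (Lower bound).} To obtain $\Cdim(\bdry G)\geq 1+1/C$ uniformly in $l$, I would apply the paper's general lower bound for $C'(\lambda_0)$ groups with $\lambda_0$ sufficiently small (satisfied here by Step 1). The excerpt already yields $\Cdim(\bdry G)>1$ via the Menger curve property plus Mackay's theorem, but this is not uniform in $l$; uniformity is subtle, since as $l$ grows the Cayley $2$-complex looks more tree-like at small scales, and one might worry $\Cdim$ drops back toward $1$. The natural strategy is to locate inside the Cayley $2$-complex a quasi-isometrically embedded Bourdon-style round tree (or analogous combinatorially fat family of arcs) whose existence is forced by the small cancellation structure; its boundary embeds quasi-symmetrically in $\bdry G$ and has conformal dimension bounded below by a constant depending only on $\lambda_0$ and $m$.

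\textbf{Step 3 (Upper bound).} To obtain $\Cdim(\bdry G)\leq C\log(2m-1)\cdot l/\log(l)$, I would produce a metric in the quasi-symmetric gauge of $\bdry G$ with Hausdorff dimension of this order. A naive visual metric with parameter $\eps \sim 1/l$ (the largest admissible value, since $\delta \sim l$ for $C'(1/6)$ groups with relators of length $l$) has Hausdorff dimension roughly $l\log(2m-1)$. The key improvement by $1/\log(l)$ uses the tree-like geometry of the Cayley $2$-complex at scales below $l$: regular trees have conformal dimension zero on their boundaries, so one can modify the metric on these tree-like pieces, while staying in the quasi-symmetric class, to compress their contribution to the Hausdorff dimension. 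Technically, this amounts to verifying a combinatorial modulus estimate with exponent $Q=C\log(2m-1)\cdot l/\log(l)$ on graph approximations of $\bdry G$ at scales $e^{-n}$.

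\textbf{Main obstacle.} The upper bound is the hard part. The construction of the compressed metric, or equivalently the sharp combinatorial modulus estimate, requires a careful accounting of how tree-like patches of the Cayley complex fit into each relator-length interval, and extracting the precise logarithmic factor demands close attention to the interaction between the local (tree-like) structure and the mesoscale (relator-length) structure. The lower bound, by contrast, should follow essentially formally once a uniform fat curve or round-tree family has been exhibited in the small cancellation complex.
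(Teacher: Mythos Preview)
Your lower bound strategy (Step~2) is essentially the paper's: build a round tree in the Cayley complex and apply a Pansu--Bourdon lemma. You are also right that the subtlety is uniformity in $l$; the paper handles this via an explicit bound $\Cdim(\bdry G)\geq 1 + C\log(2m)\cdot M^*/\log(M)$, where $M^*$ is the length at which every reduced word appears as a subword of some relator. Generically $M^*\asymp \log(l)$ (a counting argument), and since $M=l$ the ratio $M^*/\log(M)$ is bounded below by a constant. You should make this ``every subword appears'' hypothesis explicit, since it is what drives the branching of the round tree.

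Your upper bound strategy, however, misses the paper's key mechanism, and your Step~1 is too weak to support it. The paper does \emph{not} start from a visual metric with $\eps\sim 1/l$ and then compress it via combinatorial modulus. Instead it first proves the quantitative statement that a random few relator group is generically $C'(\lambda_0(l))$ with $\lambda_0(l)\asymp \log(l)/l$ --- much stronger than your Step~1, which only gives $C'(\lambda)$ for each fixed $\lambda$. This strong small cancellation feeds into a Bonk--Foertsch asymptotic upper curvature bound: one shows geodesic $n$-gons in $\Gamma(G,S)$ are $\big(\tfrac{M}{2\log\lfloor 1/\lambda - 4\rfloor}\log n + C\big)$-slim, hence $\Gamma$ is $AC_u(\kappa)$ with $\sqrt{-\kappa}\asymp \log(1/\lambda)/M$. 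By Bonk--Foertsch this allows a visual parameter $\eps$ of that order, and then the \emph{ordinary} Hausdorff dimension of that visual metric, namely $h(G)/\eps\leq \log(2m-1)/\eps$, already gives $\Cdim(\bdry G)\lesssim \log(2m-1)\cdot l/\log(l)$. The $1/\log(l)$ gain comes entirely from $\log(1/\lambda_0(l))\asymp\log(l)$, not from any metric surgery on tree-like patches. Your proposed combinatorial modulus route is not obviously wrong, but it is a much harder and vaguer plan than the clean two-step reduction the paper actually uses; and in any case it cannot even get started without the sharp $\lambda_0(l)\asymp\log(l)/l$ estimate, which your Step~1 does not provide.
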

Note that the lower bound is independent of $l$, and the upper bound is sub-linear.
The conclusion of this theorem involves $n$ implicitly: let
$P(m,n,l)$ be the proportion of all groups with $m$ generators and
$n$ cyclically reduced relators of word length at most $l$ which
satisfy the above estimate.  Then for fixed $m, n$ we have
$P(m,n,l) \ra 1$ as $l \ra\infty$, however the rate of convergence depends on $n$.

\begin{theorem}\label{thm-main-density}
	There exists $C>1$ so that, for fixed $m \geq 2$, $0 < d < 1/16$, the
	conformal dimension of a random group at density $d$ satisfies
	\begin{equation*}
		1 + \frac{d \log(2m)}{C} \cdot \frac{l}{\log(l)} \leq \Cdim(\bdry G) \leq \frac{C\log(2m-1)}{|\log(d)|} \cdot l.
	\end{equation*}
	
	In particular, as $l \ra \infty$, generic groups pass through 
	infinitely many different quasi-isometry classes.
\end{theorem}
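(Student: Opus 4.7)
The plan is to derive both bounds from the general upper and lower conformal-dimension estimates for small cancellation groups promised by the abstract, and then verify that a random presentation at density $d<1/16$ satisfies the hypotheses with probability tending to $1$. A well-known computation (Gromov, Ollivier) shows that at density $d$ the pieces of a random presentation have length at most $2dl$ with probability $\to 1$ as $l\to\infty$. Since $d<1/16$ gives $2d<1/8$, such a random presentation generically satisfies a strong form of $C'(\lambda)$ small cancellation, and the hyperbolicity constant of its Cayley $2$-complex is of order $l$. This places us squarely in the scope of the general small cancellation bounds.

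For the \emph{upper bound}, I would apply the general upper estimate to the polygonal $2$-complex with $m$ generators, faces of length $l$, and pieces of length at most $2dl$. Heuristically, a visual-type metric on $\bdry G$ has Hausdorff dimension of order $\log(\text{growth})/|\log(\text{contraction per step})|$, where the numerator is controlled by $\log(2m-1)$, the factor $l$ arises from the large hyperbolicity constant (a visual parameter of order $1/l$), and the improvement from the naive bound $l\log(2m-1)$ to
\[
\Cdim(\bdry G) \leq \frac{C\log(2m-1)}{|\log(d)|}\cdot l
\]
comes from exploiting the piece bound $2dl$: each step in a geodesic through the $2$-complex must traverse a relator corner near which the local branching is controlled by a factor of $d$. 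I would spend the main effort upgrading the visual metric to extract this extra factor of $|\log(d)|^{-1}$.

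For the \emph{lower bound} the strategy is to produce, with high probability, enough ``round-tree'' or branching substructures inside the Cayley $2$-complex to apply the general lower bound theorem (à la Bourdon--Kleiner / Pansu). At density $d$ there are $(2m-1)^{dl}$ relators, so each vertex is incident to a large number of faces, and one can attempt to build trees whose branching is exponential in $dl$ while the separating paths have length polynomial in $l$. Feeding these parameters into the general lower bound yields
\[
\Cdim(\bdry G) \geq 1 + \frac{\log\bigl((2m)^{dl}\bigr)}{C\log(l)} = 1 + \frac{d\log(2m)}{C}\cdot\frac{l}{\log(l)}.
\]
The main obstacle is the probabilistic step: one must show that such branching structures occur generically, say around almost every vertex, even though the relators are sparse ($(2m-1)^{dl}$ out of $(2m-1)^l$). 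A first and second moment computation should suffice, but the bookkeeping — checking that the required relators and their intersections are in general position — is delicate, and it is here that the hypothesis $d<1/16$, rather than merely $d<1/2$, is essentially used.

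Finally, the ``in particular'' conclusion is immediate from the lower bound. Conformal dimension is a quasi-isometry invariant of hyperbolic groups \cite{Pau-96-qm-qi}, and the lower bound $1 + \frac{d\log(2m)}{C}\cdot\frac{l}{\log(l)}$ tends to infinity with $l$; hence for every $K$ there exists $l$ such that a random group at density $d$ and length $l$ satisfies $\Cdim(\bdry G)\geq K$ with probability at least $1/2$. Therefore the set of conformal dimensions realised by random groups at density $d$ is unbounded as $l\to\infty$, and since each quasi-isometry class accounts for only a single value of $\Cdim(\bdry G)$, infinitely many quasi-isometry classes must be represented.
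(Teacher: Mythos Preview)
Your high-level strategy matches the paper's: the upper bound comes from an improved visual parameter for $C'(\lambda)$ groups, and the lower bound comes from a round-tree construction feeding into Bourdon's lemma. However, two points in your plan diverge from what the paper actually does, and one of them is a genuine misidentification.

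First, for the lower bound you propose to verify the existence of branching structures by a first/second moment argument, checking that relators and their intersections are in general position. The paper avoids this entirely. The probabilistic input is much simpler: one only needs that \emph{every} reduced word of length $M^* \approx dl$ appears as a subword of some relator (Proposition~\ref{prop-dens-wordlength}), which is a straightforward union-bound calculation. Once that holds, the round-tree construction (Theorem~\ref{thm-sc-cdim-lower}) is completely deterministic: at each stage one extends short geodesic segments and uses the subword hypothesis to cap them with relator faces. No moment method or general-position argument is needed.

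Second, and more importantly, you misattribute the role of the hypothesis $d<1/16$. It is \emph{not} used in the probabilistic step. It is used so that the presentation is generically $C'(1/8-\delta)$ (since pieces have length $\approx 2dl$, one needs $2d<1/8$), and this small cancellation condition is a hypothesis of the deterministic round-tree theorem. The construction of the round tree relies on Champetier-type lemmas (Lemmas~\ref{lem-champ-extend-geo} and~\ref{lem-champ-relator-parts}) that require control slightly beyond $C'(1/6)$; this is where the threshold $1/16$ enters.

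For the upper bound, your heuristic is correct but vague. The paper makes it precise via the Bonk--Foertsch notion of asymptotic upper curvature: one shows geodesic $n$-gons in a $C'(\lambda)$ group are $\bigl(\tfrac{M}{2\log\lfloor 1/\lambda-4\rfloor}\log n + C\bigr)$-slim, which yields a visual parameter $\epsilon \approx \tfrac{2}{M}\log(1/\lambda)$ and hence the factor $|\log d|^{-1}$ after substituting $\lambda\approx 2d$.
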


Gromov \cite[9.B, p.276, (g)]{Gro-91-asymp-inv} and 
Pansu \cite[IV.b., p.70]{Oll-05-rand-grp-survey}, had asked
whether the conformal dimension of a random group in the density model 
can be used to detect the particular density $d$.
Theorem~\ref{thm-main-density}, gives progress towards solving this problem.

Roughly speaking, the few relator model corresponds to taking $d = 1/l$.
We see this reflected in factors of $1/\log(l)$ and $1/|\log(d)|$ in
the upper bounds of Theorems~\ref{thm-main-few-rel} and \ref{thm-main-density}.

There are several natural questions that remain.
For example, does the conformal dimension of a random few relator group go to
infinity as the relator length goes to infinity?
What happens at densities $d \geq 1/16$? (See the discussion below.)
Can one find a function $f(d,l)$ so that the conformal dimension of
a random group at density $d$ satisfies
$f(d,l) \lesssim \Cdim(\bdry G) \lesssim f(d,l)$?
(We write $x \lesssim y$ if $x \leq C y$, for some suitable constant $C$.)


For more background on random groups we refer the reader 
to \cite{Ghys-rand-grp-survey} and \cite{Oll-05-rand-grp-survey},
and on conformal dimension to \cite{Mac-Tys-cdimexpo}.

\subsection{Outline of proof}

The random groups that we consider are all $C'(1/6)$ small cancellation groups.
Recall that a group presentation $\langle S | R \rangle$
is $C'(\lambda)$ if every word $u$ which appears in two distinct ways in
(cyclic conjugates of) relators $r_1, r_2 \in R$, or their inverses, satisfies
$|u| < \min\{|r_1|,|r_2|\}$.

In the few relator model this is straightforward to prove;
a more refined estimate is found in Proposition~\ref{prop-lambda-gen-upper},
where we show that a generic few relator group will be
$C'(\lambda)$ with $\lambda \lesssim \log(l)/{l}$.
In the density model, we have the following result.
\begin{proposition}[{\cite[Section 9.B]{Gro-91-asymp-inv}}]\label{prop-dens-sc} 
	For $d>0$, and $\lambda > 2d$,
	a random group at density $d$ has
	the $C'(\lambda)$ metric small cancellation condition.
	
	For $\lambda >0$ and $2d > \lambda$,
	a random group at density $d$ does not have 
	the $C'(\lambda)$ metric small cancellation condition.
\end{proposition}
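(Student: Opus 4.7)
The plan is to prove both statements by first and second moment arguments applied to the number of ``matching piece'' configurations of length $\lceil \lambda l \rceil$ among the chosen relators. Throughout, let $N = (2m-1)^{dl}$ denote the number of relators and $k = \lceil \lambda l \rceil$ the target piece length.

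For the positive assertion, fix $\lambda > 2d$. A failure of $C'(\lambda)$ is witnessed by a pair of relators $r_i, r_j$ (possibly with $i = j$, possibly inverted), together with starting positions $p_i, p_j \in \{0, \ldots, l-1\}$ on their cyclic conjugates, such that the length-$k$ subwords read off from these data coincide as words in $\langle S \rangle$. Up to bounded factors from the inversion and indexing choices, the number of such configurations is at most $C N^2 l^2$. Because the relators are chosen independently and uniformly from the cyclically reduced words of length $l$, whose cardinality is comparable to $(2m-1)^l$, the probability that any prescribed configuration gives an honest equality of subwords is $O((2m-1)^{-k})$, uniformly. The expected number of violations is therefore $O\!\left(l^2 (2m-1)^{(2d - \lambda) l}\right)$, which tends to $0$ since $\lambda > 2d$, and Markov's inequality finishes this direction.

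For the negative assertion, fix $\lambda$ with $0 < \lambda < 2d$ and let $X$ be the number of quadruples $(i, j, p, q)$ with $i \neq j$ for which the length-$k$ subwords of $r_i, r_j$ starting at positions $p, q$ agree in $\langle S \rangle$. The same counting gives $\mathbb{E}[X] \asymp l^2 (2m-1)^{(2d - \lambda) l} \to \infty$. To pass from divergent expectation to ``$X > 0$ with high probability'', I would apply the Paley--Zygmund (second moment) method, estimating $\mathbb{E}[X^2]$ by grouping pairs of matching events according to how many relator indices they share: pairs that share no index are independent and contribute exactly $\mathbb{E}[X]^2$, while pairs sharing one or two indices give strictly lower-order corrections, yielding $\mathbb{E}[X^2] = (1 + o(1)) \mathbb{E}[X]^2$. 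Paley--Zygmund then forces $\Pr(X > 0) \to 1$, so $C'(\lambda)$ fails generically.

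The main technical obstacle is the variance estimate in the negative direction: when two prospective matches share a relator, their joint probability has to be analyzed by case analysis according to whether and how the two length-$k$ pieces inside the shared relator overlap, and the naive product bound $(2m-1)^{-2k}$ degenerates on overlapping pieces. These overlap contributions are controlled by the observation that a length-$l$ relator admits only $O(l)$ starting positions for a piece, so the overlapping configurations contribute at worst polynomial-in-$l$ corrections, which are absorbed by the exponential gap coming from the factor $(2m-1)^{dl}$ with $d > 0$.
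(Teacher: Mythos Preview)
The paper does not prove this proposition; it is stated with a citation to Gromov \cite[Section 9.B]{Gro-91-asymp-inv} and used as a black box. So there is no ``paper's own proof'' to compare against directly. That said, your first-moment/second-moment strategy is exactly the standard argument, and the paper carries out the first-moment half in full detail for the analogous few-relator statement (Proposition~\ref{prop-lambda-gen-upper}), so one can see what a completed version of your positive direction looks like there.

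Your sketch is essentially correct. Two remarks. First, in the positive direction you say ``possibly with $i=j$'' and then invoke independence of the relators; but when $i=j$ there is no independence, and the two occurrences of the length-$k$ subword may overlap. This case needs a separate (easy) argument: an overlap of shift $s<k$ forces the subword to be $s$-periodic, so the relevant stretch of $r_i$ is determined by $s$ letters, and one still gets probability $O((2m-1)^{-k})$ per configuration. The paper handles exactly this issue in Case~2 of Proposition~\ref{prop-lambda-gen-upper}. Once this is done, the self-piece contribution is $O\!\left(Nl^2(2m-1)^{-k}\right)$, which is dominated by the cross-piece term since $d<2d<\lambda$.

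Second, in the negative direction your variance bookkeeping is right in outline, but the phrase ``absorbed by the exponential gap coming from the factor $(2m-1)^{dl}$'' undersells what is happening when both indices are shared: there the crude bound $\sum_{p,q,p',q'}\Pr[A_{pq}\cap A_{p'q'}]\le l^4(2m-1)^{-k}$ gives a ratio to $\mathbb{E}[X]^2$ of order $(2m-1)^{(\lambda-2d)l}$, which tends to zero precisely because $\lambda<2d$, not merely because $d>0$. With that correction, Paley--Zygmund goes through as you say.
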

In particular, at densities $d<1/12$, a random group has a
$C'(1/6)$ small cancellation presentation.

Specifying a finite generating set $S$ for $G$ allows one to define the
Cayley graph $\Gamma=\Gamma(G,S)$.
The Cayley graph of a $C'(1/6)$ group is $\delta$-hyperbolic, with
$\delta$ equal to twice the maximum relator word length (Lemma~\ref{lem-thin-triangles}).

As a hyperbolic metric space, for any sufficiently small visual parameter $\eps>0$,
$\bdry \Gamma$ carries a visual metric comparable to $e^{-\eps(\cdot,\cdot)}$,
where $(\cdot,\cdot)$ denotes the Gromov product.
A simple upper bound on the conformal dimension of $\bdry G$ is given 
by the Hausdorff dimension of this metric space, which 
equals $h(G)/\eps$~\cite[Corollary 7.6]{Coo-93-meas-bdry}.
Here $h(G)$ is the volume entropy of the group (with respect to $S$).
In an $m$-generator group, we always have $h(G) \leq \log(2 m -1)$.
(This is essentially sharp for a random group \cite{Shu-99-rand-growth,Oll-06-rand-growth}.)
Thus,
\begin{equation}\label{eq-cdim-eps-entropy}
	\Cdim(\bdry G) \leq \frac{1}{\eps} h(G) \leq \frac{1}{\eps} \log(2m-1).
\end{equation}

To give a good upper bound for the conformal dimension, then, we would like to choose
$\eps$ as large as possible.  The standard estimate for an admissible
$\eps$ is $\eps \leq \log(2) / (4 \delta)$ \cite[III.H.3.21]{BH-99-Metric-spaces}.
In the few relator model, or the density model with $d < 1/12$, we can take $\delta = 2l$.
Ollivier shows that in the density model with $d<1/2$ we can take $\delta \leq 4l/(1-2d)$
\cite[Corollary 3]{Oll-07-sc-rand-group}.

Consequently, in the few relator model we have the generic estimate
$\Cdim(\bdry G) \leq (8 \log(2m-1)/\log(2)) \cdot l$.
The following result is also immediate, and worth noting.
\begin{proposition}
	A random group at density $d<1/2$ will satisfy
	\begin{equation*}
		\Cdim(\bdry G) \leq \frac{16 \log(2m-1)}{(\log 2)(1-2d)} \cdot l.
	\end{equation*}
\end{proposition}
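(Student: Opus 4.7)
The plan is to combine the three ingredients that have been assembled in this subsection: the volume entropy upper bound \eqref{eq-cdim-eps-entropy}, the standard estimate $\eps \leq \log(2)/(4\delta)$ for an admissible visual parameter from \cite[III.H.3.21]{BH-99-Metric-spaces}, and Ollivier's bound $\delta \leq 4l/(1-2d)$ on the hyperbolicity constant of the Cayley graph of a random group at density $d < 1/2$ \cite[Corollary 3]{Oll-07-sc-rand-group}. All three of these hold generically in the density model, so they can be invoked simultaneously with probability tending to $1$ as $l \ra \infty$.

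First I would fix a random presentation satisfying Ollivier's bound, giving a $\delta$-hyperbolic Cayley graph with $\delta \leq 4l/(1-2d)$. Next I would set $\eps := \log(2)/(4\delta)$, which is then an admissible visual parameter, and note that with this choice
\begin{equation*}
    \frac{1}{\eps} = \frac{4\delta}{\log 2} \leq \frac{16 l}{(\log 2)(1-2d)}.
\end{equation*}
Finally, substituting this into \eqref{eq-cdim-eps-entropy} together with the universal bound $h(G) \leq \log(2m-1)$ yields
\begin{equation*}
    \Cdim(\bdry G) \leq \frac{1}{\eps} h(G) \leq \frac{16 \log(2m-1)}{(\log 2)(1-2d)} \cdot l,
\end{equation*}
which is exactly the claimed inequality.

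There is no serious obstacle here, since all the hard work has been done in the cited results; the proposition is essentially just assembling Ollivier's linear-in-$l$ hyperbolicity estimate with the standard visual-parameter/entropy upper bound on conformal dimension. The only point worth checking carefully is that the genericity statements from \eqref{eq-cdim-eps-entropy} (which in the density model relies on generic one-endedness and hyperbolicity) and from Ollivier's theorem hold simultaneously with high probability, but this is automatic since each holds with probability tending to $1$ as $l \ra \infty$.
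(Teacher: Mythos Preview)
Your proposal is correct and follows exactly the same route as the paper: the proposition is stated there as ``immediate'' from combining \eqref{eq-cdim-eps-entropy}, the standard admissible-parameter estimate $\eps \leq \log(2)/(4\delta)$, and Ollivier's bound $\delta \leq 4l/(1-2d)$, which is precisely what you do.
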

(Theorem~\ref{thm-main-density} gives a sharper upper bound for small $d$.)

For $C'(1/6)$ groups, we cannot find a significantly better estimate for $\delta$, 
since the relators of size $l$ give bigons
with sides separated by a distance of order $l$.
However, work of Bonk and Foertsch~\cite{BF-06-ACu-kappa} lets us find a better estimate for $\eps$ using the concept
of ``asymptotic upper curvature'' (Section~\ref{sec-ac-upper-cdim}).
\begin{varthm}[Theorem \ref{thm-sc-cdim-upper}.]
	If $G = \langle S | R \rangle$ is a $C'(\lambda)$ presentation of a group, with
	$\lambda \leq 1/6$, and $|r| \leq M$ for all $r \in R$, then
	\[
		\Cdim(\partial_\infty(G))
			\leq \frac{M}{2 \log \lfloor\frac{1}{\lambda}-4 \rfloor} \log(2m-1).
	\]
\end{varthm}
Combining this theorem with Propositions~\ref{prop-lambda-gen-upper}~and~\ref{prop-dens-sc},
we obtain the upper bounds in Theorems~\ref{thm-main-few-rel}~and~\ref{thm-main-density}.
 
It is more difficult to obtain lower bounds for the conformal dimension.
A key inspiration for our work is the following result of Champetier.
\begin{theorem}[{\cite[Theorem 4.18]{Cha-95-rand-grps}}]\label{thm-champ-twelth-menger}
	Suppose $G = \langle S | R \rangle$ is a $C'(1/12)$ presentation, 
	with $|S| \geq 2$ and $|R| \geq 1$.
	Suppose further that 
	every reduced word $u \in \langle S \rangle$ of length $12$
	appears at least once in some cyclic conjugate of some $r^{\pm 1}, r \in R$.
	Then $\bdry G$ is homeomorphic to the Menger curve.
\end{theorem}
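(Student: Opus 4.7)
The plan is to apply the topological characterization of the Menger curve due to Anderson: a compact metric space is homeomorphic to the Menger curve if and only if it is connected, locally connected, one-dimensional, has no local cut points, and contains no non-empty open subset embeddable in the plane. I would verify each of these five conditions for $\bdry G$ in turn.

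The first block is relatively standard for hyperbolic group boundaries. Since $C'(1/12)$ implies $C'(1/6)$, the group $G$ is word-hyperbolic, so $\bdry G$ is automatically compact and finite-dimensional. The hypotheses $|S|\geq 2$, $|R|\geq 1$, together with the saturation condition, should rule out free splittings of $G$ and force one-endedness; by Bestvina--Mess, $\bdry G$ is then connected and locally connected. Topological one-dimensionality follows from the fact that $\bdry G$ is the Gromov boundary of a hyperbolic $2$-complex (the universal cover of the presentation $2$-complex): such a boundary has topological dimension at most $1$, and connectedness promotes this to exactly $1$.

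For the absence of local cut points, I would invoke Bowditch's criterion relating local cut points of $\bdry G$ to splittings of $G$ over two-ended subgroups. A splitting over an infinite cyclic subgroup $\langle g \rangle$ would impose strong combinatorial constraints on the relators, forcing them to be compatible with a Bass--Serre decomposition centred on $\langle g \rangle$; the saturation hypothesis guarantees enough heterogeneity in the relators to rule this out, since one can exhibit length-$12$ subwords whose occurrences cannot be reconciled with such a decomposition.

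The hard part will be the ``no planar open subset'' condition. The plan is to show that every point $\xi \in \bdry G$ admits arbitrarily small neighbourhoods $U$ containing a topologically embedded copy of the non-planar graph $K_{3,3}$. Fixing a vertex $g$ in the Cayley graph whose shadow at infinity lies in $U$, the saturation hypothesis provides enough distinct cyclic conjugates of relators passing near $g$ to generate many distinct local branching directions; choosing six such directions and extending to geodesic rays produces six distinct boundary points in $U$, while local connectedness supplies arcs joining the prescribed pairs. The $C'(1/12)$ condition is used twice: first, to ensure that distinct branches yield distinct boundary points (sides of a given relator are bent away from each other), and second, to ensure that the nine connecting arcs realising the $K_{3,3}$ pattern can be chosen mutually disjoint. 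The main obstacle is this simultaneous geometric control --- coordinating six rays and nine arcs while keeping everything inside $U$ --- where the specific constants $\lambda = 1/12$ and saturation length $12$ appear to be calibrated precisely to make the construction go through.
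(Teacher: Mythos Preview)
This theorem is not proved in the present paper. It is quoted verbatim from Champetier \cite[Theorem 4.18]{Cha-95-rand-grps} and serves as motivation and inspiration for the paper's own construction (the round-tree argument of Sections~\ref{sec-round-tree}--\ref{sec-lower-cdim}); the paper itself supplies no proof of it. There is therefore nothing here to compare your proposal against.

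If you want a brief assessment of the proposal on its own merits: the overall strategy via Anderson's characterization is indeed Champetier's route. Two remarks. First, your plan to exclude local cut points by invoking Bowditch's splitting theorem is anachronistic --- Bowditch's results postdate Champetier's 1995 paper, so Champetier necessarily argues this step by hand in the Cayley complex. Second, your sketch of the non-planarity step (producing an embedded $K_{3,3}$ near every boundary point) is in the right spirit, but the sentence ``the $C'(1/12)$ condition \ldots\ ensures that the nine connecting arcs \ldots\ can be chosen mutually disjoint'' is doing essentially all the work and is not yet an argument; this is exactly where Champetier's proof is delicate, and your proposal does not yet indicate concretely how the constants $1/12$ and $12$ make the disjointness go through.
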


Random groups certainly contain every word of length $12$ as a subword of some relator.
In fact, generic few relator presentations contain every word of length
$C \log(l)$ as a subword of some relator (Proposition~\ref{prop-few-rel-wordlength}),
while generic presentations at density $d$ contain every word of length $C l$, for $C<d$, as
a subword of some relator (Proposition~\ref{prop-dens-wordlength}).

Champetier builds a cone in the Cayley complex of a $C'(1/12)$
group that gives an arc in its boundary.  
We strengthen his techniques slightly to $C'(1/8 - \delta)$ groups,
and produce instead a sub-complex
quasi-isometric to one of Gromov's ``round trees'' \cite{Gro-91-asymp-inv, Bou-95-cdim-hyp-build}
This gives a Cantor
set of curves in the boundary, to which we apply a lemma of Pansu and Bourdon.
We find the following lower bound for the conformal dimension of a group in terms of
simple algebraic properties of its presentation.
\begin{varthm}[Theorem \ref{thm-sc-cdim-lower}.]
	Suppose $G = \langle S | R \rangle$ is a $C'(1/8 -\delta)$ presentation, 
	with $|S| = m \geq 2$ and $|R| \geq 1$, where $\delta \in (0, 1/8)$
	and $|r| \in [3/\delta, M]$ for all $r \in R$.
	Suppose further that for some $M^* \geq 12$, 
	every reduced word $u \in \langle S \rangle$ of length $M^*$
	appears at least once in some cyclic conjugate of some relator $r^{\pm 1}, r \in R$.
	Then for some universal constant $C>0$, we have
	\[
		\Cdim(\bdry G) \geq 1 + C\log(2m) \cdot \frac{M^*}{\log(M)}.
	\]
	(If we have a $C'(1/11)$ presentation, the lower bound on
	the lengths of relators holds automatically.)
\end{varthm}

This theorem combines with Proposition~\ref{prop-few-rel-wordlength} and \ref{prop-dens-wordlength}
to complete the proof of Theorems~\ref{thm-main-few-rel} and \ref{thm-main-density}.

Random groups in the density model have better small cancellation
properties than their optimal $C'(\lambda)$ condition would lead you to expect
(see Ollivier~\cite{Oll-07-sc-rand-group} and 
Ollivier and Wise~\cite{Oll-Wis-11-rand-grp-T}).
Using results from these papers, the author has extended Theorems~\ref{thm-main-density} and \ref{thm-sc-cdim-lower}
to densities $d<1/13$.
It is reasonable to expect that similar techniques to ours may be used to find a good lower bound for the
conformal dimension of a random group at densities up to, say, $d<1/6$,
however entirely different techniques would be needed above $d>1/4$, as at these densities random groups
have no good small cancellation properties at all.

\subsection{Outline of paper}

In Section~\ref{sec-rnd-grp-sc} we consider random groups in both models
and their small cancellation properties.
Standard results about the geometry of $C'(1/6)$ groups, including hyperbolicity,
are given in Section~\ref{sec-sc-cayley}.

Asymptotic upper curvature bounds are used in Section~\ref{sec-ac-upper-cdim}
to give a generic upper bound for conformal dimension.
A round tree sub-complex is built in Section~\ref{sec-round-tree}, and
the proof of Theorem~\ref{thm-sc-cdim-lower} is completed in
Section~\ref{sec-lower-cdim}.

\subsection{Acknowledgments}

I would like to thank Ilya Kapovich for introducing me to some of the questions considered
in this paper.  
I also thank Piotr Przytycki for interesting conversations,
and the referee for many helpful suggestions.


\section{Random groups and small cancellation}\label{sec-rnd-grp-sc}

Our goal in this section is to study subwords of random groups in
the few relator model and density model.  
We find out what lengths subwords should be to be unique in the presentation,
or, on the other hand, so that every possible subword of that length appears.
These calculations are fairly routine, with some small technicalities from working with
cyclically reduced words as opposed to just reduced words.

We recall the definition of the metric small cancellation condition~\cite{Lyndon-Schupp-small-canc}.

\begin{definition}\label{def-piece-sc}
	The presentation $G = \langle S \mid R \rangle$ satisfies the metric
	small cancellation condition $C'(\lambda)$, for some $0<\lambda<1$,
	if every piece $u$ which is a subword of some cyclic conjugate of $r^{\pm 1}$,
	$r \in R$, satisfies $|u| < \lambda|r|$.
	A \emph{piece} is a common initial segment of two
	distinct cyclic conjugates of $r_1, r_2 \in R \cup R^{-1}$,
	where $r_1$ may equal $r_2$.
\end{definition}
%
%
%

\subsection{Small cancellation in the few relator model}
We have $m \geq 2, n \geq 1$ fixed.
Our goal in this subsection is to show that generic few relator presentations
satisfy strong small cancellation properties.

\begin{proposition}\label{prop-lambda-gen-upper}
	There exists $0 < C_0 < \infty$, depending only on $m$, so that 
	generic few relator presentations are $C'(\lambda_0(l))$,
	where $\lambda_0(l) = C_0 \frac{\log l}{l}$.
	In fact, we can take $C_0 = 11/\log(2m-1)$.
\end{proposition}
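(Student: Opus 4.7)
The plan is to fix an auxiliary integer threshold $k = k(l) := \lceil 10 \log(l) / \log(2m-1) \rceil$ and establish, generically, two facts: (a) every relator $r_i$ satisfies $|r_i| \geq l - \log^2(l)$, and (b) no two distinct cyclic conjugates of relators from $R \cup R^{-1}$ share a common initial segment of length $\geq k$. Combining (a) and (b), any piece $u$ in any relator $r$ then satisfies, for $l$ large,
\[
|u| \leq k-1 < \frac{11 \log(l)}{\log(2m-1)} \cdot \Bigl(1 - \frac{\log^2(l)}{l}\Bigr) \leq \lambda_0(l) \, |r|,
\]
which is exactly the $C'(\lambda_0(l))$ condition with $C_0 = 11/\log(2m-1)$.

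For (a), I would use the standard fact that the number $N_m(j)$ of cyclically reduced words of length $j$ in $F_m$ is $\asymp (2m-1)^j$, and that $\sum_{j\leq l} N_m(j) \asymp (2m-1)^l$. Hence the probability that a single uniformly drawn relator has length at most $l - t$ is $O((2m-1)^{-t})$. Choosing $t = \log^2(l)$ makes this smaller than any negative power of $l$, and a union bound over the $n$ (fixed) relators gives (a).

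For (b), I would estimate the expected number of candidate pieces of length exactly $k$. For each ordered pair $(r_i, r_j)$ (possibly with $i = j$), there are $O(l)$ subwords of length $k$ in the cyclic conjugates of $r_i^{\pm 1}$; for any fixed word $w$ of length $k$, the probability that $w$ appears as a subword of some cyclic conjugate of $r_j^{\pm 1}$ is $O(l \cdot (2m-1)^{-k})$, using the uniform model on cyclically reduced words. Summing over the $O(n^2)$ ordered pairs yields an expected total of
\[
O\!\left( n^2 \cdot l^2 \cdot (2m-1)^{-k} \right) = O(n^2 / l^{8}),
\]
which tends to $0$; Markov's inequality gives (b). Properly speaking one also must exclude the trivial overlaps that are not genuine pieces (i.e., identical cyclic conjugates of the same relator), but these contribute a lower-order term.

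The main obstacle is purely bookkeeping in step (b): one must handle cyclic conjugation and inversion without double counting, and must verify that restricting to \emph{cyclically} reduced (rather than merely reduced) words only perturbs subword-occurrence probabilities by an absolute multiplicative constant that can be absorbed into the $O(\cdot)$ bound. The constant $11$ is chosen so as to simultaneously absorb both the $l^2$ factor in the expected-piece count (which forces the exponent on $(2m-1)^{-k}$ to exceed $2$) and the small $\log^2(l)/l$ deficit in the relator lengths coming from step (a); any constant strictly greater than $2/\log(2m-1)$ plus a margin would do, but $11$ is a clean overkill.
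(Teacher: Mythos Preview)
Your overall plan---a first-moment/union-bound estimate on the number of length-$k$ matching subwords---is exactly the paper's strategy, and your step~(a) is essentially the paper's reduction to relators of length $\geq 0.99l$. The difference, and the gap in your proposal, is the case $i=j$ with \emph{overlapping} positions. Your sentence ``for any fixed word $w$ of length $k$, the probability that $w$ appears as a subword of some cyclic conjugate of $r_j^{\pm1}$ is $O(l\,(2m-1)^{-k})$'' is true, but it only yields the expected-count bound when $r_i$ and $r_j$ are independent. When $i=j$ the subword at position $p$ is not a fixed word but part of the same random relator, so conditioning on it changes the distribution at nearby positions; the ``fixed $w$'' argument does not apply. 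You flag cyclic conjugation and the cyclically-reduced constraint as the only bookkeeping, but the self-overlap is the genuine technical point, not bookkeeping.

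The paper treats this explicitly as its Case~2: it proves a small combinatorial lemma extracting from the overlapping piece $u$ a subword $v$ of length $\geq \tfrac15|u|$ that occurs in two \emph{disjoint} locations in $r_i$ (or as $v$ and $v^{-1}$), and then runs the independent-blocks count on $v$. This factor-of-$5$ loss is exactly what forces $C_0>10/\log(2m-1)$ and hence the choice $C_0=11/\log(2m-1)$; your explanation that $11$ merely absorbs the $l^2$ factor and the $\log^2(l)/l$ deficit is not the actual reason. If instead you handle the overlap directly---noting that a same-orientation match with shift $d<k$ forces a periodic block of length $d+k$, still an event of probability $O((2m-1)^{-k})$, while an opposite-orientation overlap is impossible in a reduced word---your argument goes through and in fact yields any $C_0>2/\log(2m-1)$, sharper than the paper's constant.
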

This result is essentially sharp, as shown by Proposition~\ref{prop-few-rel-wordlength}.

We begin with some preliminary observations.
In the following, the notation $A \asymp B$ indicates that $A \lesssim B \lesssim A$.

Let $N_l$ be the number of cyclically reduced words of length $l$ in $F_m$.
It is easy to see that $N_l \asymp (2m-1)^l$, with multiplicative error of $\frac{4}{3}$.
More precise estimates are in Subsection~\ref{ssec-cyc-reduced} below.
%
Let $N_{\leq l}$ be the number of cyclically reduced words of length at most $l$ in $F_m$.
Again, $N_{\leq l} \asymp (2m-1)^l$.
The number of presentations where all relators have length
at most $l$ is $N_{\leq l}^n = (N_{\leq l})^n$.  

Let $N_{[0.99l,l]}^n$ be the number of presentations where all $n$ relators
have length at least $0.99l$, but no more than $l$.  This is generic, since
\begin{align*}
	\frac{N_{\leq l}^n - N_{[0.99l,l]}^n}{N_{\leq l}^n} 
		\leq \frac{n \cdot N_{\leq 0.99l} \cdot N_{\leq l}^{n-1} }{N_{\leq l}^n} 
		\lesssim (2m-1)^{-0.01l},
\end{align*}
which goes to zero as $l \ra \infty$.

So to show that a property is generic, it suffices to show that it is generic within the
class of presentations where all relators have lengths between $0.99l$ and $l$.
\begin{proof}[Proof of Proposition~\ref{prop-lambda-gen-upper}]
	Let $N_{(l_i)}$ be the number of presentations with cyclically reduced 
	relators of length $|r_i| = l_i$, $i=1,\ldots,n$,
	and let $N_{(l_i),\lambda_0}^c$ be the number of those which are not $C'(\lambda_0)$.	
	It suffices to find an $o(1)$ bound for $N_{(l_i),\lambda_0}^c / N_{(l_i)}$,
	when the relators have lengths $l_1, \ldots, l_n$ in $[0.99l,l]$.
	
	If we fail to be $C'(\lambda_0)$, then there is a word $u$ of length 
	equal to $\lceil 0.99l\lambda_0 \rceil$ which appears
	in two distinct places in the words $r_1, \ldots, r_n$, or their inverses.
		
	\vspace{2mm}
	{\noindent\textbf{Case 1:}} The word $u$ appears in two different words.
	
	There are $\binom{2n}{2} \leq 4n^2$ choices for the words $r_i^{\pm 1}$ and $r_{i'}^{\pm 1}$.
	Given this choice, the number of ways $u$ can appear is bounded from above by
	the product of the number of choices of 
	(1) the location of $u$ in these words, 
	(2) the word $u$, 
	(3) the remainder of the words $r_i$ and $r_{i'}$,
	and (4) the other words.  
	Call these numbers $A_1, A_2, A_3$ and $A_4$ respectively. Clearly,
	\begin{gather*}
		A_1 \leq l^2, \quad
		A_2 \leq \frac{4}{3} (2m-1)^{|u|}, \quad \\
		A_3 \leq (2m-1)^{l_i-|u|} \cdot (2m-1)^{l_{i'}-|u|}, \quad \text{and} \quad
		A_4 = \prod_{j \neq i,i'} N_{l_j}.
	\end{gather*}
	Since we have
	\begin{align*}
		\frac{A_1 A_2 A_3 A_4}
					{ \prod_{j = 1,\ldots,n} N_{l_j} }
			& \lesssim \frac{l^2 (2m-1)^{|u|}(2m-1)^{l_i-|u|}(2m-1)^{l_{i'}-|u|}} 
					{ N_{l_i} \cdot N_{l_{i'}} } \\
			& \lesssim l^2 (2m-1)^{-|u|},
	\end{align*}
	Case 1 occurs with probability $P_1$ at most
	$ P_1  \lesssim n^2 l^2 (2m-1)^{-|u|} $.
	Observe that $-|u| \leq -0.99 C_0 \log(l)$, and $l^2 = (2m-1)^{2\log(l)/\log(2m-1)}$.
	Therefore, provided $2-0.99 C_0 \log(2m-1) < 0$,
	the probability $P_1$ will go to zero as $l$ goes to infinity.
	
	\vspace{2mm}
	{\noindent\textbf{Case 2:}} The word $u$ appears in the same word $r_i$ in two distinct ways.
	
	Let $P_2$ be the probability this occurs among presentations of lengths
	$(l_i)$.
	
	\begin{lemma}
		There is a subword $v$ of $u$, of length at least $0.2C_0 \log(l)$,
	which appears in $r_i$ in two non-intersecting locations as either $v$ or $v^{-1}$.
	\end{lemma}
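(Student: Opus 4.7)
The plan is to split into cases based on two features: the relative orientations of the two occurrences of $u$ inside $r_i$, and the cyclic distance $d$ between their starting positions. Regarding $r_i$ as a cyclic word, I label the two occurrences by a (starting position, direction) pair; the two pairs are distinct by assumption.

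First I would dispose of the opposite-orientation case, where one occurrence reads as $u$ and the other as $u^{-1}$. If the two occurrences overlap in even a single letter, then aligning them produces, by a short parity argument on the overlap length, either an index $k$ with $r_i[k] = r_i[k]^{-1}$ (impossible, since no generator in a free group is its own inverse) or two adjacent indices $k, k+1$ with $r_i[k] = r_i[k+1]^{-1}$ (contradicting that $r_i$ is reduced). Hence the two occurrences are disjoint, and $v := u$ already satisfies the lemma, since $|u| = \lceil 0.99 l \lambda_0 \rceil \geq 0.99 C_0 \log(l) \geq 0.2 C_0 \log(l)$.

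Now assume both occurrences are in the same orientation, at starting positions $p_1$ and $p_2 = p_1 + d$ with $d \geq 1$. If $d \geq 0.2 C_0 \log(l)$, I take $v$ to be the prefix of $u$ of length $\lceil 0.2 C_0 \log(l) \rceil$; it appears at both $p_1$ and $p_1 + d$, and these copies are non-intersecting because their separation $d$ is at least $|v|$. If instead $d < 0.2 C_0 \log(l)$, then $d < |u|$, the two copies of $u$ overlap, and matching the letters in the overlap forces $u$ to have period $d$. Writing $w$ for the prefix of $u$ of length $d$ and $k = \lfloor |u|/d \rfloor$, the union of the two occurrences is a substring of $r_i$ of the form $w^{k+1}$ (up to a partial suffix). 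I would then set $v := w^{\lfloor(k+1)/2\rfloor}$; its two copies at positions $p_1$ and $p_1 + \lfloor(k+1)/2\rfloor \cdot d$ lie inside this periodic block and are adjacent but non-overlapping, and a routine estimate gives $|v| \geq (|u| - d)/2 \geq 0.2 C_0 \log(l)$.

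The main obstacle is the opposite-orientation step: one has to extract the correct parity-based identification from the hypothetical overlap so as to trigger the reducedness of $r_i$, and this is the only place where cyclic reducedness really enters. Once that step is in hand, the same-orientation case is a clean split into a ``wide gap'' and a ``periodicity'' subcase, in each of which the required subword $v$ is produced directly.
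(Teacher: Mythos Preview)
Your proof is correct and reaches the same conclusion as the paper, but the case analysis is organised differently. The paper first asks whether the initial segment of $u_1$ of length $\lceil 0.2C_0\log(l)\rceil$ is disjoint from $u_2$, and only afterwards splits by orientation inside the overlapping case; you split by orientation first.

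The substantive difference is in the opposite-orientation case. The paper allows the two arcs to overlap and then extracts $v$ as the initial segment of $u_1$, locating a disjoint copy of $v^{-1}$ in the tail of $u_2$. Your argument is sharper: any overlap forces the overlap interval to satisfy $r_i[k]\,r_i[c-k]=1$ for every position $k$ in it (where $c$ is the sum of the two endpoints), and the centre of this ``anti-palindrome'' yields either a letter equal to its own inverse or a cancelling adjacent pair, contradicting reducedness. Hence the two arcs are disjoint and $v=u$ suffices. This is a genuine simplification that the paper does not use.

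In the same-orientation case your argument is essentially the paper's periodicity argument, lightly repackaged: the dichotomy ``$d\geq 0.2C_0\log(l)$'' versus ``$d<0.2C_0\log(l)$'' matches the paper's ``initial segment of $u_1$ disjoint from $u_2$'' versus ``not'', and your choice $v=w^{\lfloor(k+1)/2\rfloor}$ is a harmless variant of the paper's $v=w^k$ (for a differently-defined $k$). One small point you are using implicitly and might make explicit: since $2|u|<l_i$ for large $l$, two arcs of length $|u|$ on the circle of length $l_i$ meet in at most one interval, which is needed both for the parity argument and for treating $d$ as a single shift.
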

	\begin{proof}
		Consider $r_i$ as a labelling on the oriented circle.
		Let $u_1$ and $u_2$ be the two words on the boundary $r_i$ so that each is 
		labelled by $u$ or $u^{-1}$.
	
		If the initial segment of $u_1$ of length $\lceil 0.2 C_0 \log(l) \rceil$
		does not intersect $u_2$, then let $v$ be that subword, and we are done.
		
		Otherwise, up to relabelling $u_1$ and $u_2$, 
		we can assume that the initial letter of $u_1$ is not in $u_2$
		but that the initial segment of $u_1$ of length $\lceil 0.2 C_0 \log(l) \rceil$
		does meet $u_2$.
		
		If the word $u$ has opposite orientations in $u_1$ and $u_2$, we let $v$ be the initial
		segment of $u_1$ of length $\lceil 0.2 C_0 \log(l) \rceil$. 
		Then $v^{-1}$ also appears in the tail segment of $u_2$, disjoint from $v$.
		
		Finally, if $u$ has the same orientation in both $u_1$ and $u_2$, let $w$ be the initial
		segment of $u_1$ disjoint from $u_2$, of length at most $0.2 C_0 \log(l)$.
		Since the words $u_1$ and $u_2$ are both copies of $u$, $u$ is made up of repeated
		copies of $w$ followed by some tail $w'$.
		We write $u = w^{2k} w'$, for some integer $k$, and word $w'$ of length 
		$|w'| < 2 \lceil 0.2 C_0\log(l) \rceil$, thus
		$|w^k| \geq 0.2 C_0 \log(l)$, so $v=w^k$ is our required word.  
		(In some of these estimates we assumed that $l$ was sufficiently large.)		
	\end{proof}
	
	We can now find, analogous to Case 1, that
	$P_2$, is bounded from above by
	the product of the number of choices of $i$,
	the locations of $v$ in this word, 
	the word $v$, 
	the remainder of the word $r_i$,
	all divided by $N_{l_i}$.  Therefore
	\begin{equation*}
		P_2  \lesssim \frac{n \cdot l^2 \cdot (2m-1)^{|v|} \cdot (2m-1)^{l_i-2|v|}}
			{N_{l_i}}
			 \lesssim  l^2 (2m-1)^{- |v|}.
	\end{equation*}
	Now $-|v| \leq -0.2 C_0 \log(l)$, so provided
	$2-0.2 C_0 \log(2m-1) < 0$,
	the probability $P_2$ will go to zero as $l$ goes to infinity.
	
	\vspace{2mm}
	{\noindent\textbf{Combining the cases:}}
	
	We have shown that $N_{(l_i),\lambda_0}^c/N_{(l_i)} \leq P_1 + P_2$
	goes to zero as $l \ra \infty$, independent of the choice of $l_i$ between $0.99l$ and $l$,
	provided that $C_0$ is sufficiently large.  It suffices to take
	$
		C_0 = 11/\log(2m-1).
	$
\end{proof}

\subsection{Counting cyclically reduced words}\label{ssec-cyc-reduced}
In this subsection we give some lemmas we will use in the remainder of this section.
We will need the following lemma which counts the number of ways to fill in a cyclically reduced word.
\begin{lemma}\label{lem-cyc-red-word-count}
	We count all reduced words $w$ of length $n+2$
	with first and last letter fixed in the free group $\langle s_1, s_2, \ldots, s_m \rangle$.
	
	There are essentially three different cases.
	Let $p_n$, $q_n$ and $r_n$ count the number of reduced words of length $n+2$ of the forms
	$s_1 u s_1$, $s_1 u s_1^{-1}$ and
	$s_1 u s_2$, respectively.
	Then, for all $n \geq 1$, we have:
	\[
		\frac{\max\{p_n,q_n,r_n\}}{\min\{p_n,q_n,r_n\}} \leq 
			1+ \frac{2}{(2m-1)^n}.
	\]
\end{lemma}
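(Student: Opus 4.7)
The plan is to set up a transfer-matrix recursion on these three quantities and exploit the symmetry among the $2m-2$ letters in $\{s_2^{\pm 1}, \ldots, s_m^{\pm 1}\}$, all of which are interchangeable as far as the counts are concerned.

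First I would build the recursion by appending one letter at the end: a reduced word of length $n+3$ starting with $s_1$ and ending in a letter $y$ corresponds uniquely to a reduced word of length $n+2$ starting with $s_1$ whose last letter is not $y^{-1}$. Grouping the $2m-2$ ``other'' endings together via symmetry gives
\begin{align*}
p_{n+1} &= p_n + (2m-2)\, r_n, \\
q_{n+1} &= q_n + (2m-2)\, r_n, \\
r_{n+1} &= p_n + q_n + (2m-3)\, r_n,
\end{align*}
with base values $p_1 = 2m-1$ and $q_1 = r_1 = 2m-2$.

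Next I would pass to the differences $D_n := p_n - r_n$ and $E_n := q_n - r_n$. Subtracting the first two equations immediately gives $p_n - q_n = 1$ for every $n$, while the full system collapses to $D_{n+1} = -E_n$ and $E_{n+1} = -D_n$. Hence $D_n, E_n$ are $2$-periodic with values in $\{-1,0,1\}$, determined by $D_1 = 1, E_1 = 0$. In particular, for each $n \geq 1$ two of the three quantities $p_n, q_n, r_n$ coincide and the third differs from them by exactly $1$, so the maximum and the minimum differ by $1$.

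Finally, I would combine this with the total-count identity $p_n + q_n + (2m-2)\, r_n = (2m-1)^{n+1}$ (the number of reduced words of length $n+2$ starting with $s_1$) to solve explicitly for $r_n$, obtaining a closed form of shape $r_n = \bigl((2m-1)^{n+1} \pm 1\bigr)/(2m)$. The ratio bound then reduces to $1 + 1/\min\{p_n,q_n,r_n\}$, and a direct comparison $2m/((2m-1)^{n+1}-c) \leq 2/(2m-1)^n$ (valid for $m \geq 2$ and the small integer corrections $c$ arising from the $\pm 1$ terms) gives the claimed estimate. The only mild obstacle is bookkeeping the parity-dependent sign of $D_n$ and $E_n$; the rest is elementary linear algebra.
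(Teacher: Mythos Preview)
Your proposal is correct and follows essentially the same route as the paper: the same three-term recursion with the same base values, the observation that two of $p_n,q_n,r_n$ coincide and the third differs by exactly $1$ (the paper states this directly by induction, you phrase it via the differences $D_n,E_n$), and then a closed form for the minimum $q_n$ of size roughly $(2m-1)^{n+1}/(2m)$ to bound $1+1/q_n$. The only cosmetic difference is that the paper solves the recurrence for $q_n$ directly, whereas you extract it from the total-count identity $p_n+q_n+(2m-2)r_n=(2m-1)^{n+1}$; note that for even $n$ the correction $c$ in your final inequality is $2m-1$ rather than $1$, but the comparison $c\le (m-1)(2m-1)^n$ still holds for $m\ge 2$, so the estimate goes through.
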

\begin{proof}
	Note that $p_1 = 2m-1$, and $q_1=r_1=2m-2$.
	Clearly,
	\begin{align*}
		p_n &= p_{n-1}+(2m-2)r_{n-1}, \\
		q_n &= q_{n-1}+(2m-2)r_{n-1}, \ \text{and} \\
		r_n &= p_{n-1}+q_{n-1}+(2m-3)r_{n-1}. 
	\end{align*}
	One observes that, by induction,
	when $n$ is odd, $p_n = q_n +1$ and $r_n = q_n$,
	while when $n$ is even, $p_n = r_n = q_n + 1$.
	
	A simple recurrence relation calculation gives that
	\begin{equation*}
		q_n = 
		\begin{cases}
			\frac{1}{2m} \big( (2m-1)^{n+1} - 1 \big) & \text{if $n$ is odd},\\
			\frac{1}{2m} \big( (2m-1)^{n+1} - (2m-1) \big) & \text{if $n$ is even}.
		\end{cases}
	\end{equation*}
	Therefore
	\begin{gather*}
		q_n \geq \frac{2m-1}{2m} \big( (2m-1)^n - 1 \big) \geq \frac{1}{2} (2m-1) ^n, 
	\end{gather*}
	and
	\[
		\frac{\max\{p_n,q_n,r_n\}}{\min\{p_n,q_n,r_n\}}
		= \frac{q_n+1}{q_n}
		= 1 + \frac{1}{q_n}
		\leq 1 + \frac{2}{(2m-1)^n}. \qedhere
	\]
\end{proof}
This proof implies that $N_l = 2mp_{l-1} \asymp (2m-1)^l$.

The following lemma estimates the probability of omitting a specified word.
\begin{lemma}\label{lem-cyc-omit-gl-word}
	Fix a reduced word $r_0$ of length $g(l) < l/4$, $g(l) > 4$.
	Let $N_{r_0}$ be the number of all cyclically reduced words of length
	$l$ which omit $r_0$.
	Then the proportion $N_{r_0}/N_l$ is at most
	\[
		\frac{N_{r_0}}{N_l} \leq 
			\exp \left(\frac{2}{(2m-1)^{(l/2)-1}} - 
			\frac{l}{9 g(l) (2m-1)^{g(l)}}
			\right).
	\]
\end{lemma}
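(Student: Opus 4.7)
My plan is to partition the cyclic word of length $l$ into $k = \lfloor l/(3g(l)) \rfloor$ disjoint arcs $B_1, \ldots, B_k$, each of length at least $3g(l)$, and to bound the probability that $r_0$ fails to appear in any arc by a conditional-independence argument. Chaining the per-arc bounds via the Markov property of reduced words will yield the main exponential term $\exp(-l/(9g(l)(2m-1)^{g(l)}))$, while a careful application of Lemma~\ref{lem-cyc-red-word-count} to pass from reduced to cyclically reduced counts will produce the small multiplicative correction $\exp(2/(2m-1)^{l/2-1})$.

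Within each arc $B_j$, I would mark two length-$g(l)$ candidate ``slots,'' namely $S_j^{(1)}$ at the start of $B_j$ and $S_j^{(2)}$ shifted by one position. Conditioning on the letter $\sigma$ at the position immediately before $B_j$, the reduced-word Markov chain gives $P(S_j^{(1)} = r_0 \mid \sigma) = (2m-1)^{-g(l)}$ whenever $\sigma$ is not the inverse of the first letter of $r_0$; in the adversarial case where $\sigma$ \emph{is} that inverse, a short computation (averaging over the $2m-1$ possible first letters of $B_j$, exactly one of which would prevent $S_j^{(2)}$ from starting correctly) gives $P(S_j^{(2)} = r_0 \mid \sigma) = (2m-2)/(2m-1)^{g(l)+1}$. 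In either case, for $m \geq 2$,
\[
  P\bigl(r_0 \text{ occurs in } B_j \bigm| \sigma\bigr) \geq \frac{1}{2(2m-1)^{g(l)}}
\]
uniformly in $\sigma$. By the Markov property the content of $B_j$ given its left-boundary letter is independent of the earlier arcs, so chaining these bounds yields
\[
  P(r_0 \text{ is avoided in every arc}) \leq \left(1 - \frac{1}{2(2m-1)^{g(l)}}\right)^k \leq \exp\!\left(-\frac{l}{9 g(l)(2m-1)^{g(l)}}\right),
\]
using $k \geq l/(3g(l)) - 1$, $1-x \leq e^{-x}$, and the assumption $g(l) > 4$.

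The residual factor $\exp(2/(2m-1)^{l/2-1})$ arises when converting the preceding estimate, which is natural for reduced words, to one for cyclically reduced words. I would decompose the cyclic word into two halves of length about $l/2 + 1$, regard each as a reduced word with prescribed first and last letters, and apply the ratio estimate of Lemma~\ref{lem-cyc-red-word-count} with $n = l/2 - 1$: this bounds the discrepancy between the cyclically reduced count and its ``decoupled'' product by a factor $1 + 2/(2m-1)^{l/2-1} \leq \exp(2/(2m-1)^{l/2-1})$. The principal obstacle in the proof is the uniform conditional matching bound in the adversarial case $\sigma = (r_0)_1^{-1}$, where the natural slot $S_j^{(1)}$ cannot match $r_0$ at all; the two-slot shift trick above is what makes this step go through, at the cost of a constant factor that is absorbed into the $9$ in the main exponent.
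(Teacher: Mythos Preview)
Your strategy matches the paper's: partition into blocks, handle the adversarial boundary letter by a one-position shift, and invoke Lemma~\ref{lem-cyc-red-word-count} for the cyclic correction. The paper's execution differs in two coupled ways that are worth noting. First, it uses blocks of length only $g(l)+1$---a single buffer letter followed by one length-$g(l)$ slot---so your $S_j^{(2)}$ alone already does the job and the two-slot case split is unnecessary; counting over the $2m-1$ choices of buffer letter gives at most $(2m-1)^{g(l)} + (2m-2)\bigl[(2m-1)^{g(l)}-1\bigr]$ continuations that avoid $r_0$ in that slot, uniformly in what came before. Second, the paper places only $A = \lfloor l/(2(g(l)+1)) \rfloor$ such blocks in roughly the first half of the word and leaves a tail of length $t \geq l/2-1$; the cyclic-reduction constraint lives entirely in that tail, and Lemma~\ref{lem-cyc-red-word-count} applied with $n=t$ yields the factor $\max\{p_t,q_t,r_t\}/\min\{p_t,q_t,r_t\} \leq 1 + 2/(2m-1)^{t}$ directly, with no separate ``two halves'' decomposition needed. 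Your arcs of length $3g(l)$ are longer than required for the slot argument, and this costs you the stated constant: with $k = \lfloor l/(3g(l)) \rfloor$ and per-arc probability $1/(2(2m-1)^{g(l)})$, reaching $k/2 \geq l/(9g(l))$ requires $l \geq 9g(l)$, which does not follow from the hypothesis $g(l) < l/4$.
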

\begin{proof}
	Consider a cyclically reduced relator $r_1$ of length $l$ which omits $r_0$.
	Let $A = \lfloor \frac{l}{2(g(l)+1)} \rfloor$.
	Let us split up $r_1$ into an initial letter, then
	words $u_1, u_2, \ldots, u_A$ of length $g(l)+1$, plus a tail of length $t$, 
	where $t$ must be between $(l/2) - 1$ and $3l/4$.
	Each word $u_i$ consists of an initial letter, plus a word of length $g(l)$,
	which is not $r_0$.
	
	The initial letter of $r_1$ has $2m$ possibilities.
	For each $i=1,\ldots, A$, either the initial letter of $u_i$ matches the inverse of 
	the initial letter of $r_0$, or it does not.
	In the former case, the remaining $g(l)$ letters have $(2m-1)^{g(l)}$ possibilities,
	while in the latter case there are only $(2m-1)^{g(l)}-1$ possibilities, since
	the word $r_0$ is excluded.  
	The number of possibilities for the remaining $t$ letters is
	bounded by $\max\{p_t,q_t,r_t\}$ (as defined in Lemma~\ref{lem-cyc-red-word-count}).
	Altogether, we have a bound
	{\allowdisplaybreaks
	\begin{align*}
		\frac{N_{r_0}}{N_l}
		& \leq \frac{2m \left( (2m-1)^{g(l)} + (2m-2) \left[ (2m-1)^{g(l)}-1 \right]
			\right)^A \max\{p_t,q_t,r_t\}}
			{2m (2m-1)^{(g(l)+1)A} \min\{p_t,q_t,r_t\}} \\
		& \leq \left( \frac{(2m-1)^{g(l)} + (2m-2) \left[ (2m-1)^{g(l)}-1 \right]}
			{(2m-1)^{g(l)+1}} \right)^{\! A} \! \! \left( 1+ \frac{2}{(2m-1)^t} \right) \\
		& =  \left( 1 - \frac{(2m-2)}{(2m-1)^{g(l)+1}} \right)^A 
			\left( 1+ \frac{2}{(2m-1)^t} \right) \\
		& \leq \exp \left(\frac{2}{(2m-1)^t} - A \cdot \frac{(2m-2)}{(2m-1)^{g(l)+1}} \right),
			\text{using}\ 1+x\leq e^{x}.
	\end{align*} }
	Observe that $A \geq \frac{l}{6g(l)}$, and $\frac{2m-2}{2m-1}\geq \frac{2}{3}$, thus:
	\begin{equation*}
		\frac{N_{r_0}}{N_l}
		\leq \exp \left(\frac{2}{(2m-1)^{(l/2)-1}} - 
			\frac{l}{9 g(l) (2m-1)^{g(l)}}
			\right).\qedhere
	\end{equation*}
\end{proof}

\subsection{Short subwords of generic few relator presentations}
\begin{proposition}\label{prop-few-rel-wordlength}
	There exists a constant $C$ (depending on $m$) so that a generic few relator
	presentation with relator lengths at most $l$ 
	contains every reduced word of length $\lceil C \log(l) \rceil$
	as a subword of some relator.
	
	In fact, we can take any $C < 1/\log(2m-1)$.
\end{proposition}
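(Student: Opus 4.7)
The plan is to run a straightforward union bound. For each reduced word $w$ of length $g(l) := \lceil C \log l \rceil$, I will estimate the probability that \emph{every} relator $r_i$ omits $w$, then sum over the (polynomially many, since $C\log(2m-1)<1$) choices of $w$. The input from Lemma~\ref{lem-cyc-omit-gl-word} provides an exponentially small bound for each $w$, and the slack between polynomial growth of the number of words and super-polynomial decay of each probability is what drives the argument to zero.

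First I would reduce to the generic class of presentations whose relators have lengths in $[0.99l, l]$, exactly as in the discussion preceding Proposition~\ref{prop-lambda-gen-upper}. Fix a reduced word $w_0$ of length $g(l)$. Since $C\log(2m-1)<1$, for $l$ large we have $4 < g(l) < l_i/4$ for every admissible $l_i \in [0.99l, l]$, so Lemma~\ref{lem-cyc-omit-gl-word} applies to each relator conditioned on its length. Writing $\alpha := C \log(2m-1) \in (0,1)$, we have $(2m-1)^{g(l)} \le (2m-1)\cdot l^{\alpha}$, and hence
\[
 \frac{l_i}{9\, g(l)\, (2m-1)^{g(l)}} \;\gtrsim\; \frac{l^{1-\alpha}}{\log l} \;\longrightarrow\; \infty,
\]
while the other term in the exponent of Lemma~\ref{lem-cyc-omit-gl-word} is $O(1)$. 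So, conditional on the lengths, each relator $r_i$ omits $w_0$ with probability at most $\exp(-\Omega(l^{1-\alpha}/\log l))$.

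Next, since the $n$ relators are drawn independently, the probability that \emph{all} of them omit $w_0$ is bounded by the product, giving a bound of the form $\exp(-n \cdot \Omega(l^{1-\alpha}/\log l))$, still super-polynomially small in $l$. The number of reduced words of length $g(l)$ is at most $2m(2m-1)^{g(l)-1} \le (2m-1)\cdot l^\alpha$. A union bound therefore yields
\[
 \Pr\!\left[\exists\, w \text{ of length } g(l) \text{ omitted by every } r_i\right] \;\le\; (2m-1)\, l^{\alpha} \cdot e^{-\Omega(l^{1-\alpha}/\log l)},
\]
which tends to $0$ as $l\to\infty$. Averaging over the allowed length profiles $(l_1,\ldots,l_n) \in [0.99l,l]^n$ preserves this bound, completing the argument.

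The only real obstacle is bookkeeping: ensuring that the hypotheses of Lemma~\ref{lem-cyc-omit-gl-word} are met uniformly for all relator lengths in $[0.99l, l]$ and tracking that the polynomial factor $l^\alpha$ coming from counting words is decisively beaten by the decay in the union bound. Both are immediate consequences of the strict inequality $C \log(2m-1) < 1$, so no finer estimate is needed.
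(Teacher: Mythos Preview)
Your proposal is correct and follows essentially the same route as the paper: apply Lemma~\ref{lem-cyc-omit-gl-word} to bound the probability that a single relator omits a fixed word $w_0$ of length $g(l)$, then union-bound over the $\lesssim l^{\alpha}$ choices of $w_0$, using $\alpha = C\log(2m-1) < 1$ to ensure the super-polynomial decay beats the polynomial count. The only cosmetic difference is that the paper proves the slightly stronger statement that every such word appears in \emph{every} relator (so it never multiplies the $n$ independent bounds), while you explicitly invoke independence and the $[0.99l,l]$ reduction; both are immaterial to the argument.
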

We will actually show that every reduced word of length $\lceil C \log(l) \rceil$
appears as a subword of every relator.
\begin{proof}
	Let $N_{g(l)}$ be the number of cyclically reduced words in $\langle S \rangle$
	of length $l$ which contain every word of length at most $g(l)$.
	To prove the proposition, it suffices to show that
	$(N_l - N_{g(l)})/N_l \ra 0$ as $l \ra \infty$, where
	$g(l) = \lceil C \log(l) \rceil$.

	By Lemma~\ref{lem-cyc-omit-gl-word}, the probability of an individual relator
	omitting a fixed word $r_0$ of length $g(l)$ is at most
	\[
		\exp \left(1 - \frac{l}{9 g(l) (2m-1)^{g(l)}} \right).
	\]
	There are at most $\frac{4}{3}(2m-1)^{g(l)}$ choices for $r_0$, so the 
	probability of missing some word of length $g(l)$ satisfies
	\begin{align*}
		\frac{N_l - N_{g(l)}}{N_l}
			&\leq \frac{4}{3}(2m-1)^{g(l)} \cdot
				\exp \left(1 - \frac{l}{9 g(l) (2m-1)^{g(l)}} \right) \\
			&\leq 4 \exp \left( \log(2m-1)g(l) - 
				\frac{l}{9 g(l) (2m-1)^{g(l)}} \right).
	\end{align*}
	Note that since $g(l) = \lceil C \log(l) \rceil$, 
	$(2m-1)^{g(l)}$ behaves like $l^{C\log(2m-1)}$ for large $l$.
	Thus, if $C \log(2m-1) < 1$, then
	$\frac{N_l - N_{g(l)}}{N_l}$ will go to zero as $l \ra \infty$.	
\end{proof}

\subsection{Short subwords in the density model}

The following proposition is a version of \cite[Prop.~9]{Oll-05-rand-grp-survey}.
Ollivier sketches a proof for $0<C<d<1$; for completeness we provide a proof in the
following special case.
\begin{proposition}\label{prop-dens-wordlength}
	For any $0<C < d< 1/4$, a generic presentation
	at density $d$ contains every reduced word of length $\lceil C l \rceil$ as a subword
	of some relator.
\end{proposition}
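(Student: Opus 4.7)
The plan is to fix a target word $r_0$ of length $g(l) = \lceil Cl \rceil$, bound the probability that a single random relator omits $r_0$ using Lemma~\ref{lem-cyc-omit-gl-word}, multiply across the $(2m-1)^{dl}$ independent relators, and then union bound over the at most $\frac{4}{3}(2m-1)^{g(l)}$ possible choices of $r_0$. The condition $C < d$ will make a double exponential decay defeat the single exponential from the union bound.

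In more detail, first note that for $l$ sufficiently large we have $4 < g(l) < l/4$, so Lemma~\ref{lem-cyc-omit-gl-word} applies and gives that the proportion of cyclically reduced words of length $l$ that omit a fixed $r_0$ is at most
\[
\exp\!\left( \frac{2}{(2m-1)^{(l/2)-1}} - \frac{l}{9\,g(l)\,(2m-1)^{g(l)}} \right).
\]
Since $g(l) \leq Cl+1$ and $C<1/2$, the negative term dominates, and for $l$ large this upper bound is at most $\exp\!\left(-\beta (2m-1)^{-Cl}\right)$ for some constant $\beta=\beta(m,C)>0$.

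Because the $(2m-1)^{dl}$ relators are chosen independently and uniformly, the probability that \emph{every} relator omits the fixed word $r_0$ is bounded by the product of these single-relator probabilities, i.e.\ at most
\[
\exp\!\left( - \beta \,(2m-1)^{dl}\,(2m-1)^{-Cl} \right) \;=\; \exp\!\left(-\beta\,(2m-1)^{(d-C)l}\right).
\]
Union bounding over the at most $\frac{4}{3}(2m-1)^{g(l)} \leq 2(2m-1)^{Cl}$ reduced words $r_0$ of length $g(l)$, the probability that \emph{some} such word fails to appear in any relator is at most
\[
2\,(2m-1)^{Cl} \cdot \exp\!\left(-\beta\,(2m-1)^{(d-C)l}\right),
\]
and since $d-C>0$ this is a single exponential divided by a double exponential in $l$, which tends to $0$ as $l \to \infty$.

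The only real subtlety, and what I would take care to track, is the interplay between the per-relator bound, which tends to $1$, and the exponentially many relators: one must verify that $(2m-1)^{(d-C)l}/(\text{poly in } g(l))$ really does go to infinity fast enough, which is exactly what the strict inequality $C<d$ buys. A secondary bookkeeping issue is that cyclically reduced words of length $g(l)$ are counted slightly differently than reduced words, but the $\frac{4}{3}$ factor in the estimate $N_{g(l)} \leq \frac{4}{3}(2m-1)^{g(l)}$ swallows this cleanly. The hypothesis $d<1/4$ enters only to guarantee $g(l)<l/4$ so that Lemma~\ref{lem-cyc-omit-gl-word} is applicable.
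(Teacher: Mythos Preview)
Your proposal is correct and follows essentially the same route as the paper: apply Lemma~\ref{lem-cyc-omit-gl-word} to a single relator, raise to the $(2m-1)^{dl}$th power by independence, and union bound over the $\lesssim (2m-1)^{g(l)}$ choices of $r_0$, with $C<d$ producing the double-exponential decay and $d<1/4$ ensuring the lemma's hypothesis $g(l)<l/4$. The paper tracks the constants slightly more explicitly (obtaining $\frac{1}{10C}(2m-1)^{(d-C)l-1}$ where you write $\beta\,(2m-1)^{(d-C)l}$), but the argument is the same.
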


\begin{proof}
	This follows a similar proof to Proposition~\ref{prop-few-rel-wordlength}.
	There are $(2m-1)^{dl}$ reduced words chosen independently, so the probability
	that they all omit a particular word $r_0$ of length $g(l) = \lceil C l \rceil$
	is, by Lemma~\ref{lem-cyc-omit-gl-word}, at most
	\begin{align*}
		& \left[ 
			\exp \left(\frac{2}{(2m-1)^{(l/2)-1}} -
					\frac{l}{9 g(l) (2m-1)^{g(l)}} \right)
			\right]^{(2m-1)^{dl}} \\
		& \quad  = 
			\exp \left(\frac{2 (2m-1)^{dl}}{(2m-1)^{(l/2)-1}} - 
					\frac{l(2m-1)^{dl}}{9 g(l) (2m-1)^{g(l)}} \right) \\
		& \quad \lesssim \exp \left( \frac{-1}{10C}(2m-1)^{(d-C)l-1}
			\right),
	\end{align*}
	for sufficiently large $l$.
	
	Again, there are at most $\frac{4}{3}(2m-1)^{g(l)}$ choices for $r_0$,
	so the probability $P$ that some word of length $g(l) = \lceil C l \rceil$
	is omitted satisfies
	\begin{align*}
		P & \lesssim \frac{4}{3}(2m-1)^{g(l)} \cdot
			\exp \left( \frac{-1}{10C}(2m-1)^{(d-C)l-1}
			\right) \\
		& \lesssim
			\exp \left( 2\log(2m-1)Cl - 
				\frac{1}{10C}(2m-1)^{(d-C)l-1}
				\right),
	\end{align*}
	for large $l$, and this goes to zero as $l \ra \infty$.
\end{proof}


\section{Cayley graphs of small cancellation groups}\label{sec-sc-cayley}

In $C'(1/6)$ small cancellation groups, geodesic bigons and triangles
are known to have certain special forms \cite{Str-88-sc-hyp,Cha-95-rand-grps}.
In this section we recall these standard facts, and give some extensions
to the case of geodesic $n$-gons which will be needed in Section~\ref{sec-ac-upper-cdim}.

Throughout this section, $G = \langle S \mid R \rangle$ is a finitely presented group,
where every $r \in R$ is a cyclically reduced word in $\langle S \rangle$.

\begin{definition}\label{def-diagram}
	A \emph{diagram} for a reduced word $w \in G$ is a connected,
	contractible, finite, pointed,
	planar 2-complex $\cD$ which satisfies the following conditions:
	\begin{enumerate}
		\item Each edge of $\cD$ is oriented and labelled with an element of $S$,
		\item For each face $B \subset \cD$, reading the edge labels along
			its boundary $\partial B$ gives a (cyclic conjugate of) a word 
			$r^{\pm 1}$, $r \in R$.
		\item The base point lies on the boundary $\partial\cD$, and reading
			the edge labels from this point around $\partial\cD$ 
			counter-clockwise gives $w$.
	\end{enumerate}
	We say $\cD$ is \emph{reduced} if there are never two distinct
	faces $B_1,B_2$ which
	intersect in at least one edge, so that the labellings on $\partial B_1$ and
	$\partial B_2$, read from this edge clockwise and counter-clockwise respectively,
	agree.
\end{definition}

\begin{lemma}[Strebel \cite{Str-88-sc-hyp}]\label{lem-sixfb}
	Suppose $\cD$ is a reduced diagram homeomorphic to a disc.
	For a vertex $v$, let $d(v)$ denote its degree.
	For a face $B$, let $|\partial B|$ denote its degree,
	let $e(B)$ denote the number of exterior edges of $B$,
	and let $i(B)$ denote the number of interior edges.
	Then
	\begin{equation}
		6 = 2 \sum_v(3 - d(v)) + \sum_B (6 - 2e(B) -i(B)).\label{eq-sixfb}
	\end{equation}
\end{lemma}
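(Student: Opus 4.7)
The plan is to derive the identity purely by topological and combinatorial bookkeeping, with Euler's formula doing all of the work. Since $\cD$ is a contractible planar $2$-complex homeomorphic to a disc, its cellular Euler characteristic satisfies
\[
V - E + F = 1,
\]
where $V$, $E$, $F$ count the vertices, edges, and $2$-cells of $\cD$ (the unbounded complementary region is not counted). Multiplying this identity by $6$ produces the constant appearing on the left-hand side of \eqref{eq-sixfb}, so it will suffice to show that the right-hand side equals $6(V-E+F)$.

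Next I would rewrite each of the two sums on the right-hand side of \eqref{eq-sixfb} in terms of $V$, $E$, and $F$. For the vertex sum, the standard incidence identity $\sum_v d(v) = 2E$ gives $2\sum_v(3 - d(v)) = 6V - 4E$. For the face sum, I partition the edges into \emph{interior} edges, incident to two distinct $2$-cells of $\cD$, and \emph{exterior} edges, lying on $\partial\cD$. Each interior edge is traversed by the boundaries of exactly two faces, so it contributes $2$ to $\sum_B i(B)$ and $0$ to $\sum_B e(B)$; each exterior edge is traversed by the boundary of exactly one face, so it contributes $1$ to $\sum_B e(B)$ and $0$ to $\sum_B i(B)$. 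Writing $E_{\mathrm{int}}$ and $E_{\mathrm{ext}}$ for the respective numbers of interior and exterior edges, this yields
\[
\sum_B i(B) = 2E_{\mathrm{int}}, \qquad \sum_B e(B) = E_{\mathrm{ext}},
\]
and hence
\[
\sum_B \bigl(6 - 2e(B) - i(B)\bigr) = 6F - 2E_{\mathrm{ext}} - 2E_{\mathrm{int}} = 6F - 2E.
\]
Combining the two contributions gives $(6V - 4E) + (6F - 2E) = 6(V - E + F) = 6$, which is exactly \eqref{eq-sixfb}.

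The only point needing care is ensuring that each edge is counted with the correct multiplicity in the face sum: if the boundary of a single face traversed some edge twice, or if an interior edge bounded only one face, the bookkeeping above would fail. Reducedness of $\cD$ and the assumption that $\cD$ is homeomorphic to a disc rule out these pathologies, since each $2$-cell is then a polygonal disc whose boundary traverses each edge at most once, and each interior edge lies between two distinct faces. Beyond verifying this, the argument is entirely mechanical; the one substantive ingredient is Euler's formula for the disc, so I do not anticipate any real obstacle.
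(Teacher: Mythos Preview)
Your proof is correct and follows essentially the same route as the paper: Euler's formula $V-E+F=1$, the vertex-edge incidence $\sum_v d(v)=2E$, and the face-edge incidence $\sum_B(2e(B)+i(B))=2E$, combined linearly to yield the identity. The paper presents this even more tersely as ``consider $6\cdot\eqref{eq-vef1}-2\cdot\eqref{eq-vef2}-\eqref{eq-vef3}$''. Your final paragraph is slightly overcautious: reducedness alone does not prevent a face boundary from traversing an edge twice (that is established later, in Lemma~\ref{lem-onesixth-nice-faces}, using the $C'(1/6)$ condition), but the identity holds regardless once $e(B)$ and $i(B)$ are read as counting with multiplicity, which is the natural interpretation and what the paper implicitly does.
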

\begin{proof}
	Suppose $\cD$ has $V$ vertices, $E$ edges and $F$ faces.  Then
	\begin{align}
		1 + E & = V + F = \sum_v 1 + \sum_B 1, \label{eq-vef1}\\
		2E & = \sum_v d(v) \label{eq-vef2}, \text{ and}\\
		2E & = \bigg( \sum_B |\partial B| \bigg) + |\partial\cD| = \sum_B \left( 2e(B)+i(B) \right) \label{eq-vef3}.
	\end{align}
	Consider $6\cdot\eqref{eq-vef1}-2\cdot\eqref{eq-vef2}-\eqref{eq-vef3}$.
\end{proof}

\begin{definition}
	The Cayley graph $\Gamma(G,S) = \Gamma^1(G,S)$ of a group $G$ with finite
	generating set $S$ is the graph with vertex set $G$,
	and an unoriented edge between $\{g, gs\}$ for all $g \in G$, $s \in S \cup S^{-1}$.
\end{definition}

Suppose $P$ is a geodesic $n$-gon in the Cayley graph $\Gamma(G,S)$,
where $G=\langle S \mid R \rangle$ satisfies $C'(\lambda)$, for
some $\lambda\in(0, \frac{1}{6}]$.
We want to show that $P$ is slim; that is, any side of $P$
is contained in a suitable neighborhood of the other sides.

As $P$ is a closed loop, van Kampen's lemma states that there is a
reduced diagram $\cD$ for $P$.  We may assume that the boundary word
is cyclically reduced, and that $\cD$ is homeomorphic to a disc; this
only makes it harder to show that $P$ is slim.
  
We remove all vertices of degree $2$ from $\cD$ and 
relabel edges with the corresponding words in $\langle S \rangle$.  So now all vertices
have degree at least $3$.

In this reduced diagram, there are two kinds of faces that have external
edges, those where a endpoint of a side of $P$ lies in the interior
of an external edge, and all others.  We call the former kind distinguished;
there are at most $n$ of them.

When $e(B)=1$ and $B$ is not distinguished, the external edge with label
$u$ is a geodesic in $\Gamma(G,S)$, and so $|u| \leq \frac{1}{2} |\partial B|$.
Now each remaining edge of $B$ is internal, and so a piece of $G$, and so
has length less than $\lambda |\partial B|$.
Thus
\[
	\frac{1}{2} |\partial B| \leq \sum \big\lbrace |t| : t \ \text{internal edge of } B \big\rbrace
		< i(B)\lambda |\partial B|,
\]
So $i(B) > \frac{1}{2\lambda}$, thus 
$i(B) \geq \lfloor \frac{1}{2\lambda} + 1 \rfloor =: d_{Ext}(\lambda) \geq 4$.

Note also that each edge of an interior face $B$ ($e(B)=0$) has 
length strictly less than $\lambda|\partial B|$, so
\[
	i(B) > \frac{1}{\lambda} \quad\Rightarrow\quad i(B) \geq 
		\Big\lfloor \frac{1}{\lambda} + 1 \Big\rfloor =: d_{Int}(\lambda) \geq 7.
\]
Thus \eqref{eq-sixfb} splits into cases as follows.
\begin{align}
	\label{eq-sixfbcases}
	\begin{split}
	6 & = 2 \sum_v(3 - d(v)) + \sum_{B,\,e(B)=0} (6 -i(B)) +
		\sum_{\substack{B,\,e(B)=1\\ \text{not dist.}}} (4 -i(B)) + \\ & \quad
		\sum_{\substack{B,\,e(B)=1\\ \text{dist.}}} (4 -i(B)) +  
		\sum_{B,\,e(B)=k\geq 2} (6 - 2k -i(B))
	\end{split}\\
	& \leq - (d_{Int}(\lambda)-6) F_I + 3n,\notag
\end{align}
where $F_I$ is the number of interior faces of $\cD$.
We have shown the following.
\begin{lemma}\label{lem-int-face-bound}
	In the above situation,
	\[
		F_I \leq \frac{3n-6}{d_{Int}(\lambda)-6}.
	\]
\end{lemma}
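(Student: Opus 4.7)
The bulk of the work is already embedded in equation~\eqref{eq-sixfbcases}, so the plan is essentially to verify that the displayed inequality on its second line follows, and then rearrange it to isolate $F_I$. In more detail, I would proceed as follows.

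First, I would justify each of the term-wise bounds that collapse \eqref{eq-sixfbcases} to the single inequality $6 \leq -(d_{Int}(\lambda)-6)F_I + 3n$. The vertex sum $2\sum_v(3-d(v))$ is non-positive because we removed all degree-$2$ vertices, so $d(v) \geq 3$ for every remaining $v$. For interior faces ($e(B)=0$), the bound $i(B) \geq d_{Int}(\lambda)$ established above gives $\sum (6-i(B)) \leq (6-d_{Int}(\lambda))F_I$. For non-distinguished faces with $e(B)=1$, the bound $i(B) \geq d_{Ext}(\lambda) \geq 4$ makes each term $4-i(B)$ non-positive, so that sum is $\leq 0$. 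For the at-most-$n$ distinguished faces with $e(B)=1$, I would note that each contributes at most $3$ to the sum (since a distinguished face must carry at least one interior edge, giving $4-i(B)\leq 3$), so this sum is bounded by $3n$. Finally, faces with $e(B)=k\geq 2$ contribute $6-2k-i(B)\leq 2-i(B) \leq 0$ as soon as $i(B)\geq 2$, and the exceptional configurations can be absorbed into the $3n$ count by charging each such face to the corners of $P$ it carries.

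Combining these estimates inside \eqref{eq-sixfbcases} yields
\begin{equation*}
  6 \;\leq\; 0 \;+\; (6 - d_{Int}(\lambda))F_I \;+\; 0 \;+\; 3n \;+\; 0,
\end{equation*}
which is precisely the intermediate inequality written after \eqref{eq-sixfbcases}. Rearranging,
\begin{equation*}
  (d_{Int}(\lambda)-6)\, F_I \;\leq\; 3n - 6,
\end{equation*}
and since $\lambda \leq 1/6$ implies $d_{Int}(\lambda) \geq 7 > 6$, division preserves the inequality and gives the claimed bound $F_I \leq (3n-6)/(d_{Int}(\lambda)-6)$.

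The only genuinely delicate point is the accounting for distinguished faces and for faces with $e(B)\geq 2$, because in principle a distinguished face could have an interior edge count as small as $1$ (giving a $+3$ contribution), and a face with several external edges could carry multiple corners of $P$. The main obstacle in writing a fully rigorous proof is therefore to pin down the combinatorial bookkeeping that shows these exceptional contributions, aggregated over all $n$ corners of the geodesic $n$-gon, do not exceed $3n$. Everything else is just plugging the lower bounds $d_{Int}(\lambda), d_{Ext}(\lambda)$ into \eqref{eq-sixfbcases} and solving a linear inequality.
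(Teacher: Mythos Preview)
Your approach is correct and matches the paper's, which simply records the inequality $6 \leq -(d_{Int}(\lambda)-6)F_I + 3n$ directly after \eqref{eq-sixfbcases} and then states the lemma as an immediate rearrangement. The one point you flag as delicate --- the contribution of faces with $e(B)=k\geq 2$ --- actually requires no charging argument: since degree-$2$ vertices have been removed and $\partial B$ is a simple curve (Lemma~\ref{lem-onesixth-nice-faces}), two consecutive edges of $\partial B$ cannot both be external (their common vertex would otherwise have degree~$2$), so $i(B)\geq e(B)$ and hence $6-2k-i(B)\leq 6-3k\leq 0$ for every $k\geq 2$; that last sum is therefore non-positive with no exceptional configurations to absorb.
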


We now consider other aspects of the geometry of $C'(1/6)$ groups
that will be needed in the remainder of the paper.
\begin{lemma}\label{lem-onesixth-nice-faces}
	Suppose $G=\langle S | R\rangle$ is $C'(1/6)$,
	and that the diagram $\cD$ has no vertices of degree two.
	Then any two distinct faces $B,B' \subset \cD$ are
	either disjoint, meet at a single point, 
	or meet along a single edge.
	
	Also, the boundary of any face $B$ is a simple curve,
	i.e., the face does not bump into itself.
%
\end{lemma}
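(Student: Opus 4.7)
My plan is to prove both statements by contradiction, in each case extracting a sub-disc $\cD_1 \subseteq \cD$ whose boundary is a simple closed curve built from arcs of $\partial B$ (and possibly $\partial B'$), and then deriving a contradiction from the $C'(1/6)$ hypothesis.

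First I would handle the second claim. Assume $\partial B$ is not simple; then $\partial B$ revisits some vertex $v$, and it contains an innermost simple sub-loop $\gamma$ based at $v$. By planarity, $\gamma$ bounds a sub-disc $\cD_1 \subseteq \cD$ on the side opposite to $B$, so $B \not\subseteq \cD_1$. If $\cD_1$ contains a single face $B^*$, then $\partial B^* = \gamma$ is a full subword of $\partial B$: this either forces $B^* = B$ and $\partial B = \gamma$ simple (contradicting our assumption), or $B^* \neq B$ and we get a piece of length $|\partial B^*|$, violating $C'(1/6)$. Otherwise $\cD_1$ has at least two faces; I then apply Greendlinger's lemma to $\cD_1$ (viewed as a reduced $C'(1/6)$-diagram, both properties inherited from $\cD$) to locate a boundary face $B^* \subseteq \cD_1$ whose intersection with $\partial \cD_1 = \gamma$ is a connected arc $\eta$ of length strictly greater than $\frac{1}{2}|\partial B^*|$. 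Since $\gamma \subseteq \partial B$, the label on $\eta$ is a common subword of $\partial B^*$ and $\partial B$, a piece of length $> \frac{1}{2}|\partial B^*| > \frac{1}{6}|\partial B^*|$, contradicting $C'(1/6)$.

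Next, using that boundaries are simple, I would prove the first claim. Suppose distinct faces $B$ and $B'$ meet along more than one edge, or along two disjoint components. Using simplicity I select vertices $x, y \in \partial B \cap \partial B'$ and arcs $\alpha \subseteq \partial B$, $\beta \subseteq \partial B'$ from $x$ to $y$ such that $\gamma := \alpha \cup \beta$ is a simple closed curve bounding a sub-disc $\cD_1$ with both $B$ and $B'$ lying outside. Applying Greendlinger to $\cD_1$ yields a face $B^* \subseteq \cD_1$ whose intersection with $\gamma$ is a connected arc $\eta$ of length $> \frac{1}{2}|\partial B^*|$. If $\eta$ lies entirely in $\alpha$ or entirely in $\beta$, it is a single piece of length $> \frac{1}{6}|\partial B^*|$, contradicting $C'(1/6)$. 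Otherwise $\eta$ passes through $x$ or $y$ and decomposes into two sub-arcs in $\alpha$ and $\beta$, at least one of length $> \frac{1}{4}|\partial B^*| > \frac{1}{6}|\partial B^*|$, again a piece violating $C'(1/6)$.

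The main obstacle I foresee is confirming the topological setup: that the innermost simple sub-loop $\gamma$ genuinely exists and bounds a valid reduced sub-diagram $\cD_1 \subseteq \cD$ to which Greendlinger's lemma applies, and that in the single-face case the vertices line up correctly to force $B^* = B$. Handling $\partial B$ with multiple pinch points, or $B \cap B'$ with intricate disconnected structure, also requires some care in constructing $\gamma$, but reduces inductively to the innermost cases treated above.
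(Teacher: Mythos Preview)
Your proof via Greendlinger's lemma is correct, but the paper takes a shorter and more self-contained route using the combinatorial curvature identity~\eqref{eq-sixfb} already established in Lemma~\ref{lem-sixfb}. In both approaches one extracts a sub-disc $\cD'$ enclosed by $\partial B$ (or by arcs of $\partial B$ and $\partial B'$); the paper then observes that $\cD'$ lies entirely in the interior of $\cD$, so every edge of every face of $\cD'$ is an interior edge of $\cD$ and hence a piece, forcing each such face to have degree at least $7$. Since all but at most one vertex (the pinch point, in the self-bump case) or two vertices (the corners, in the two-face case) of $\cD'$ retain degree $\geq 3$, substituting into~\eqref{eq-sixfb} immediately gives $6 \leq 2$ or $6 \leq 4$, a contradiction with no further case analysis.

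Your approach trades this direct curvature count for an appeal to Greendlinger (itself typically proved from the same formula) together with separate handling of the single-face subdiagram; the paper's version is more uniform and stays within the tools already set up. One small slip in your two-face argument: the Greendlinger arc $\eta$ may pass through \emph{both} corners $x$ and $y$, in which case it splits into three sub-arcs (all of $\alpha$ plus two pieces of $\beta$, or vice versa) rather than two, so your ``$> \tfrac14|\partial B^*|$'' bound fails. The repair is immediate: three pieces summing to more than $\tfrac12|\partial B^*|$ still force one of length greater than $\tfrac16|\partial B^*|$, violating $C'(1/6)$.
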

\begin{proof}
%
	If the boundary of a face $B$ is not a simple curve,
	$B$ encloses a subdiagram $\cD'$ 
	in the interior of $\cD$, all of whose vertices (except
	perhaps one) have degree at least three, and all of
	whose faces have degree at least seven.
	This contradicts \eqref{eq-sixfb}.
	
	Similarly, if two faces meet at more than a single
	edge, they enclose a subdiagram $\cD'$ in the interior
	of $\cD$, and this has at most two vertices of degree two.
	This again contradicts \eqref{eq-sixfb}.
\end{proof}

Lemma~\ref{lem-int-face-bound} immediately implies that reduced diagrams for
geodesic bigons have no internal faces, and that reduced diagrams for
geodesic triangles have at most three internal faces.
We can make more precise statements in these cases.
See \cite[Theorem 35]{Str-88-sc-hyp} and \cite[Proposition 3.6]{Cha-95-rand-grps} for proofs.
\begin{lemma}\label{lem-thin-triangles}
	Reduced diagrams for geodesic bigons in a $C'(1/6)$ group have a specific
	form, as illustrated by Figure~\ref{fig-bigon}.  
	\begin{figure}
		\begin{center}
		\includegraphics[width=0.9\textwidth]{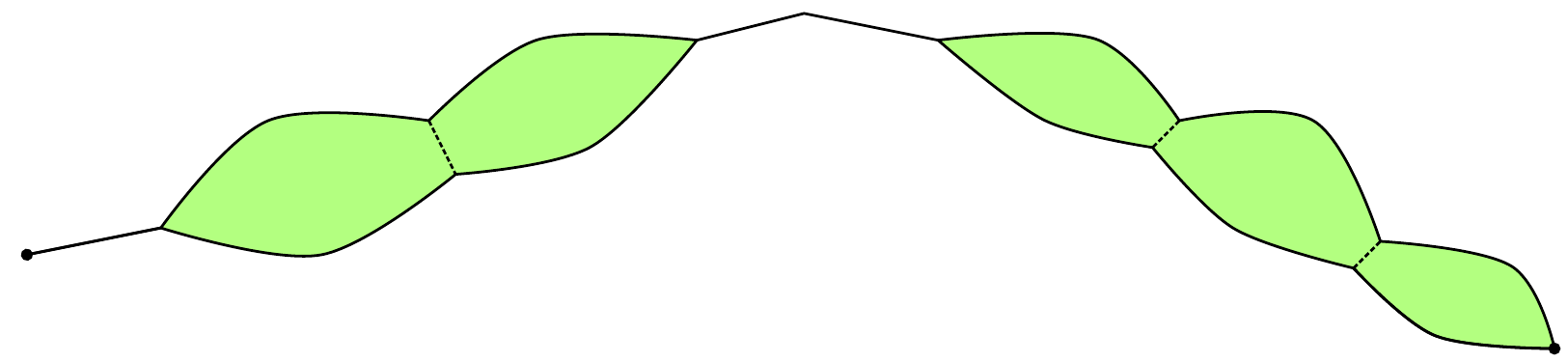}
		\end{center}
		\caption{A geodesic bigon in a $C'(1/6)$ group}\label{fig-bigon}
	\end{figure}
	
	Reduced diagrams for geodesic triangles in a $C'(1/6)$ group have
	no interior faces.   
	In particular, the Cayley graph is $2M$-hyperbolic,
	where $M = \max_{r \in R} |R|$.
	After removing spurs, the reduced diagram for a geodesic triangles has no more than 
	six connected faces.
	(Figure~\ref{fig-tri} gives an example, with the dual diagram of these six faces indicated.)
\end{lemma}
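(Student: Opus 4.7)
The plan is to apply Lemma~\ref{lem-int-face-bound} to the cases $n=2$ and $n=3$, then use the refined accounting in \eqref{eq-sixfbcases} together with Lemma~\ref{lem-onesixth-nice-faces} to pin down the shape of the diagram and derive hyperbolicity.

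For bigons, setting $n=2$ in Lemma~\ref{lem-int-face-bound} yields $F_I \leq 0$, so every face of a reduced diagram for a geodesic bigon has at least one external edge. By Lemma~\ref{lem-onesixth-nice-faces} distinct faces meet only in a single vertex or a single edge, which already arranges the diagram as a tree of faces strung between the two distinguished boundary vertices. The remaining step is to rule out any face with two external edges on the same geodesic side: the complementary portion of $\partial B$ would provide a shorter path between their endpoints (using $i(B) \geq 4$ and the $C'(1/6)$ length bound on pieces), contradicting geodesicity of that side. Hence every face spans between the two sides, and the diagram becomes the linear chain pictured in Figure~\ref{fig-bigon}.

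For triangles, $n=3$ gives the preliminary bound $F_I \leq 3$. To upgrade this to $F_I = 0$ I would follow Strebel's approach: each interior face contributes $6-i(B) \leq -1$ to \eqref{eq-sixfbcases}, and a careful analysis of the dual graph of faces shows that hosting an interior face forces an adjacent external face either to repeat external edges on a single geodesic side (ruled out as in the bigon case) or to violate the lower bound $i(B) \geq d_{Ext}(1/6)=4$. Once every face touches the boundary, the dual graph is a tree connecting the three distinguished faces, and plugging $i(B) \geq 4$ for non-distinguished external faces and $i(B)\geq 0$ for the at most three distinguished faces into \eqref{eq-sixfbcases} bounds the total face count by six. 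For the hyperbolicity claim, any point on a side of a geodesic triangle sits on $\partial B$ for some face with $|\partial B|\leq M$; the remainder of $\partial B$ either reaches another side directly or continues into a neighboring face whose boundary does, producing a path of length at most $2M$ from the point to the union of the other two sides.

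The main obstacle is eliminating interior faces from triangle diagrams: Lemma~\ref{lem-int-face-bound} alone only narrows the count to at most three, and the full elimination relies on the delicate combinatorial case analysis of Strebel and Champetier, which tracks how the dual graph of faces interacts with the geodesic hypothesis on each side of the triangle. Rather than reproducing that analysis in full, I would invoke \cite[Theorem 35]{Str-88-sc-hyp} and \cite[Proposition 3.6]{Cha-95-rand-grps} to complete the verification, noting that the accounting above already reconciles those statements with the face-count bound needed in Section~\ref{sec-ac-upper-cdim}.
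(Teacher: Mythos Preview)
Your approach aligns with the paper's own treatment: the paper does not supply a proof of this lemma but simply refers the reader to \cite[Theorem~35]{Str-88-sc-hyp} and \cite[Proposition~3.6]{Cha-95-rand-grps}, exactly as you end up doing for the triangle case. In that sense you actually provide \emph{more} argument than the paper, since you sketch the bigon case and the hyperbolicity deduction explicitly.

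One small remark on your bigon sketch: the step ``rule out a face with two external edges on the same geodesic side by finding a shorter path along $\partial B$'' is not quite the right phrasing. If $B$ meets one side $\gamma$ in two separate external edges, those edges need not be consecutive along $\gamma$, so $B$ together with an arc of $\gamma$ encloses a subdiagram; the contradiction then comes from applying \eqref{eq-sixfb} to that subdiagram (this is precisely the mechanism used later in Lemma~\ref{lem-geodesicloop}), rather than from a direct length comparison along $\partial B$. Alternatively, and closer to what the paper does in the very next lemma (Lemma~\ref{lem-bigon-face}), one can observe that for a bigon the inequalities in \eqref{eq-sixfbcases} must be equalities, forcing every vertex to have degree three and every non-distinguished face to have exactly two external and two internal edges, which immediately gives the chain shape without any separate geodesic-shortcut argument. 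Either route is fine, but your ``shorter path'' sentence as written does not quite close the gap.
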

Recall that if a geodesic triangle has sides 
$\gamma_{12}, \gamma_{13}$ and $\gamma_{23}$ joining
vertices $P_1, P_2$ and $P_3$, and $\cD$ is a reduced diagram for the triangle,
then the spur of $\cD$ containing $P_1$ is the maximal
subdiagram of $\cD$ bounded by $\gamma_{12}, \gamma_{13}$ and a vertex or a single internal edge.

In Sections \ref{sec-round-tree} and \ref{sec-lower-cdim}, we will use the 
following slight generalization of the geodesic bigon description above.
\begin{lemma}\label{lem-bigon-face}
	Suppose $\cD$ is a reduced diagram in a $C'(1/6)$ group
	whose boundary is labelled by, in order, a geodesic $[p,u]$, part of a face
	$B \subset \cD$, and a geodesic $[v,p]$.
	Then $\cD$ has the same form as a diagram for a bigon as illustrated by Figure \ref{fig-bigon} above.
\end{lemma}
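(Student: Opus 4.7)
The plan is to adapt Strebel's proof of the geodesic bigon case of Lemma~\ref{lem-thin-triangles}, treating the face $B$ as one ``side'' of a generalised bigon. After removing interior degree-$2$ vertices of $\cD$ (and concatenating adjacent edge labels) while retaining the corners $p, u, v$, every non-corner vertex has degree at least $3$. Each corner contributes at most $2(3-2)=2$ to $2\sum_v(3-d(v))$ in the curvature formula \eqref{eq-sixfb}, so the vertex contribution is bounded above by $6$. We may assume $B$ is not all of $\cD$ (else the lemma is trivial), and that the middle arc of $\partial\cD$ is precisely the set of external edges of $B$, using the simple-closed-curve structure of $\partial B$ (Lemma~\ref{lem-onesixth-nice-faces}) to ensure $\partial B\cap\partial\cD$ is a single arc.

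The face contributions $(6 - 2e(B') - i(B'))$ are controlled as in the bigon proof. Interior faces have $i(B') \geq 7$ by $C'(1/6)$ and contribute at most $-1$ each. A boundary face $B'\ne B$ whose external edges lie on a single geodesic side $[p,u]$ or $[v,p]$ admits the standard argument (external length $\leq |\partial B'|/2$ since it lies on a geodesic, combined with each internal edge of length $< |\partial B'|/6$ forcing $i(B')\ge 4$), yielding contribution $\leq 0$. The key structural observation is that every external edge of $\cD$ belongs to exactly one face, so every external edge on the middle arc is on $\partial B$: no face other than $B$ can have an external edge on the middle arc, and in particular no face other than $B$ can straddle $u$ or $v$. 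As a consequence, if $u$ (respectively $v$) had degree $2$, the single face at that corner would be $B$ itself and $B$ would straddle, contradicting our reduction. Hence $u, v$ have degree at least $3$, and only $p$ can contribute positively from the vertex sum.

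At most one corner-straddling face may therefore exist, namely at $p$: non-degeneracy gives $i(B')\ge 1$ and $e(B')\ge 2$, so its contribution is at most $1$. The face $B$ contributes $6 - 2e(B) - i(B) \leq 3$ (with equality only when $e(B)=i(B)=1$). Summing all contributions in \eqref{eq-sixfb}, the positive terms total at most $2 + 3 + 1 = 6$, forcing $F_I\le 0$. Once $F_I = 0$ is established, Lemma~\ref{lem-onesixth-nice-faces} and the disc topology of $\cD$ arrange the remaining faces into a linear chain between $B$ and the vertex $p$, yielding the bigon-like structure of Figure~\ref{fig-bigon}. The main obstacle is the initial reduction to the assumption that $B$'s external edges are exactly the middle arc, together with the simultaneous management of all positive contributions: each of $B$'s term, the straddler at $p$, and the degree-$2$ vertex at $p$ can individually be positive, and only the delicate combined accounting above --- governed by the piece bound on every edge of every face --- closes the inequality.
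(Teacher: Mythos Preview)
Your approach is essentially the paper's: apply the curvature formula \eqref{eq-sixfb} with $B$ and the face at $p$ as the only exceptional faces, force $F_I=0$, and read off the chain structure from the resulting equalities. The paper's execution is slightly cleaner in two respects. First, it removes \emph{all} degree-$2$ vertices, including $p,u,v$; then the vertex sum is automatically nonpositive and the only positive face terms are those of $B$ and the (single) face whose exterior edge contains $p$, each contributing at most $3$. This sidesteps your reduction ``the middle arc is precisely the set of external edges of $B$'', which is not actually delivered by Lemma~\ref{lem-onesixth-nice-faces}: that lemma tells you $\partial B$ is a simple closed curve, but not that $\partial B\cap\partial\cD$ is connected (that statement is essentially Lemma~\ref{lem-geodesicloop}, proved later). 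Your accounting can be rescued without the reduction by observing that each degree-$2$ corner $u$ or $v$ forces an extra external edge on $B$, so the $+2$ vertex contribution is exactly cancelled by a $-2$ in $B$'s term; but as written the justification is incomplete. Second, the paper explicitly reduces to the case $[p,u]\cap[v,p]=\{p\}$ (removing any shared initial segment as a degenerate bigon) before invoking \eqref{eq-sixfb}, which requires a disc; you assume disc topology without addressing this.
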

\begin{proof}
	As in the calculation of \eqref{eq-sixfbcases}, we remove all degree two vertices from $\cD$.
	We can assume that $[p,u] \cap [p,v] = \{p\}$ in $\cD$, otherwise a geodesic bigon is formed and
	we can remove it and continue.  Thus we have a diagram homeomorphic to a disc.
	Every internal face has at least seven edges.
	Every face with an external edge, with the possible exceptions of $B$ and the face containing $p$
	in its boundary, will have at least four internal edges.
	
	Therefore by \eqref{eq-sixfbcases}, there are no internal faces.
	Moreover, the inequalities in \eqref{eq-sixfbcases} are equalities,
	and so $B$ and the face containing $p$ have exactly one internal edge,
	and all vertices have degree three.
	Also, all faces with at least two external edges have exactly two
	external and two internal edges.
	Thus the faces adjacent to $B$ and the face containing $p$
	have exactly two external and two internal edges.
	We continue, and deduce that the diagram has the form
	of a chain of faces from $p$ to $B$ meeting along single internal edges.
\end{proof}

	\begin{figure}
		\begin{center}
		\includegraphics[width=0.7\textwidth]{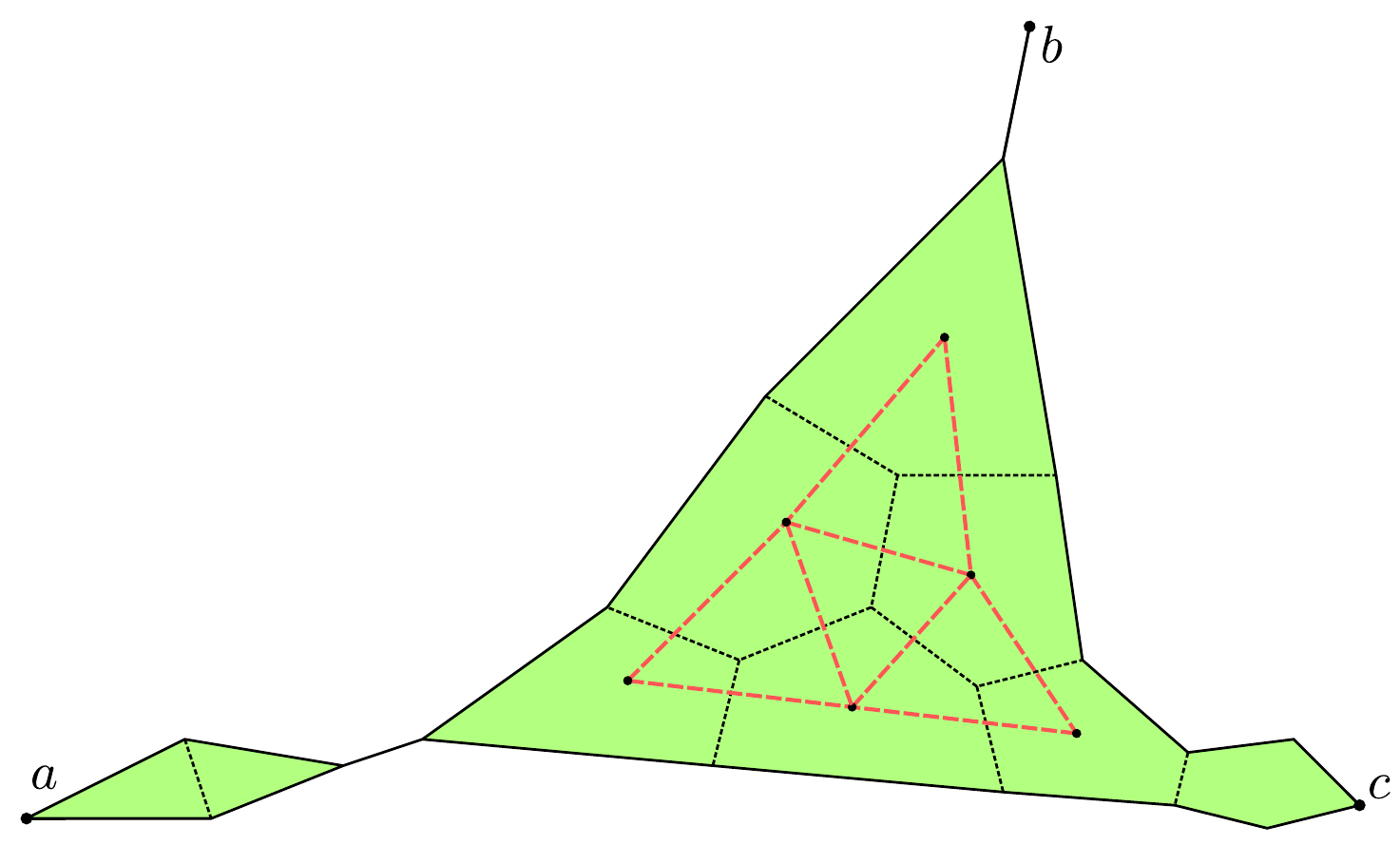}
		\end{center}
		\caption{A geodesic triangle in a $C'(1/6)$ group}\label{fig-tri}
	\end{figure}


\section{Asymptotic curvature bounds and an upper bound for conformal dimension}\label{sec-ac-upper-cdim}

\subsection{Outline}
If $G$ is hyperbolic, geodesic triangles in $\Gamma(G,S)$ are uniformly slim.
Consequently, geodesic $n$-gons will be $(C\log(n))$-slim, for some $C$ independent
of $n$.
Bonk and Foertsch \cite{BF-06-ACu-kappa} investigated this further and linked
the behavior of geodesic $n$-gons to the optimal visual parameter $\epsilon$ for
visual metrics on the boundary of $G$.
In this section we will use these ideas to prove the following theorem.
\begin{theorem}\label{thm-sc-cdim-upper}
	If $G = \langle S | R \rangle$ is a $C'(\lambda)$ presentation of a group, with
	$\lambda \leq {1}/{6}$, and $|r| \leq M$ for all $r \in R$, then
	\[
		\Cdim(\partial_\infty(G)) 
			\leq \frac{M}{2\log\lfloor\frac{1}{\lambda}-4\rfloor} \log(2|S|-1).
	\]
\end{theorem}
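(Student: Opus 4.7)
The strategy is to invoke the visual parameter bound \eqref{eq-cdim-eps-entropy}, $\Cdim(\bdry G) \leq h(G)/\epsilon \leq \log(2|S|-1)/\epsilon$, and to exhibit an admissible visual parameter $\epsilon$ that is as large as possible. The naive bound $\epsilon \leq (\log 2)/(4\delta)$ from $\delta$-hyperbolicity with $\delta=2M$ (Lemma \ref{lem-thin-triangles}) is insensitive to $\lambda$ and so yields nothing better than a linear-in-$M$ conclusion independent of $\lambda$. To extract a dependence on $\lambda$, I would invoke Bonk and Foertsch's framework of asymptotic upper curvature, which provides visual metrics with larger admissible parameter whenever one has a better-than-generic logarithmic control on the slimness of geodesic $n$-gons as $n\to\infty$.

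\textbf{Step 1 (slimness of $n$-gons).} The core combinatorial claim is that geodesic $n$-gons in $\Gamma(G,S)$ are $O(M\log_k n)$-slim, where $k=\lfloor 1/\lambda - 4\rfloor$. Two facts feed in: Lemma \ref{lem-int-face-bound}, which bounds the number of interior faces of a reduced diagram by $O(n/(d_{Int}(\lambda)-6))$, and the observation that every internal edge in such a diagram is a piece of length strictly less than $\lambda M$. Starting from a reduced diagram $\cD$ for a geodesic $n$-gon, I would induct on $n$: either $n$ is small enough that Lemma \ref{lem-thin-triangles} (and its extension Lemma \ref{lem-bigon-face}) applies directly, or else a face with enough internal edges splits $\cD$ into subdiagrams whose boundary complexities are each at most a factor $1/k$ of the original. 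Each splitting costs $O(M)$ in slimness (the length of one piece, crossed), and there are $O(\log_k n)$ recursive levels. The bookkeeping mirrors the case split \eqref{eq-sixfbcases}, handling interior, single-external, distinguished, and high-valence boundary faces separately, so that the reduction factor really is $\lfloor 1/\lambda - 4\rfloor$ rather than merely $2$ as in a naive triangulation.

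\textbf{Step 2 (Bonk-Foertsch and conclusion).} Feed the slimness estimate of Step 1 into Bonk-Foertsch's theorem: a $c M \log_k n$-slim polygon bound upgrades the asymptotic upper curvature of $\Gamma(G,S)$ to a value at least $(2/M)\log k$, so that $\bdry G$ admits a visual metric with parameter $\epsilon = (2/M)\log\lfloor 1/\lambda - 4\rfloor$. Plugging into \eqref{eq-cdim-eps-entropy} gives
\[
\Cdim(\bdry G) \;\leq\; \frac{\log(2|S|-1)}{\epsilon} \;\leq\; \frac{M}{2\log\lfloor 1/\lambda - 4\rfloor}\,\log(2|S|-1),
\]
which is the desired estimate.

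The main obstacle is Step 1. The slimness estimate must be proved with the correct constant inside the logarithm in order for the Bonk-Foertsch translation to produce exactly the factor $2\log\lfloor 1/\lambda - 4\rfloor$ in the denominator; this requires choosing the inductive decomposition of $\cD$ carefully (so that the sub-$n$-gons really are of complexity $n/k$ up to additive constants), and tracking the small-cancellation geometry near distinguished faces so that no $O(1)$ losses spoil the logarithm base. Step 2, by contrast, is a direct appeal to \cite{BF-06-ACu-kappa}.
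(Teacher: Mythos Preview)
Your overall architecture is exactly the paper's: reduce Theorem~\ref{thm-sc-cdim-upper} to a slimness estimate for geodesic $n$-gons of the form $\Delta \leq \frac{M}{2}\log_k(n) + C$ with $k = \lfloor 1/\lambda - 4\rfloor$, then feed this into Bonk--Foertsch (Definition~\ref{def-AC-upper}, Theorem~\ref{thm-AC-visual}) to obtain a visual metric with parameter arbitrarily close to $\frac{2}{M}\log k$, and conclude via \eqref{eq-cdim-eps-entropy}. Step~2 is identical to the paper's.

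The divergence is in Step~1, and here your proposed mechanism has a genuine gap. You suggest a divide-and-conquer: find a face with many internal edges that splits $\cD$ into subdiagrams each of complexity at most $n/k$, and recurse. But nothing in the small-cancellation combinatorics forces such a split to be \emph{balanced}. An interior face with $d_{Int}(\lambda)$ edges does cut $\cD$ into many pieces, but one of those pieces may well carry almost all of the $n$ distinguished faces, so the recursion depth is not controlled by $\log_k n$. You flag this as ``the main obstacle'' and say it ``requires choosing the inductive decomposition carefully,'' but no mechanism is offered, and I do not see how to produce one that yields the precise base $\lfloor 1/\lambda - 4\rfloor$.

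The paper's argument for Step~1 (Proposition~\ref{prop-facesthin}) avoids the balancing problem entirely by running the growth in the opposite direction. Start from a face $B_0$ containing the given boundary point $x$, and take iterated star neighborhoods $\cD_{i+1} = \St(\cD_i)$. A careful layer-by-layer analysis (using Lemma~\ref{lem-onesixth-nice-faces} and repeated applications of \eqref{eq-sixfb} to rule out faces bumping or rejoining) shows that the outermost layer $\cD_i \setminus \cD_{i-1}$ has at least $\gtrsim \lfloor 1/\lambda - 4\rfloor^i$ faces, all interior to $\cD$, until another side of the $n$-gon is reached. Since Lemma~\ref{lem-int-face-bound} caps the total number of interior faces at $O(n)$, this forces the chain length $k$ to satisfy $\lfloor 1/\lambda - 4\rfloor^k \lesssim n$, which is exactly the required bound. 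The constant $\lfloor 1/\lambda - 4\rfloor$ arises transparently as $d_{Int}(\lambda) - 5$: each new-layer face loses at most two edges to its neighbors in the same layer, one or two to $\cD_i$, and contributes the rest to the next layer. No balancing is needed because one is counting, not splitting.
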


We recall one of the equivalent definitions of asymptotic upper curvature,
and the result which we will need.
\begin{definition}[Bonk and Foertsch]\label{def-AC-upper}
	A geodesic metric space $X$ has
	\emph{an asymptotic upper curvature bound $\kappa$},
	written $\AC_u(\kappa)$, for $\kappa \in [-\infty,0)$,
	if there exists some $C$ so that for every $n \in \N$, $n \geq 2$,
	every geodesic $(n+1)$-gon in $X$
	is $\left(\frac{1}{\sqrt{-\kappa}}\log(n) + C\right)$-slim.
	
	(Recall that a geodesic $(n+1)$-gon is $\Delta$-slim if every side is in the union of
	the $\Delta$-neighborhoods of the other $n$ sides.)
\end{definition}

\begin{theorem}[{\cite[Theorem 1.5]{BF-06-ACu-kappa}}]\label{thm-AC-visual}
	If a geodesic metric space $X$ is \linebreak $\AC_u(\kappa)$, for some $\kappa \in [-\infty,0)$, then
	for every $0<\epsilon<\sqrt{-\kappa}$ there is a visual metric on $\partial_\infty X$
	with parameter $\epsilon$.
\end{theorem}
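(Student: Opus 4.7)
The plan is to construct, for every $\epsilon < \sqrt{-\kappa}$, a visual metric on $\bdry X$ via a standard Frink-style metrization of a snowflaked Gromov product, using the $\AC_u(\kappa)$ slimness of polygons to control the defect in the resulting chain inequality. Fix a basepoint $o \in X$ and define on $\bdry X$ the symmetric gauge $\rho_\epsilon(\xi, \eta) := e^{-\epsilon(\xi \mid \eta)_o}$, where $(\xi \mid \eta)_o$ denotes the Gromov product at infinity, obtained in the usual way by $\liminf_{i,j}(x_i \mid y_j)_o$ along sequences $x_i \to \xi$, $y_j \to \eta$. The task is to show $\rho_\epsilon$ is bi-Lipschitz equivalent to an honest metric.

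The heart of the argument is a chain inequality for Gromov products: for any $\xi_0, \xi_1, \ldots, \xi_n \in \bdry X$,
\[
    (\xi_0 \mid \xi_n)_o \;\geq\; \min_{0 \leq i < n}(\xi_i \mid \xi_{i+1})_o \;-\; \frac{\log(n+1)}{\sqrt{-\kappa}} \;-\; C',
\]
for a constant $C' = C'(X)$ that is independent of $n$ and of the chain. To derive this, I would approximate each $\xi_i$ by a point $x_i$ far out along a geodesic ray from $o$ to $\xi_i$, form the geodesic $(n+2)$-gon with vertices $o, x_0, x_1, \ldots, x_n$, and apply Definition~\ref{def-AC-upper} to it: the side $[o, x_0]$ fellow-travels the union of the other $n+1$ sides within $(\log(n+1))/\sqrt{-\kappa} + C$. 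Picking a point on $[o, x_0]$ at depth equal to $\min_i (x_i \mid x_{i+1})_o$ and translating this fellow-travelling into Gromov-product arithmetic (via the identity $(a \mid b)_o = \tfrac{1}{2}(d(o,a) + d(o,b) - d(a,b))$ and the triangle inequality), then letting $x_i \to \xi_i$ along the rays, yields the inequality with $C'$ absorbing the additive slimness constant together with uniform ideal-boundary corrections.

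Exponentiating gives the chain estimate
\[
    \rho_\epsilon(\xi_0, \xi_n) \;\leq\; e^{\epsilon C'} (n+1)^{\epsilon/\sqrt{-\kappa}} \sum_{i=0}^{n-1} \rho_\epsilon(\xi_i, \xi_{i+1}).
\]
Because $\epsilon/\sqrt{-\kappa} < 1$ strictly, the exponent of $(n+1)$ is less than one, and an iterated version of Frink's metrization lemma (as in Heinonen's \emph{Lectures on Analysis on Metric Spaces}, Chapter~14) applies to show that the chained infimum
\[
    d_\epsilon(\xi, \eta) := \inf \Bigl\{ \sum_{i=0}^{n-1} \rho_\epsilon(\xi_i, \xi_{i+1}) : \xi_0 = \xi,\; \xi_n = \eta \Bigr\}
\]
is a genuine metric on $\bdry X$, bi-Lipschitz equivalent to $\rho_\epsilon$; this $d_\epsilon$ is the desired visual metric of parameter $\epsilon$.

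The main obstacle is establishing the chain inequality with the constant $C'$ genuinely uniform in $n$ and in the position of the points $\xi_i \in \bdry X$: one must carry the $(n+2)$-gon slimness bound through the limiting procedure $x_i \to \xi_i$ while controlling the arising error terms independently of $n$, and then verify that the sub-linear-in-$n$ blow-up factor $(n+1)^{\epsilon/\sqrt{-\kappa}}$ (rather than a constant quasi-triangle factor) is still compatible with Frink's lemma, which requires its iterated form rather than the single-step version. Once both of these are in hand, the construction of $d_\epsilon$ and the verification that it is visual of parameter $\epsilon$ are routine.
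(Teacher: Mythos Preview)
The paper does not prove this theorem; it is quoted verbatim as Theorem~1.5 of Bonk--Foertsch \cite{BF-06-ACu-kappa} and used as a black box to deduce Theorem~\ref{thm-sc-cdim-upper} from Theorem~\ref{thm-sc-ac-link}. So there is no proof in the paper to compare against.

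Your sketch is in fact close to the original Bonk--Foertsch argument: they too derive a chain inequality on Gromov products from the polygon slimness assumption, exponentiate, and then run a Frink-type chain construction. Two remarks on your write-up. First, the exponentiated inequality you obtain is really $\rho_\epsilon(\xi_0,\xi_n) \leq C(n+1)^{\epsilon/\sqrt{-\kappa}} \max_i \rho_\epsilon(\xi_i,\xi_{i+1})$, not the sum; writing the sum is harmless since it is weaker, but the $\max$ version is what you actually get and what feeds cleanly into the metrization step. Second, you are right to flag the Frink step as the crux: the standard quasimetric metrization lemma (Heinonen, Chapter~14) assumes a fixed constant in the quasi-triangle inequality, and here the constant grows like $n^\alpha$ with $\alpha<1$. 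Bonk--Foertsch handle this by proving a tailored chain lemma (their Lemma~2.2) showing that such a sub-linear growth still forces the chain infimum to be bi-Lipschitz to $\rho_\epsilon$; this is not an immediate consequence of the classical Frink statement, so in a full proof you would need to either reprove that lemma or cite it directly.
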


This result, and the bound in \eqref{eq-cdim-eps-entropy},
reduce the proof of Theorem~\ref{thm-sc-cdim-upper} to the following statement,
which we prove in the following subsection.
\begin{theorem}\label{thm-sc-ac-link}
	If $G = \langle S | R \rangle$ is a $C'(\lambda)$ presentation of a group, with
	$\lambda \leq \frac{1}{6}$, and $|r| \leq M$ for all $r \in R$, then
	the Cayley graph $\Gamma(G,S)$ is $AC_u(\kappa)$ with
	$\kappa = -\frac{4}{M^2}\log^2\big\lfloor\frac{1}{\lambda}-4\big\rfloor$.
\end{theorem}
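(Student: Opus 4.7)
The approach is to verify the slimness condition of Definition~\ref{def-AC-upper} directly in the Cayley graph. Fix a geodesic $(n+1)$-gon $P$ in $\Gamma(G,S)$ with sides $\gamma_0, \ldots, \gamma_n$, let $\cD$ be a reduced disc diagram for $P$ with all degree-two vertices contracted, and fix $p$ on some side, say $\gamma_0$, lying on an external edge of a face $B_0$ of $\cD$. My plan is to produce a chain of faces $B_0, B_1, \ldots, B_k$, consecutive ones sharing at least one edge, terminating in a face $B_k$ meeting some side $\gamma_j$ with $j \neq 0$, and with $k+1 \leq \log n / \log\lfloor 1/\lambda - 4 \rfloor + O(1)$. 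Since the boundary of any face reads a cyclic conjugate of a relator of length at most $M$, any two vertices on $\partial B$ are joined by a sub-arc of length at most $M/2$, so faces have Cayley-graph diameter at most $M/2$. Such a chain would therefore give $d(p, \gamma_j) \leq (k+1) M/2 \leq \frac{M}{2 \log\lfloor 1/\lambda - 4 \rfloor} \log n + C$, which is exactly the slimness bound required for $\AC_u(\kappa)$ with the given $\kappa$.

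To produce the chain, I would perform a breadth-first search from $B_0$ in the face-adjacency graph of $\cD$. Call a face \emph{$\gamma_0$-monochromatic} if it is either interior or a boundary face all of whose external edges lie on $\gamma_0$. If the BFS fails to reach a non-$\gamma_0$-monochromatic face within $k$ steps, then the set of faces within dual distance $k$ of $B_0$ forms a connected sub-diagram $\cD_0 \subset \cD$ whose frontier in $\cD$ (apart from its edges on $\gamma_0$) consists entirely of internal edges of $\cD$. Each face $B \in \cD_0$ has many internal edges by the small cancellation bounds: interior faces satisfy $i(B) \geq d_{Int}(\lambda) = \lfloor 1/\lambda + 1 \rfloor$, and non-distinguished monochromatic boundary faces with $e(B) = 1$ satisfy $i(B) \geq d_{Ext}(\lambda) = \lfloor 1/(2\lambda) + 1 \rfloor$, as derived before \eqref{eq-sixfbcases}. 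Propagating this expansion outward from $B_0$ should force $|\cD_0| \gtrsim \lfloor 1/\lambda - 4 \rfloor^{k}$. Comparing against the linear-in-$n$ bound on interior faces coming from Lemma~\ref{lem-int-face-bound} (augmented by a similar bound on $\gamma_0$-monochromatic boundary faces, obtained by applying \eqref{eq-sixfb} to $\cD_0$ itself) yields $k \leq \log n / \log\lfloor 1/\lambda - 4 \rfloor + O(1)$.

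The main obstacle will be calibrating the branching factor in the BFS to be exactly $\lfloor 1/\lambda - 4 \rfloor$. Interior faces of $\cD_0$ happily branch with factor $d_{Int}(\lambda) - 1$, comfortably larger than $\lfloor 1/\lambda - 4 \rfloor$, but monochromatic boundary faces of $\cD_0$ branch with only $d_{Ext}(\lambda) - 1$ new face-neighbours per step, which is too small by roughly a factor of two. Resolving this requires grouping consecutive BFS steps through a boundary face together, reflecting the identity $2 d_{Ext}(\lambda) - 6 = \lfloor 1/\lambda \rfloor - 4$ which is comparable to $\lfloor 1/\lambda - 4 \rfloor$. A secondary technicality is the at most $n+1$ distinguished boundary faces of $\cD$ containing vertices of $P$, whose geometry is less tightly controlled by the small cancellation condition; since their number is bounded linearly in $n$ and their individual perturbation of the BFS is uniformly bounded, they should be absorbable into the additive constant $C$. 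Finally, the BFS in the dual graph of a planar disc must be set up so that new layers genuinely contribute new faces; I expect this to follow from the simply-connected structure of $\cD_0$ together with Lemma~\ref{lem-onesixth-nice-faces}, which controls how faces meet in a $C'(1/6)$ diagram.
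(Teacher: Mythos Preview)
Your overall strategy matches the paper's Proposition~\ref{prop-facesthin} closely: grow star-neighbourhoods $\cD_i = \St(\cD_{i-1})$ from $B_0$, show the number of faces in the outer ring grows like $\lfloor 1/\lambda - 4\rfloor^i$, compare against $F_I \lesssim n$ from Lemma~\ref{lem-int-face-bound}, and convert to slimness via the face-diameter bound $M/2$. Where you diverge is in handling the faces along $\gamma_0$. Your two proposed fixes---grouping pairs of BFS steps to compensate for the smaller branching $d_{Ext}(\lambda)-1$ at boundary faces, and bounding the count of $\gamma_0$-monochromatic faces by applying \eqref{eq-sixfb} to $\cD_0$---are respectively unnecessary and not clearly justified (faces that were interior in $\cD$ need not satisfy useful degree bounds once viewed as exterior faces of the sub-diagram $\cD_0$). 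The paper's route is cleaner: it proves, via Lemma~\ref{lem-geodesicloop} and three short planar claims, that each $\cD_i$ is a disc meeting $\gamma_0$ in a \emph{single} segment, so exactly two faces in each new ring $\cD_{i+1}\setminus\cD_i$ touch $\gamma_0$ and all the rest are interior in $\cD$. This yields the recurrence $l_{i+1} \geq (l_i-2)\lfloor 1/\lambda - 4\rfloor$ directly and gives $F_I \geq l_k - 2$ with no separate bookkeeping for boundary faces. In your language: once you observe that at most $2k+1$ faces in the $k$-ball can touch $\gamma_0$, the grouping trick and the auxiliary Euler-formula argument both become superfluous.
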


Note that every $\delta$-hyperbolic space is $AC_u(\kappa)$ for some 
$\kappa \approx -\frac1{\delta^2}$, see \cite[Equation (3)]{BF-06-ACu-kappa}.
The groups considered in Theorem~\ref{thm-sc-ac-link} all have $\delta = 2M$,
but we can use the $C'(\lambda)$ condition when $\lambda$ is small to find an 
improved $\kappa$ bound.
%
%
%

\subsection{Slim \textit{n}-gons}
To prove Theorem~\ref{thm-sc-ac-link}, we show that while a reduced diagram
for a geodesic $n$-gon may have interior faces, they cannot
be too far from the boundary of the diagram.
\begin{proposition}\label{prop-facesthin}
	Let $\cP$ be a geodesic $n$-gon in the Cayley graph of a 
	$C'(\lambda)$ group $G = \langle S | R \rangle$, $\lambda \leq 1/6$, 
	and let $\cD$ be a reduced diagram for $\cP$.
	
	Then there exists some constant $C=C(\lambda)$ so that,
	for any $x \in \partial\cD = \cP$, there is a chain of at most
	$k+1$ faces joining $x$ to another side of $\cP$, with
	\[
		0 \leq k \leq \frac{\log(n)}{\log \lfloor \tfrac{1}{\lambda}-4 \rfloor} + C.
	\]
	This means that there are faces $B_0,B_1,\ldots, B_k$ so that
	$x \in \partial B_0$, $B_j \cap B_{j+1} \neq \emptyset$ for $0 \leq j < k$,
	and $B_k$ meets another side of $\cP$.
\end{proposition}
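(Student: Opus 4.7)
The plan is to run a shell-growth argument on the reduced diagram $\cD$. Fix $x \in \partial\cD$ lying on side $s$ of the $n$-gon $\cP$ and a face $B_0 \subset \cD$ whose closure contains $x$. For each $j \geq 0$ let $\cD_j$ be the union of all faces $B$ reachable from $B_0$ by a chain $B_0, B_1, \ldots, B_i = B$ with $i \leq j$ of pairwise intersecting faces, no $B_t$ of which meets a side of $\cP$ other than $s$. The hypothesis of interest is that $\cD_k$ is well-defined, i.e.\ that no chain of $\leq k+1$ faces from $x$ reaches a side other than $s$; we want to bound such $k$.

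The key step is to prove exponential growth $|\cD_j| \gtrsim b^j$ with $b := \lfloor 1/\lambda - 4 \rfloor = d_{Int}(\lambda) - 5$. By Lemma~\ref{lem-onesixth-nice-faces}, any two faces of $\cD$ meet at worst along a single edge or at a single point, so each $\cD_j$ is a connected, simply-connected planar subcomplex, i.e.\ a disk diagram in its own right. Apply Strebel's identity \eqref{eq-sixfb} to $\cD_j$: its faces split into three types, namely (i) those interior to both $\cD_j$ and $\cD$, for which $i_{\cD_j}(B) = i_\cD(B) \geq d_{Int}(\lambda)$; (ii) those interior to $\cD$ but lying on the inner boundary of $\cD_j$, some of whose edges leak outward to the next shell $\cD_{j+1}\setminus \cD_j$; and (iii) those external in $\cD$, which by hypothesis touch only side $s$ and so satisfy $i_\cD(B) \geq d_{Ext}(\lambda) \geq 4$. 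Bookkeeping the internal-vs.-external edge contributions of each type in Strebel's identity for $\cD_j$ yields a discrete isoperimetric inequality of the form $|\cD_{j+1}| - |\cD_j| \gtrsim b \cdot (|\cD_j|-|\cD_{j-1}|)$, and iterating gives the claimed exponential growth.

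Next I would bound $|\cD_k|$ above in terms of $n$. By Lemma~\ref{lem-int-face-bound} the diagram $\cD$ has at most $(3n-6)/(d_{Int}(\lambda)-6) = O(n)$ interior faces. The external faces of $\cD$ lying inside $\cD_k$ all sit along the single side $s$, so their number is comparable to the outermost shell on $s$ and is absorbed into the same shell estimate. Hence $|\cD_k| \leq C(\lambda)\, n$, and combining with $|\cD_k| \gtrsim b^k$ gives $b^k \lesssim n$, whence $k \leq \log(n)/\log b + C$ as required.

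The main obstacle is the exponential growth estimate. The delicate bookkeeping is for the type-(ii) faces: although they are interior in $\cD$ and so carry $\geq d_{Int}(\lambda)$ internal edges there, some of those edges are spent connecting backward to the previous shell or sideways within the current shell rather than forward to $\cD_{j+1}$. The drop from $d_{Int}(\lambda)$ to $b = d_{Int}(\lambda) - 5 = \lfloor 1/\lambda - 4 \rfloor$ is precisely the slack needed to absorb these backward and sideways connections together with the degree-three vertex and distinguished-face terms in Strebel's budget; quantifying this loss is the heart of the proof.
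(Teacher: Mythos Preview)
Your outline matches the paper's approach: grow shells $\cD_0 \subset \cD_1 \subset \cdots$ around $B_0$, show the shell sizes $l_j = |\cD_j \setminus \cD_{j-1}|$ grow like $b^j$ with $b = \lfloor 1/\lambda - 4\rfloor$, and terminate via the interior-face bound $F_I \leq (3n-6)/(d_{Int}(\lambda)-6)$ of Lemma~\ref{lem-int-face-bound}, noting that once growth stops all but two faces of the outermost shell are interior in $\cD$.

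One concrete gap: Lemma~\ref{lem-onesixth-nice-faces} does \emph{not} by itself force $\cD_j$ to be simply connected. That lemma only says any two faces meet in at most one edge or one point; it does not prevent two distinct chains out of $B_0$ from meeting again further out and enclosing a face not yet reached in $j$ steps. In the paper this is handled by an inductive structural hypothesis $A(i)$ on $\cD_i$ (disk topology, meeting the side $\gamma$ in a single arc, explicit lower bounds on the number of outward-pointing edges of each boundary face), established via three claims showing that the new shell faces $B^{(i+1)}_p$ are pairwise distinct, can meet each other only along $\partial\cD_i$, and meet $\gamma$ only at the two extreme positions. That structural analysis is exactly what simultaneously delivers the forward-edge lower bounds ($d_{Int}(\lambda)-4$ for middle boundary faces, $d_{Ext}(\lambda)-2$ at the two ends) driving the recursion $l_{i+1} \geq (l_i-2)b$. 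So the disk property and the ``delicate bookkeeping'' you correctly flag as the heart of the proof are really one package, not two separable steps; you should expect to prove them together rather than invoking Lemma~\ref{lem-onesixth-nice-faces} and then doing edge-counting separately.
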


\begin{proof}
	We begin with the following lemma.
	\begin{lemma}\label{lem-geodesicloop}
		Let $G$ be a $C'(1/6)$ group.
		If $\cD$ is a reduced diagram containing a face $B$ and a geodesic segment $\gamma$,
		then $\gamma \cap B$ is connected.	
	\end{lemma}
	\begin{proof}
		Suppose not.  Then $\gamma$ and $B$ enclose a subdiagram $\cD'$ of $\cD$.
		As in Section~\ref{sec-sc-cayley}, we assume that we have removed
		all degree two vertices from $\cD'$.
		Every internal face of $\cD'$ has at least seven internal faces.
		Since $\gamma$ is a geodesic segment, every face with an external edge, except
		possibly $B$, will have at least four internal edges.
		
		Therefore \eqref{eq-sixfb} gives us the following contradiction:
		\[ 
			6 \leq (6-2e(B)-i(B)) \leq 3. \qedhere
		\]
	\end{proof}
	We now continue the proof of the proposition.
	
	The point $x$ lies in the boundary of some face $B_0 \subset \cP$.
	Let $\gamma \subset \cP$ be the geodesic side of $\cP$ containing $x$.
	We may assume that $B_0$ does not meet $\cP \setminus \gamma$, else the single chain $B_0$ suffices.
	
	Lemma~\ref{lem-geodesicloop} shows that $B_0$ has a single exterior edge that
	is a geodesic segment in $\gamma$.
	By Lemma~\ref{lem-onesixth-nice-faces}, $B_0$ is homeomorphic
	to a closed disc, with
	$i(B_0) \geq \lfloor \frac{1}{2\lambda} +1 \rfloor \geq 4$.
	
	Recall that the \emph{star neighborhood} of a subcomplex $\cD' \subset \cD$ is the union
	of all closed cells in $\cD$ meeting $\cD'$, and is denoted by $\St(\cD')$.	
	
	Let $\cD_0$ be the subdiagram consisting of the single face $B_0$.
	For $i \geq 0$, let $\cD_{i+1} = \St(\cD_i)$.
	
	We will show that the number of faces in $\cD_i$ grows exponentially in $i$ until another
	side of $\cP$ is found.
	An example of this is shown in Figure~\ref{fig-nbhd-growth}.
	\begin{figure}
		\begin{center}
		\includegraphics[width=0.9\textwidth]{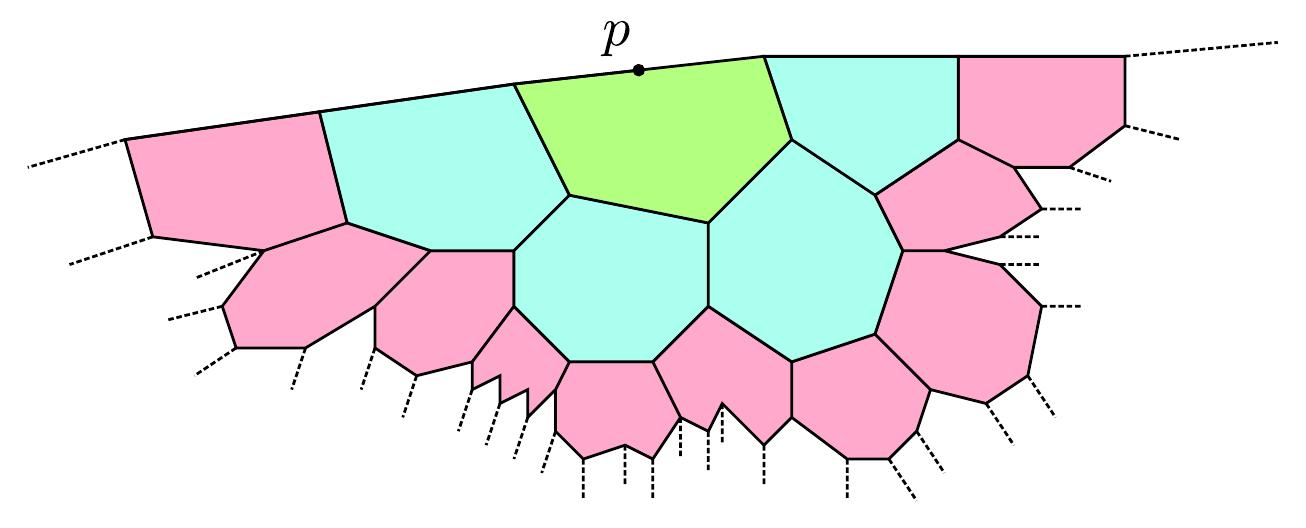}
		\end{center}
		\caption{Star neighborhoods of $B_0$}\label{fig-nbhd-growth}
	\end{figure}
	
%
%
	Suppose that $\cD_i$ is homeomorphic to a disc, which meets $\cP$ in a
	geodesic segment of $\gamma$, and that $\cD_{i+1}$ does not meet another side of $\cP$.
	We can extend the geodesic segment to edges on faces $B^{(i+1)}_-$ and $B^{(i+1)}_+$
	adjacent to $\cD_i$.
		
	Taking a small neighborhood $U$ of $\cD_i$ in $\St(\cD_i)$, there is a natural
	way to order the faces in $\St(\cD_i)\setminus\cD_i$ from
	$B^{(i+1)}_-=B^{(i+1)}_1$ to $B^{(i+1)}_{l_{i+1}} = B^{(i+1)}_+$, so that
	$B^{(i+1)}_p$ meets $B^{(i+1)}_q$ along an edge in $U\setminus\cD_i$ if and only if $|p-q|=1$.
	
	We will control the geometry of the diagrams $\cD_i$ using the following assumption.
	Let 
	\begin{itemize}
		\item[$A(i)$:] First, $\cD_i$ is homeomorphic to a closed disc which meets $\cP$ in a
		geodesic segment of $\gamma$.
			Second, $B^{(i)}_j$ has at least $d_{Ext}(\lambda)-2 \geq 2$ 
			edges in $\partial \cD_i$ that are internal in $\cD$, for $j=1$ and $j=l_i$,
			and $B^{(i)}_j$ has at least $d_{Int}(\lambda)-4\geq 3$ 
			internal (in $\cD$) edges in $\partial \cD_i$, for $1 < j < l_i$.
			Finally, there are at least four internal (in $\cD$) edges in $\partial \cD_i$.
	\end{itemize}
	We let $l_0 = 1$ and $B^{(0)}_1 = B_0$, and note that $A(0)$ holds.	
	
	The following lemma will provide the key induction step in our proof.
	\begin{lemma}
		Suppose that $A(i)$ holds.
		Then either $\cD_{i+1}$ meets another side of $\cP$, or $A(i+1)$ holds, with
		\[
			l_{i+1} \geq (l_i-2) \left\lfloor \tfrac1\lambda - 4 \right\rfloor.
		\]
	\end{lemma}
	\begin{proof}
		We assume that $\cD_{i+1}$ does not meet another side of $\cP$.
		Define $l_{i+1}$ and 
		$B^{(i+1)}_1, B^{(i+1)}_2, \ldots, B^{(i+1)}_l$
		as above.
		
		Let $\cD / \cD_i$ be the diagram formed from $\cD$ by combining $\cD_i$ into a single
		face $B_{\cD_i} \subset \cD/\cD_i$, and removing all vertices of 
		degree two.
		Even though this diagram has a face not labelled by a relator, it and its subdiagrams 
		will be helpful in the rest of the proof.
		
%
		
		\vspace{2mm}
		\noindent\textbf{Claim 1:} If $p\neq q$ then $B^{(i+1)}_p \neq B^{(i+1)}_q$.
		
		Suppose we have $B^{(i+1)}_p = B^{(i+1)}_q$, $p \neq q$. 
		We can assume that $q \geq p+2$, since Lemma~\ref{lem-onesixth-nice-faces}
		states that faces don't bump into themselves,
		and that $q-p$ is minimal.
		Let $\cD' \subset \cD/\cD_i$ be the subdiagram which includes
		$B_{\cD_i}$ and $B^{(i+1)}_p = B^{(i+1)}_q$ and all they enclose, 
		and remove all degree two vertices from it.
		
		This may reduce the number of edges of an interior face of $\cD'$
		below seven by removing a vertex at the junction of some
		$B^{(i)}_j$ and $B^{(i)}_{j+1}$ in $\partial \cD_i$.
		By assumption $A(i)$, these junctions are seperated by several
		edges in $\partial \cD_i$.
		So by the minimality of $q-p$, and construction of the diagrams $\cD_i$,
		no interior face of $\cD'$ can meet two of these junctions,
		and thus every interior face of $\cD'$ has at least six edges.
		
		Both $B_{\cD_i}$ and $B^{(i+1)}_p$ have at least one external edge in $\cD'$,
		and an internal edge from encompassing $B^{(i+1)}_{p+1}$.  
		Combining these results, by \eqref{eq-sixfb} applied to $\cD'$, we have
		\[ 
			6 \leq 2\sum_v (3-d(v)) + (4-i(B_{\cD_i}))+(4-i(B^{(i+1)}_p)) \leq 6.
		\]
		This implies that $B^{(i+1)}_p$ and $B_{\cD_i}$ do not meet along an (internal) edge,
		and that every vertex has degree three, which is impossible.
		
		\vspace{2mm}
		\noindent\textbf{Claim 2:} If $p < q$ and $B^{(i+1)}_p \cap B^{(i+1)}_q \neq \emptyset$, then 
		$B^{(i+1)}_p$ and $B^{(i+1)}_q$ meet at a single vertex in $\partial \cD_i$, or
		they meet along a single edge adjacent to $\cD_i$.
		(In the latter case, necessarily $q=p+1$.)
		
		Suppose not.  Then by Lemma~\ref{lem-onesixth-nice-faces}, we
		find a subdiagram $\cD' \subset \cD/\cD_i$ including
		$B_{\cD_i}$, $B^{(i+1)}_p$ and $B^{(i+1)}_q$ and all they encompass (which includes at least one face).
		By Claim 1 and assumption $A(i)$, every internal face of $\cD'$ has degree at least six.
		Again, \eqref{eq-sixfb} applies to show that
		\[
			6 \leq 2\sum_v (3-d(v)) + (4-i(B_{\cD_i}))+(4-i(B^{(i+1)}_p))+(4-i(B^{(i+1)}_q)),
		\]
		which is at most $9$.
		However, each of the three intersections between $B_{\cD_i}$, $B^{(i+1)}_p$ and $B^{(i+1)}_q$ contributes
		an additional $-2$ to the equation above, either through an internal edge or a vertex of degree four.
		This gives a contradiction.
		
%
		
%
		
		\vspace{2mm}
		\noindent\textbf{Claim 3:} The geodesic edge $\gamma$ of $\cP$
		which meets $\cD_i$ only meets $B^{(i+1)}_1 \cup \cdots \cup B^{(i+1)}_{l_{i+1}}$ along a
		single edge of $B^{(i+1)}_1$ and a single edge of $B^{(i+1)}_{l_{i+1}}$.
		
		By Lemma~\ref{lem-geodesicloop} we know that $\gamma$ meets
		$B^{(i+1)}_1$ and $B^{(i+1)}_{l_{i+1}}$ along a single edge.
		Suppose it also meets $B^{(i+1)}_p$, for some $1 < p < l_{i+1}$.
		It suffices to consider the case when
		$B_{\cD_i}$, $B^{(i+1)}_1$, $B^{(i+1)}_p$ and a geodesic segment of $\gamma$ enclose
		a subdiagram $\cD' \subset \cD/\cD_i$.  Again we remove all vertices of degree two.
		
		As above, every internal face of $\cD'$ has at least six internal edges, 
		and since $\gamma$ is a geodesic, every external face has at least four internal edges,
		with the possible exceptions of $B_{\cD_i}$, $B^{(i+1)}_1$, and $B^{(i+1)}_p$.
		Using similar arguments to the two claims above, we derive a contradiction from \eqref{eq-sixfb}.
		
		\vspace{2mm}
		\noindent\textbf{Claim 4:}
		$A(i+1)$ holds.
		
		The first assertion of $A(i+1)$ follows from Claims 1 and 3 above.
		The second and third assertions follow from Claim 2:
		The faces $B^{(i+1)}_1$ and $B^{(i+1)}_{l_{i+1}}$ have one external and at least
		$d_{Ext}(\lambda) - 2$ internal edges which lie in $\partial \cD_{i+1}$.		
		Likewise, the faces $B^{(i+1)}_2, \ldots, B^{(i+1)}_{l_{i+1}-1}$ 
		have at least $d_{Int}(\lambda)-4$ internal edges in $\partial \cD_{i+1}$.
		
		\vspace{2mm}
		Finally, we can use $A(i)$ to bound $l_{i+1}$.
		Every internal edge of $\partial \cD_i$ contributes a different face to $l_{i+1}$,
		with up to $l_i -1$ exceptions.  Thus
		\begin{align}
			l_{i+1} & \geq  2(d_{Ext}(\lambda) - 2) + 
				(l_i-2)(d_{Int}(\lambda)-4)-(l_i-1) \notag \\ \label{eq-face-rate}
				& \geq (l_i-2)(d_{Int}(\lambda)-5) 
				= (l_i-2) \left\lfloor \tfrac1\lambda - 4 \right\rfloor,
		\end{align}
		completing the proof of the lemma.
	\end{proof}
	
	Since $l_2 \geq 9$, by induction \eqref{eq-face-rate} implies that
	$l_i \geq 9$ for all $i$, and in fact that
	\begin{equation}
		l_i \gtrsim \left\lfloor \tfrac1\lambda - 4 \right\rfloor^i. \label{eq-face-rate2}
	\end{equation}
	We now return to bounding the number of faces in a chain joining $B_0$
	to one of the other sides of $\cP$.
	If there is no $(k+1)$-chain of faces, then every face in $\cD_k \setminus \cD_{k-1}$, with the
	exception of two, is an interior face of $\cD$.
	Therefore by \eqref{eq-face-rate2},
	\[
		F_I \geq l_k \gtrsim \left\lfloor \tfrac1\lambda - 4 \right\rfloor^k.
	\]
%
%
%
	On the other hand, by Lemma~\ref{lem-int-face-bound},
	\[
		F_I \leq \frac{3n-6}{d_{Int}(\lambda)-1} \leq n,
	\]
	thus for some $C = C(\lambda)$, we have
	\[
		k \log \lfloor \tfrac{1}{\lambda}-4 \rfloor \leq \log(n) + C. \qedhere
	\]
\end{proof}

\begin{proof}[Proof of Theorem~\ref{thm-sc-ac-link}]
	Proposition~\ref{prop-facesthin} shows that any geodesic $n$-gon $\cP$ is $\Delta$-slim, with
	\[
		\Delta = \frac{M}{2 \log \lfloor \tfrac{1}{\lambda}-4 \rfloor} \cdot \log(n) + C',
	\]
	where $M/2$ is the maximum diameter of a face, for $M = \max\{|r| \mid r \in R\}$,
	and $C' = 2MC$, with $C$ as above.  So
	the Cayley graph $\Gamma(G,S)$ is $AC_u(\kappa)$ with
	\[
		\kappa = - \frac{4}{M^2} \log^2 \lfloor\tfrac{1}{\lambda}-4 \rfloor.\qedhere
	\]
\end{proof}


\section{Building a round tree in the Cayley complex}\label{sec-round-tree}

Our final goal is to prove the following result.
\begin{theorem}\label{thm-sc-cdim-lower}
	Suppose $G = \langle S | R \rangle$ is a $C'(1/8 -\delta)$ presentation, 
	with $|S| = m \geq 2$ and $|R| \geq 1$, where $\delta \in (0, 1/8)$
	and $|r| \in [3/\delta, M]$ for all $r \in R$.
	Suppose further that for some $M^* \geq 12$, 
	every reduced word $u \in \langle S \rangle$ of length $M^*$
	appears at least once in some cyclic conjugate of some relator $r^{\pm 1}, r \in R$.
	Then for some universal constant $C>0$, we have
	\[
		\Cdim(\bdry G) \geq 1 + C\log(2m) \cdot \frac{M^*}{\log(M)}.
	\]
\end{theorem}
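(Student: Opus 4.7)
The plan is to build, inside the Cayley $2$-complex $\widetilde K$ of $G$, a subcomplex $\Theta$ which is quasi-isometric to one of Gromov's round trees in the sense of~\cite{Bou-95-cdim-hyp-build}. I would then translate the branching data of $\Theta$ into a lower bound on $\Cdim(\bdry G)$ by applying the Pansu-Bourdon lemma to the Cantor family of arcs which $\Theta$ produces in $\bdry G$. This refines Champetier's construction of a single cone/arc in the proof of Theorem~\ref{thm-champ-twelth-menger} to an entire fan of arcs indexed by the ends of an abstract branching tree.

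I would build $\Theta$ inductively. Starting from an initial relator $2$-cell $B_0$ through a basepoint $v_0$, suppose $\Theta_n$ has been built as a tree-like complex of relator $2$-cells. For each leaf face $B$ of $\Theta_n$, I select along the free portion of $\partial B$ disjoint geodesic sub-arcs of length $M^*$ labelled by distinct reduced words. The subword hypothesis provides, for each such word $w$, a relator $r_w$ containing $w$ as a subword, and I attach the corresponding $2$-cell $B_w$ along the $w$-arc. The reduced words of length $M^*$ number $\asymp(2m-1)^{M^*}$, yielding that many children of $B$. The hypotheses $C'(1/8-\delta)$ and $|r|\ge 3/\delta$ enter here to guarantee that each newly attached $B_w$ meets $\Theta_n$ in exactly the prescribed arc and protrudes from $\Theta_n$ by a definite proportion of its own perimeter; this is where the strengthening beyond Champetier's $C'(1/12)$ is used.

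Next I would verify that the inclusion $\Theta \hookrightarrow \widetilde K$ is a quasi-isometric embedding, so that $\bdry\Theta \hookrightarrow \bdry G$ and a visual metric on $\bdry G$ restricts to one on $\bdry\Theta$. The technical inputs are Lemma~\ref{lem-bigon-face} (to control reduced subdiagrams bounded by a geodesic and part of a face) and Lemma~\ref{lem-onesixth-nice-faces} (to rule out pathological face intersections). The resulting $\Theta$ then has the geometry of a round tree of branching $a \asymp (2m-1)^{M^*}$ per level, with the hyperbolic step $h$ between consecutive branching levels comparable to a relator's perimeter, giving a contraction factor $b$ with $\log b \asymp \log M$ measured against the visual metric.

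The final step is the Pansu-Bourdon combinatorial modulus lower bound for the boundary arc family of such a round tree, yielding
\[
\Cdim(\bdry G) \;\ge\; \Cdim(\bdry\Theta) \;\ge\; 1 + \frac{\log a}{\log b} \;\asymp\; 1 + \log(2m)\cdot\frac{M^*}{\log M},
\]
which is the desired conclusion up to the universal constant $C$. The main obstacle is the round-tree construction itself: making the combinatorial branching survive as a genuine quasi-isometric embedding with the correct metric scaling is delicate, and is exactly what forces the sharper small cancellation parameter $\tfrac{1}{8}-\delta$ and the relator length lower bound $|r|\ge 3/\delta$. The parenthetical $C'(1/11)$ remark follows by taking $\delta$ so that $1/8-\delta \ge 1/11$ and checking that the resulting bound $|r|\ge 3/\delta$ is either automatic or may be absorbed by shrinking $M^*$.
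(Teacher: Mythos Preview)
Your overall architecture --- build a round-tree subcomplex, verify it quasi-isometrically embeds, then apply the Pansu--Bourdon lemma --- matches the paper's. But the heart of the argument, the branching mechanism, is wrong as you have described it, and the error propagates directly into the final estimate.

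You write that along the free portion of $\partial B$ you select disjoint sub-arcs of length $M^*$, and then claim this yields $\asymp (2m-1)^{M^*}$ children of $B$. But the free boundary of $B$ has length at most $M$, so it contains at most $M/M^*$ disjoint arcs of length $M^*$; moreover each such arc carries a \emph{fixed} label (a subword of the relator for $B$), and the hypothesis only supplies one relator per word. So your construction as stated gives branching $a \lesssim M/M^*$, which yields $1 + \log a/\log b \leq 2$ and says nothing about $m$ or $M^*$. The count $(2m-1)^{M^*}$ is the number of reduced words of that length in the free group, not the number of arcs you can place on a single face boundary.

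The paper obtains the correct branching by a different mechanism. It does not attach new faces along sub-arcs of $\partial B$; instead, from evenly spaced points on the outer path $E$ it grows \emph{new geodesic segments outward into the Cayley graph}, using Champetier's extension lemmas (Lemmas~\ref{lem-champ-extend-geo} and~\ref{lem-champ-relator-parts}) to guarantee roughly $2m-2$ choices at each of about $M^*/2$ steps. Two adjacent outward geodesics together with the short segment of $E$ between them form a word of length at most $M^*$, and only then is the subword hypothesis invoked to cap this tripod with a relator face. Thus the exponential branching $|T| \asymp (2m)^{CM^*}$ lives in the free geodesic extensions, not in the face boundaries. You also need an argument (Lemma~\ref{lem-A-good-geodesics} in the paper) that every geodesic from a point on the new outer edge back to $1$ must pass through one of the attachment points; this is what forces the careful length-three initial step avoiding the configuration of Lemma~\ref{lem-champ-relator-parts}, and it is where $C'(1/8-\delta)$ and $|r|\geq 3/\delta$ are actually used. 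Your proposal does not account for this.
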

Note that all relators will have size at least $8(M^*-1) \geq 88$.
In fact, if we assume that we have a $C'(1/11)$ presentation ($\delta = 3/88$), then the
assumption $|r| \geq 3/\delta$ is redundant: all relators have size
at least $11(12-1) = 121 \geq 88 = 3/\delta$.

We split the proof of the theorem into two parts.
In this section, we build a round tree in the Cayley
complex $\Gamma^2 = \Gamma^2(G,S,R)$, whose
branching is controlled by the size of $M^*$ relative
to $M$.
In Section~\ref{sec-lower-cdim} we use a lemma of Bourdon
to give the lower bound for the conformal dimension of the boundary.

We recall the following standard definition.
\begin{definition}
	The Cayley complex $\Gamma^2(G,S,R)$ of a finitely presented group
	$G = \langle S | R \rangle$ is the universal cover of the complex $X$,
	where $X$ has a bouquet of $|S|$ oriented circles as a $1$-skeleton, each
	labelled with a generator from $S$, and there are $|R|$ discs glued in with
	boundary labels from the corresponding relators in $R$.
\end{definition}
Note that the $1$-skeleton of $\Gamma^2(G,S,R)$ is the Cayley graph $\Gamma^1(G,S)$.

\subsection{Preliminary lemmas}

We need the following two lemmas of Champetier.
We translate his proofs here for the reader's convenience.

\begin{lemma}[Champetier {\cite[Lemma 4.19]{Cha-95-rand-grps}}]\label{lem-champ-extend-geo}
	Consider a $C'(1/6)$ presentation of a group $G = \langle S | R \rangle$ 
	with Cayley graph $\Gamma(G,S)$, and all relators of length at least seven.
	
	For every point $a \in \Gamma$, there are at most two distinct $s \in S \cup S^{-1}$ that
	satisfy $d(1, as) \leq d(1, a)$.  In other words, any geodesic from $1$ to $a$
	can be extended to any of the neighbors of $a$, with at most two exceptions.
\end{lemma}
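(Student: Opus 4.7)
Plan: Argue by contradiction. Assume three distinct $s_1, s_2, s_3 \in S \cup S^{-1}$ each satisfy $d(1, as_i) \leq n := d(1, a)$. Fix a geodesic $w$ from $1$ to $a$ and, for each $i$, a geodesic $\gamma_i$ from $1$ to $as_i$. The trivial word $w \cdot s_i \cdot \gamma_i^{-1}$ bounds a reduced van Kampen diagram $D_i$ in $\Gamma^2$; since $w$ ends at $a$ while $\gamma_i$ ends at $as_i$, the edge $s_i$ lies on the boundary of some face $F_i$ of $D_i$. I apply Lemma~\ref{lem-bigon-face} with $p = 1$, $u = a$, $v = as_i$, and the single edge $[a,as_i]$ as the ``part of a face $B = F_i$'' to give $D_i$ the bigon-chain form of Figure~\ref{fig-bigon}, terminating in $F_i$.

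By Lemma~\ref{lem-onesixth-nice-faces}, $F_i$ has simple boundary, so at the vertex $a$ exactly two edges of $\partial F_i$ are incident: $[a, as_i]$ and---forced by $F_i$'s position at the end of the bigon chain---the last edge of $w$ ending at $a$. A coincidence $F_i = F_j$ in $\Gamma^2$ would force the unique pair of incident edges at $a$ on this face to simultaneously contain $[a, as_i]$ and $[a, as_j]$, giving $s_i = s_j$. So $F_1, F_2, F_3$ are three distinct faces of $\Gamma^2$, each containing the last edge of $w$ on its boundary.

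To derive the contradiction, decompose $\partial F_i$ into $k_i \geq 1$ edges of $w$ ending at $a$, the edge $[a, as_i]$, $l_i \geq 1$ edges of $\gamma_i$ ending at $as_i$, and a closing internal piece of length $p_i < |r_i|/6$; this gives the refined bound $k_i + l_i + 1 > 5|r_i|/6$. For each pair, $F_i$ and $F_j$ share the common suffix of $w$ of length $\min(k_i, k_j)$ as a piece, forcing $\min(k_i, k_j) < \min(|r_i|, |r_j|)/6$. Iterating these pairwise inequalities together with the three per-face bounds, and using the hypothesis $|r_i| \geq 7$, is intended to force at least one piece bound to fail, yielding the contradiction.

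The principal obstacle is this last combinatorial step: naive Greendlinger-style bounds on $k_i + l_i$ alone leave the three-face configuration numerically feasible for arbitrary relator lengths, so one needs the stronger $5|r_i|/6$ inequality obtained from the internal piece bound $p_i < |r_i|/6$, combined carefully with the three $w$-side piece bounds. The degenerate case in which some $D_i$ contains only a single face (so $r_i$ contains all of $w$ as a subword, of length $n = |r_i|/2$) must be treated separately; in that case the overlap with any other $r_j$ immediately exceeds $|r|/6$, giving the contradiction directly.
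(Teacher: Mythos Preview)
Your setup matches the paper's, but the ``principal obstacle'' you flag is precisely where the argument has a real gap. The inequalities you list --- $k_i + l_i + 1 > 5|r_i|/6$ together with the pairwise piece bounds $\min(k_i,k_j) < \min(|r_i|,|r_j|)/6$ --- do \emph{not} by themselves force a contradiction, even with three faces and $|r_i| \geq 7$. For instance $|r_i| = 100$, $k_i = 10$, $l_i = 80$, $p_i = 9$ satisfies everything you wrote. The missing ingredient is the geodesic bound $l_i \leq |r_i|/2$: the $\gamma_i$-arc of $\partial F_i$ is a geodesic subpath of a cycle of length $|r_i|$, hence at most half of it. Combining this with your decomposition gives
\[
k_i = |r_i| - 1 - l_i - p_i > |r_i| - 1 - \tfrac{|r_i|}{2} - \tfrac{|r_i|}{6} = \tfrac{|r_i|}{3} - 1 > \tfrac{|r_i|}{6},
\]
the last inequality using $|r_i| \geq 7$. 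Now for any two faces the overlap along $w$ has length $\min(k_i,k_j) > |r_i|/6$ for the smaller one, contradicting $C'(1/6)$ directly. Two faces suffice; the three-face combinatorics you anticipate never arise.

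This is exactly the paper's argument, organised slightly differently: it fixes the step $b$ back along $\gamma_1 = w$ as one of the bad neighbours, and for any \emph{other} bad neighbour $c$ derives the face $F$ with $|w \cap \partial F| > |r|/6$; two such $c, c'$ then give faces sharing too long a piece along $\gamma_1$, so they coincide and $c = c'$. Your distinctness argument via the pair of edges at $a$ is correct but unnecessary once you have $k_i > |r_i|/6$. Note also that if one of your $s_i$ is the step back along $w$ then $D_i$ has \emph{no} faces at all, not a single face as in the ``degenerate case'' you describe; the paper avoids this by singling out $b$ from the start and only forming diagrams for the remaining bad neighbours.
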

Of course, for $a \in G, a \neq 1$, there exists $s \in S \cup S^{-1}$ so that
$d(1,as) = d(1,a)-1$.
We denote a geodesic between $p$ and $q$ by $[p,q]$.
\begin{proof}
	Let $\gamma_1 = [a,1]$, and let $b \in \gamma_1$ satisfy $d(a,b)=1$.
	Suppose there is some $c \in \Gamma$, $c \neq b$, so that $d(a,c)=1$ and
	$d(1,c) \leq d(1,a)$.  Let $\gamma_2 = [c,1]$.
	Note that $a,b$ do not lie in $\gamma_2$.
	
	Let $\cD$ be a reduced diagram for the geodesic triangle $\gamma_1, [a,c], \gamma_2$.
	By Lemma~\ref{lem-bigon-face}, this diagram has
	a face labelled by some $r\in R$ with at most one interior edge, containing $a,b,c$ in
	its boundary.  
	Let $w_i = \gamma_i \cap \partial r$, for $i=1,2$.
	Since $w_i$ are geodesics, and the interior edge has length at most $|r|/6$,
	we have $|w_i| \geq |r|/2-|r|/6 -1 > |r|/6$, for $i=1,2$.
	
	Suppose now that there is another point $c'$ satisfying the same conditions as $c$.
	Then, as before one builds a geodesic triangle from $a,b,c'$, and finds a relator
	$r'$ so that the initial segment of $\gamma_1$ which overlaps $r'$ has length
	at least $|r'|/6$.
	Thus, by the $C'(1/6)$ condition, $r$ and $r'$ are the same relator, and so
	$c=c'$.
\end{proof}

\begin{lemma}[Variation of Champetier {\cite[Lemma 4.20]{Cha-95-rand-grps}}]\label{lem-champ-relator-parts}
	Consider a $C'(1/8)$ 
	presentation of a group $G = \langle S | R \rangle$ with Cayley graph $\Gamma = \Gamma(G,S)$.
	
	For every $u' \in \Gamma$, there is at most one 
	$u \in \Gamma$ so that $d(1,u)=d(1,u')+d(u',u)=d(1,u')+3$,
	and so that a geodesic $\gamma_u = [u,1]$ starts with a 
	subword of some relator $r \in R$ of length greater than $|r|/4+3$.
\end{lemma}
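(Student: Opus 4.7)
Argue by contradiction: suppose $u_1 \neq u_2$ both satisfy the hypothesis, with associated relators $r_1, r_2 \in R$ and geodesics $\gamma_i = [u_i, 1]$. Since $d(1, u_i) = d(1, u') + 3$, we may choose each $\gamma_i$ to pass through $u'$. Let $a_i$ denote the initial subword of $\gamma_i$ that is also a subword of $r_i$, so $|a_i| > |r_i|/4 + 3$; its first three edges trace $[u_i, u']$, while its remainder $a_i'$, of length greater than $|r_i|/4$, lies along $\gamma_i$ past $u'$ toward $1$. The aim is to extract a common subword of two distinct cyclic conjugates of relators in $R \cup R^{-1}$ of length exceeding $\min(|r_1|, |r_2|)/8$, contradicting the $C'(1/8)$ condition.

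Split into two cases according to how $\gamma_1$ and $\gamma_2$ behave past $u'$. In the \emph{degenerate} case, the two geodesics agree past $u'$ for at least $\min(|r_1|, |r_2|)/8$ edges. Then $a_1'$ and $a_2'$ spell out identical labels over that segment, yielding a common subword of $r_1$ and $r_2$. Either the two occurrences correspond to distinct cyclic conjugates of relators in $R \cup R^{-1}$ (automatic when $r_1 \neq r_2$), giving the desired piece, or $r_1 = r_2$ with matching cyclic conjugates; in the latter case the labels on the 3-edge paths $[u_i, u']$ must also agree, and writing $w$ for this common label one obtains $u_1 = u' \cdot w^{-1} = u_2$, contrary to assumption.

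In the \emph{non-degenerate} case, $\gamma_1$ and $\gamma_2$ diverge at some point $p$ with $d(u', p) < \min(|r_1|, |r_2|)/8$; past $p$, each $a_i'$ continues along $\gamma_i$ for more than $|r_i|/8$ further edges in different directions. Form the reduced bigon from $p$ to the next reconvergence point (or to $1$). By Lemma~\ref{lem-thin-triangles}, this bigon is a chain of faces of the form in Figure~\ref{fig-bigon}. The face $F$ at $p$ has an external arc along $\gamma_1$ coinciding with part of $a_1'$, hence with an arc of the 2-cell $B_1 \subset \Gamma^2$ carrying $r_1$ along $a_1$; if $F \neq B_1$, this arc is a piece between $r_1$ and the label of $F$, whereas if $F = B_1$, then $B_1$'s boundary also runs along $\gamma_2$, producing a piece against $r_2$. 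Applying Lemma~\ref{lem-bigon-face} to the sub-diagrams bounded by $[u_i, u']$, arcs of $B_i$, and suitable return geodesics, one extracts a piece of length exceeding $|r_i|/8$, yielding the desired contradiction.

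The principal obstacle is the non-degenerate case: one must construct a reduced diagram containing $B_1$, $B_2$, and the entire bigon chain simultaneously, and verify at each step that a coincidence of boundary arcs corresponds to genuinely distinct cyclic conjugates rather than two placements of the same one. A natural inductive tactic is to first rule out long pieces along the bigon chain, forcing the chain to be very short, and then use this brevity to show that $\gamma_1$ and $\gamma_2$ must in fact agree for a long stretch past $u'$, reducing to the degenerate case already handled.
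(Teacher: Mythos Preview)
Your outline parallels the paper's argument, but there is a real gap at the outset, and your non-degenerate case is left as a sketch whose proposed tactic is more elaborate than what is actually needed.

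The claim ``we may choose each $\gamma_i$ to pass through $u'$'' is not free. The hypothesis hands you a \emph{specific} geodesic $\gamma_i=[u_i,1]$ whose initial segment of length greater than $|r_i|/4+3$ reads a subword of $r_i$; the equality $d(1,u_i)=d(1,u')+3$ only guarantees that \emph{some} geodesic from $u_i$ to $1$ passes through $u'$, not that $\gamma_i$ does. If the first three edges of $\gamma_i$ lead to some other vertex $u''\neq u'$ at the same level, your decomposition $a_i=[u_i,u']\cup a_i'$ collapses immediately. The paper isolates exactly this issue as its Case~1: when $u'\notin\gamma_u$, one compares $\gamma_u$ with $\gamma_{u'}=[u,u']\cup[u',1]$ as a geodesic bigon, uses Lemma~\ref{lem-thin-triangles} to find the face $r'$ at the split point, and observes that its exterior edge along $\gamma_u$ overlaps the given $r$-segment in more than $|r|/8$, forcing $r'=r$. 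This in turn shows $\gamma_{u'}$ itself begins with a long subword of $r$, so one may replace $\gamma_u$ by $\gamma_{u'}$ without loss.

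Once both geodesics pass through $u'$, your degenerate/non-degenerate dichotomy matches the paper's Case~2, but the paper finishes the non-degenerate case directly, with no iterated diagram-building and no appeal to Lemma~\ref{lem-bigon-face}. Let $w$ be the common segment of $\partial r_1$ and $\partial r_2$ along $\gamma_1\cap\gamma_2$ past $u'$, and let $r''$ be the first bigon face at the split point $p$. Each exterior edge of $r''$ has length greater than $3|r''|/8$, while each $r_i$ continues past $p$ for more than $|r_i|/4-|w|$ edges. If $|w|<|r_i|/8$ for both $i$, these overlaps force $r_1=r_2=r''$ as a single face, which contradicts $d(1,u_i)>d(1,u')$; otherwise $|w|\geq|r_i|/8$ for some $i$ already gives the piece identifying $r_1$ with $r_2$, hence $u_1=u_2$. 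Your ``inductive tactic'' of shortening the bigon chain is unnecessary: only the single face $r''$ at $p$ is ever used.
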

\begin{proof}
	Suppose $u\in\Gamma$ is such a point, and $\gamma_u$ is such a geodesic.
	
	\vspace{2mm}
	{\noindent \textbf{Case 1:}} $u' \notin \gamma_u$.
	
	Then the two geodesics $\gamma_u$ and $\gamma_{u'} = [u,u']\cup [u',1]$ form a geodesic
	bigon that splits at a vertex of $[u,u']$, and so by Lemma~\ref{lem-thin-triangles}
	there is a relator $r' \in R$ whose boundary
	meets both $\gamma_u$ and $\gamma_{u'}$ from the point they split
	in a segment of length at least $|r'|/4$.
	Thus $r$ and $r'$ are the same relators, and so
	the only possibility is that $\gamma_u$ and $\gamma_{u'}$ split at $u$ and each
	begin with $|r|/4+3$ of $r$ and its inverse respectively.
	
	\vspace{2mm}
	{\noindent \textbf{Case 2:}} $u' \in \gamma_u$.
	
	Suppose we have two such points $u,v$ with corresponding geodesics $\gamma_u, \gamma_v$,
	and relators $r, r'$. 
	By Case 1, we can assume that these geo\-des\-ics both pass through $u'$,
	and so $\partial r$ and $\partial r'$ will meet along a subword
	$w$ of $\gamma_u \cap \gamma_v$ that includes $u'$, 
	before $\gamma_u,\gamma_v$ split at some point $p$ around a relator $r''$.
	As $\gamma_u, \gamma_v$ are both geodesics, after $p$ they have
	to include at least $3|r''|/8$ of the relator $r''$.
	
	If $|w| < |r|/8$ and $|w| < |r'|/8$, then $r$ and $r'$ will both meet
	$r''$ along at least one eighth of their length, and so $r,r',r''$ are all the
	same relator, and thus $d(u,1)<d(u',1)$, a contradiction.
	Thus $|w| \geq |r|/8$ or $|w| \geq |r'|/8$, and so $r$ and $r'$ are the same relator,
	and $u=v$ as desired.	
\end{proof}

%

\subsection{Building a round tree}

Suppose $Y$ is a two dimensional complex with a CAT$(\kappa)$ metric, $\kappa<0$,
and there is an $S^1$ action on $Y$ that has a unique fixed point.
If, additionally, there is a tree embedded in $Y$ that meets every $S^1$ orbit in a single
point, then we say $Y$ is a \emph{round tree} \cite[7.C$_3$]{Gro-91-asymp-inv}.

Our goal in this section is to build a 2-complex $A$ which
is topologically embedded in the Cayley complex $\Gamma^2$, 
and whose 1-skeleton is a quasi-convex subset of the Cayley graph $\Gamma^1$.
The complex $A$ will be quasi-isometric to a sector of a round tree; 
we abuse terminology and simply refer to $A$ as a round tree.

The ideas in this section are inspired by the arguments of Champetier~\cite{Cha-95-rand-grps}
 and Bourdon~\cite{Bou-95-cdim-hyp-build}.
However, unlike Champetier, we build more than just a single (or finite number)
of arcs in the boundary.
Unlike Bourdon, we do not have a particular nice hyperbolic building to work in.


We build the round tree inductively.
The round tree at step $n$ is denoted by $A_n$.  
Its branching is controlled by the index set $T = \{1, \ldots, 3\cdot (2m-2)^{K-3}\}$,
where $K = \lfloor M^*/2 - 3 \rfloor$.

Each complex $A_n$ is a union of complexes $A_{\ba_n}$ indexed by $\ba_n \in T^n$,
homeomorphic to a closed disc, which can each be thought of as a triangular region
with left edge a geodesic $L_{\ba_n}$ from $1$, right edge a geodesic $R_{\ba_n}$ from $1$, and 
outer edge a path $E_{\ba_n}$, where $\ba_n \in T^n$.
The left tree is $L_n = \bigcup L_{\ba_n}$, and the right tree is $R_n = \bigcup R_{\ba_n}$,
where the unions are over all $\ba_n$ as above.

\subsubsection{Initial step}
Let $L_\emptyset=[1,s_1], R_\emptyset=[1,s_2]$ be two distinct edges 
from the identity in $\Gamma$.
Combined, $L_\emptyset$ and $R_\emptyset$ give the reduced word $w = s_1^{-1} s_2$ of length $2$.
Choose some relator $r \in R$ which contains $w$ as a subword, and let
$A_0$ be the face corresponding to $r$ in $\Gamma^2$ 
which contains $w$ as a sub-word in its boundary.
Let $E_\emptyset$ be the path of length $|r|-2$ joining $s_1$ to $s_2$ along $\partial A_0$.

\subsubsection{Inductive step}
Assume we have built $A_n$.  Let us fix $\ba_n = (a_1, \ldots, a_n) \in T^n$, and
use the notation $E = E_{\ba_n} \subset \partial A_{\ba_n}$ for the peripheral path 
joining the endpoints of $L_{\ba_n}$ and $R_{\ba_n}$.

Consider the function $d(1,\cdot)$ along $E$.
By induction, this distance is always at least $n$,
and strict local minima are separated by a path of length at least $50$.
(This follows from the fact that every relator has length at least $88$, and
Lemma~\ref{lem-A-good-geodesics} below.)
At points $p \in E$ that are not strict local minima for $d(1,\cdot)$, there
is at least one generator $s \in S$ that leaves $E$ and extends the distance
to the identity by one, i.e., $d(1,ps) = d(1,p)+1$, by Lemma~\ref{lem-champ-extend-geo}.
(In fact there are at least $2m-3$ such extensions.)

We can split the path $E$ into segments of length $6$ centered on local minima,
and of length $3$ or $4$ in-between.  For each endpoint $z$ of the segments we have
an edge that leaves $E$ and extends the distance to the identity by one.
This can be further extended two more steps to give four points at a distance
$d(1,z)+3$ from the identity.
By Lemma~\ref{lem-champ-relator-parts} at least three of these points will not
satisfy the conclusion of the lemma. (This will be useful to us later in the proof.)
We then extend geodesics from each of these three points $K-3$ times using Lemma~\ref{lem-champ-extend-geo},
branching $2m-2$ times at each step.
This gives us $|T|=3\cdot (2m-2)^{K-3}$ distinct points at a distance $d(1,z)+K$ from the identity.

Now for each $a_{n+1} \in T$, we have a corresponding geodesic of length $K$ leaving the endpoints
of each segment in $E$.  Adjacent paths, and the segment between them, concatenate
to give a path of length at most $K+6+K \leq M^*$, so there is some relator having
this word as a subpath.  Add these faces to $A_{\ba_n}$ to define $A_{\ba_{n+1}}$,
where $\ba_{n+1} = (a_1, \ldots, a_n, a_{n+1})$.
We let $L_{\ba_{n+1}}$ be the union of $L_{\ba_n}$ and the path of length $K$ extending from its endpoint
corresponding to $a_{n+1}$.  We define $R_{\ba_{n+1}}$ likewise.
The outer edges of the faces in $A_{\ba_{n+1}} \setminus A_{\ba_{n}}$
(that is, the portion of their boundaries not in $E$,
one of the geodesics of length $K$, or an adjacent face) 
concatenate to give a path $E_{\ba_{n+1}}$ joining
the endpoints of $L_{\ba_{n+1}}$ and $R_{\ba_{n+1}}$ together.
Part of this process is illustrated in Figure~\ref{fig-build-A}.
\begin{figure}
	\begin{center}
	\includegraphics[width=0.9\textwidth]{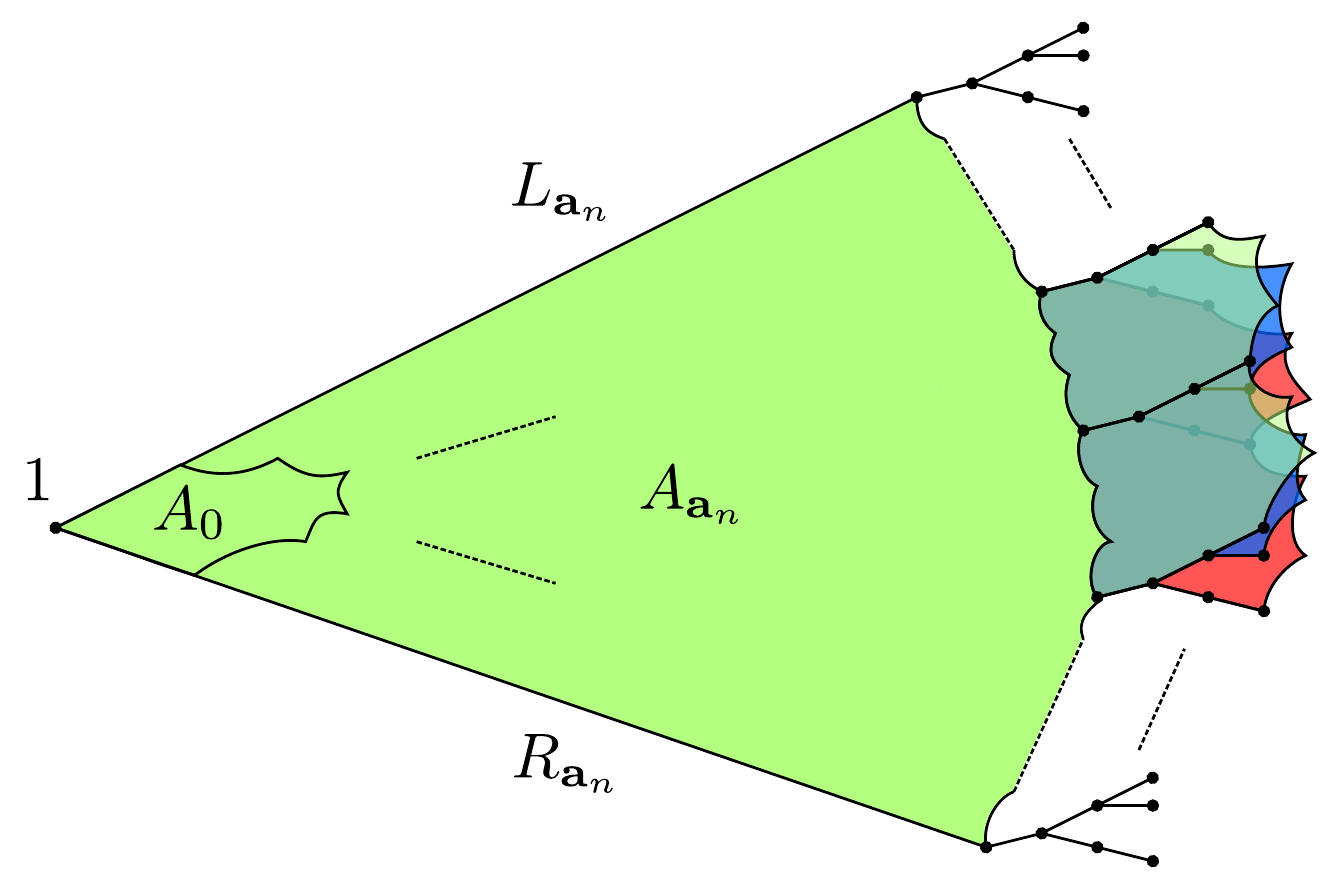}
	\end{center}
	\caption{Building $A_{\ba_{n+1}}$}\label{fig-build-A}
\end{figure}

We show that $E_{\ba_{n+1}}$ does not get closer than $n+1$ to the identity in $\Gamma$.
To be precise:
\begin{lemma}\label{lem-A-good-geodesics}
	Suppose $u',v' \in E_{\ba_{n}}$ are consecutive endpoints of segments,
	$u,v \in E_{\ba_{n+1}}$ are the corresponding points in $E_{\ba_{n+1}}$ (after simplifying
	the path), and $\gamma_{uv} \subset E_{\ba_{n+1}}$ the path joining them, coming from 
	some relator $r \in R$.
	We show that any geodesic from $p\in \gamma_{uv}$ to the identity must pass through
	$u$ or $v$, and include the corresponding sub-path of $\gamma_{uv}$.
\end{lemma}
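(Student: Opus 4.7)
The plan is to argue by contradiction. Let $\sigma$ be any geodesic from $p$ to $1$. By Lemma~\ref{lem-geodesicloop}, the intersection $\sigma \cap \partial r$ is a single connected arc; since $p \in \sigma \cap \partial r$, this is an initial sub-arc of $\sigma$ starting at $p$ and terminating at some point $q$ at which $\sigma$ leaves $\partial r$ for good. If $q \in \{u,v\}$ the lemma follows, so suppose for contradiction that $q$ lies strictly in the interior of $\gamma_{uv}$, whence $\sigma \cap \partial r \subseteq \gamma_{uv}$.

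Let $\sigma_q$ be the sub-geodesic of $\sigma$ from $q$ to $1$. Let $\tau_u$ and $\tau_v$ denote the tree geodesics from $u$ and $v$ to $1$ supplied by the construction: each begins with the length-$K$ arc along $\partial r$ (from $u$ to $u'$, respectively from $v$ to $v'$) and then exits $\partial r$ at $u'$ (resp.\ $v'$) to enter the previously built tree. Let $\eta_u$ (resp.\ $\eta_v$) denote the sub-arc of $\gamma_{uv} \subset \partial r$ from $u$ (resp.\ $v$) to $q$, of length $\ell_u^*$ (resp.\ $\ell_v^*$); note that $\ell_u^* + \ell_v^* = |\gamma_{uv}| = |r| - L_{uv}$, where $L_{uv} = 2K + (\text{segment length}) \geq 2K$.

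I would now apply Lemma~\ref{lem-bigon-face} to the reduced van Kampen diagrams $\cD_u$ and $\cD_v$ bounded by the loops $\sigma_q \cdot \tau_u^{-1} \cdot \eta_u^{-1}$ and $\sigma_q \cdot \tau_v^{-1} \cdot \eta_v^{-1}$, respectively. Each of these loops consists of two geodesics and one arc on the single face $r$, so each diagram is a chain of faces ending at $r$. The boundary of $r$ in $\cD_u$ decomposes as: the external arc $\eta_u$ of length $\ell_u^*$; an edge on $\tau_u$ of length exactly $K$ (since by Lemma~\ref{lem-geodesicloop} the intersection $\tau_u \cap \partial r$ is the single constructed length-$K$ arc from $u$ to $u'$); a degenerate edge on $\sigma_q$ of length zero, because $\sigma_q$ leaves $\partial r$ at $q$ and never returns; and one interior edge to the neighboring face in the chain, of length strictly less than $(\tfrac{1}{8}-\delta)|r|$ by the $C'(\tfrac{1}{8}-\delta)$ hypothesis. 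Summing these lengths to $|r|$ yields
\[
\ell_u^* > \left(\tfrac{7}{8}+\delta\right)|r| - K,
\]
and by the symmetric analysis in $\cD_v$, $\ell_v^* > (\tfrac{7}{8}+\delta)|r| - K$.

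Adding these inequalities and using $\ell_u^* + \ell_v^* \leq |r| - 2K$ gives
\[
|r| - 2K \;\geq\; \ell_u^* + \ell_v^* \;>\; \left(\tfrac{7}{4}+2\delta\right)|r| - 2K,
\]
i.e.\ $0 > (\tfrac{3}{4}+2\delta)|r|$, which is impossible. Hence $q \in \{u,v\}$, which is exactly the claimed conclusion. The main point requiring careful verification is the identification of $\tau_u \cap \partial r$ with the length-$K$ arc from $u$ to $u'$: this relies on Lemma~\ref{lem-geodesicloop} together with the inductive structure of the round-tree construction, which ensures that the continuation of $\tau_u$ past $u'$ into the previously built tree departs from $\partial r$ at $u'$ rather than proceeding along the segment of $E_{\ba_n}$.
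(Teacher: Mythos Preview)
Your approach is genuinely different from the paper's: you try to derive a contradiction from a symmetric length count on both the $u$-side and the $v$-side, never invoking Lemma~\ref{lem-champ-relator-parts}. The paper instead works on one side only and obtains its contradiction precisely from Lemma~\ref{lem-champ-relator-parts} --- this is the ``useful later'' application flagged in the inductive construction. Your argument, however, has a real gap.

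The central problem is the application of Lemma~\ref{lem-bigon-face}. That lemma's hypothesis is that $B$ is a $2$-cell \emph{of the reduced diagram} $\cD$. A reduced van Kampen diagram $\cD_u$ for the loop $\sigma_q\cdot\tau_u^{-1}\cdot\eta_u^{-1}$ need not contain $r$ as a face at all: $\eta_u$ is a subword of $r$, but the face of $\cD_u$ carrying $\eta_u$ on its boundary could be labelled by any relator with that subword. So your assertion ``each diagram is a chain of faces ending at $r$'' is exactly what requires proof, and without it the decomposition of $\partial r$ you write down (external $\eta_u$, length-$K$ arc on $\tau_u$, length-$0$ arc on $\sigma_q$, one short interior piece) is unjustified. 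The paper meets the analogous difficulty by explicitly gluing the $2$-cell $r$ onto its triangle diagram along $[p,v]\cup[v,v']$; it then identifies the neighbouring face $r'$, notes that $r'$ carries the first edge of $\gamma_{1p}$ at $p$ (which is off $\partial r$ by hypothesis) so that $r'\neq r$ and the glued diagram is still reduced, and finally derives the contradiction from Lemma~\ref{lem-champ-relator-parts}: the chain forces $[1,v']_{\gamma_{1v}}$ to carry at least $|r'|/4$ of $r'$, so the geodesic to $1$ through the chosen point three steps above $v'$ begins with more than $|r'|/4+3$ of $r'$, contradicting the construction's avoidance of the one ``bad'' direction. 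Your symmetric count has no replacement for this step; even after gluing $r$ into both $\cD_u$ and $\cD_v$, nothing you wrote excludes the possibility that on one side the face across the glued arc is another copy of $r$, in which case that side's inequality $\ell^* > (\tfrac{7}{8}+\delta)|r|-K$ simply fails and the summed contradiction collapses.

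Two smaller issues. First, Lemma~\ref{lem-geodesicloop} is a statement about a face and a geodesic \emph{inside a fixed reduced diagram}, not about intersections in the Cayley graph, so your opening sentence does not follow from it as stated. Second, as you yourself flag, $\tau_u$ may continue along the segment $[u',v']\subset\partial r$ beyond $u'$ before leaving $\partial r$ (it must head toward one of the previous-level endpoints along $E_{\ba_n}$), so $|\tau_u\cap\partial r|$ can be $K+|[u',v']|$ rather than $K$. These are fixable, but the missing justification that $r$ sits in the diagram, together with the absence of Lemma~\ref{lem-champ-relator-parts}, is not.
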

\begin{proof}
	Suppose some geodesic $\gamma_{1p}$ joins $p$ to the identity without passing through
	$u$ or $v$.  
	We can assume that the edge of $\gamma_{1p}$ adjacent to $p$ is not in $\gamma_{uv}$.
	We can also assume that $d(v',p) \leq d(u', p)$.
	Let $\gamma_{1v}$ be the geodesic path joining $v$ to $1$ (through $v'$).
	
	Consider the closed path formed by $\gamma_{1p},\gamma_{1v},[p,v]_{\gamma_{uv}}$, and
	the associated reduced diagram.
	One can glue on a face to this diagram, labelled with the relator $r$, along
	$[p,v]_{\gamma_{uv}}$ and part of $\gamma_{1v}$.
	Notice that in this diagram, the $r$ face and the face containing $1$ 
	are the only two with exterior edges that are not geodesics.
	Therefore, by Lemma~\ref{lem-bigon-face},
	these faces each have one interior edge, and the diagram has the standard 
	form described in the lemma.
	
	The face adjacent to $r$ is labelled by a relator $r'$, which contains
	$[p,v]_{\gamma_{uv}}$ in its boundary,
	and also the edge of $\gamma_{1p}$ adjacent to $p$, unlike $r$.
	So the relators $r$ and $r'$ must be distinct.
	Thus their overlap is at most $|r'|/8$, and includes 
	$[v',v]_{\gamma_{1v}} \cup [p,v]_{\gamma_{uv}}$.
	Since $\gamma_{1p}$ and $\gamma_{1v}$ are both geodesics,
	$[1,v']_{\gamma_{1v}}$ must contain at least
	$|r'|/2 - 2|r'|/8 = |r'|/4$ of the relator $r'$, which
	contradicts the choice of the paths $[v',v]$.
\end{proof}

\subsubsection{Properties of $A$}
We have built an infinite polygonal complex $A = \bigcup_{n\in\N} A_n$.
It is the union of planar complexes $A_{\ba} \subset A$ indexed by
$\ba = (a_1, a_2, \ldots) \in T^\N$, given by
$A_\ba = \bigcup_{n\in\N} A_{(a_1, \ldots, a_n)}$.

Each $A_{\ba}$ will carry a CAT($-1$) metric, however $A$ may not since
the links of the vertices $v'$ as above have simple closed paths of length
two (created by the $|T|$ different faces all joined along their edges at $v'$).
Before we consider different metrics on $A$, we need to understand how it sits inside $\Gamma^2$.

The complex $A$ was built abstractly, but with an obvious natural 
polygonal immersion $i:A \ra \Gamma^2$.  Denote the $1$-skeleton of $A$ by $A^1$.

\begin{lemma}\label{lem-A-top-embed}
	The map $i:A \ra \Gamma^2$ is a topological embedding.
	
	More precisely, for every $p \in A$, every geodesic joining
	$i(p)$ to $i(1)=1$ in $\Gamma^1$ is the image under $i$ of a geodesic 
	joining $p$ to $1$ in $A^1$.
\end{lemma}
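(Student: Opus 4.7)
The plan is to prove the lemma by induction on $n$, establishing the stronger hypothesis $H(n)$: for every vertex $p$ of $A_n$, every geodesic in $\Gamma^1$ joining $i(p)$ to $1$ equals $i(\gamma)$ for some geodesic $\gamma$ in $A^1$ from $p$ to $1$ lying entirely inside $A_n^1$. Granted $H(n)$ for every $n$, I would deduce injectivity of $i$ on vertices as follows: if $i(p)=i(q)$, pick any common $\Gamma^1$-geodesic to $1$ and lift it via $H(n)$ to $A^1$-geodesics starting at $p$ and at $q$; since $i$ is a graph immersion (injective on edge-links, as edges of $A$ inherit distinct generator labels at each vertex), reading off edges of the lift from the endpoint $1$ determines a unique initial vertex, forcing $p=q$. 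Injectivity on open $2$-cells is then automatic, as two $2$-cells of $A$ mapping to the same cell of $\Gamma^2$ would pair up into a non-reduced sub-diagram incompatible with the injectivity on their boundaries.

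The base case $n=0$ is direct: $A_0$ is a single face labelled by a cyclically reduced relator $r$ of length at least $88$, so $C'(1/8)$ forces its boundary loop to embed in $\Gamma^1$. For any vertex $v \in \partial A_0$ one has $d(1,v) \le |r|/2$, and any $\Gamma^1$-geodesic from $v$ to $1$, concatenated with an appropriate arc of $\partial A_0$ followed by $L_\emptyset$ or $R_\emptyset$, bounds a geodesic bigon whose unique structure via Lemma~\ref{lem-thin-triangles} pins the geodesic onto $A_0^1$.

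For the inductive step, consider a vertex $p \in A_{n+1}\setminus A_n$. Either $p$ lies on a newly attached length-$K$ spoke above an endpoint $z$ of a segment of $E_{\ba_n}$, or on a newly attached face $\gamma_{uv}$ between two consecutive spokes. In the first case, the spoke is itself a $\Gamma^1$-geodesic by iterated application of Lemma~\ref{lem-champ-extend-geo}; I would argue that any $\Gamma^1$-geodesic from $p$ to $1$ must descend along the spoke into $A_n^1$ by showing the only alternative first steps (the two-exception neighbors of Lemma~\ref{lem-champ-extend-geo}) are excluded by the construction's use of Lemma~\ref{lem-champ-relator-parts} to discard the ``problematic'' three-step extension and by a $C'(1/8)$ bigon analysis ruling out long shared segments with other relators. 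In the second case, Lemma~\ref{lem-A-good-geodesics} directly asserts that every $\Gamma^1$-geodesic from $p$ to $1$ passes through $u$ or $v$ along the prescribed subpath of $\gamma_{uv}$; from $u$ or $v$ the remaining geodesic starts at a spoke-vertex already handled by the first case. Both cases terminate in $H(n)$.

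The step I expect to be the main obstacle is ensuring that distinct spokes and distinct newly attached faces remain distinct in $\Gamma^2$ at every branching level. A priori, two spokes from the same endpoint $z$ could fuse at step $3$, or decorations on adjacent spokes could fold back onto older structure. The input that blocks the former is exactly Lemma~\ref{lem-champ-relator-parts}, which prevents two chosen three-step extensions from sharing more than $|r|/4 + 3$ of any relator boundary; the input that blocks the latter is the separation of local minima of $d(1,\cdot)$ on $E_{\ba_n}$ by paths of length at least $50$, itself guaranteed inductively by Lemma~\ref{lem-A-good-geodesics}. Propagating these non-collision features through every branching level using the bigon and triangle geometry from Section~\ref{sec-sc-cayley} is where the technical work concentrates.
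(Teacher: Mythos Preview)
Your approach is workable but substantially more elaborate than the paper's. The paper gives a single, uniform argument that avoids induction on $n$ and avoids the spoke/face case split entirely.

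The paper's proof: take any $p \in A$ and any geodesic $\gamma' \subset \Gamma^1$ from $i(p)$ to $1$ whose first edge is not in $i(\St(p))$. There is, by construction, a geodesic $\gamma \subset A^1$ from $p$ to $1$; the images of $\gamma$ and $\gamma'$ form a bigon, so by Lemma~\ref{lem-thin-triangles} some relator $r$ shares at least $3|r|/8$ with $\gamma$ starting at $p$. Now comes the single crucial observation: $\gamma$ is a concatenation of arcs lying on boundaries of faces of $A$, punctuated only by the special length-$3$ extensions chosen via Lemma~\ref{lem-champ-relator-parts}. From the top of any such extension, no geodesic to $1$ begins with more than $|r'|/4+3$ of any relator $r'$; hence no such extension point can occur within the first $3|r|/8 - (|r|/4+3) = |r|/8 - 3 \geq |r|(1/8-\delta)$ vertices of $\gamma$. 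That initial stretch of $\gamma$ therefore lies entirely on the boundary of a single face $r_0$ of $A$, and the $C'(1/8-\delta)$ condition forces $r = r_0$, contradicting that $\gamma'$ left $i(\St(p))$. Iterating edge by edge gives the full lifting statement, and injectivity follows.

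Your inductive scheme contains the same ingredients, but the organization obscures the point. The ``$C'(1/8)$ bigon analysis'' you allude to in the spoke case \emph{is} the whole proof; once you isolate it, there is no need for $H(n)$, no need to separate spokes from faces (Lemma~\ref{lem-A-good-geodesics} is used in the paper to control $d(1,\cdot)$ along $E$ during the construction, not to lift geodesics here), and no need for a standalone injectivity discussion. In particular, the ``main obstacle'' you flag --- distinctness of spokes and newly attached faces at each branching level --- is a \emph{consequence} of the geodesic-lifting property (as you yourself note in your first paragraph), not a prerequisite for it, so the non-collision bookkeeping you anticipate is misdirected effort.
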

\begin{proof}
	By construction, there is at least one geodesic $\gamma$
	joining $i(p)$ to $1$ in $A^1$.
	Suppose there is some geodesic $\gamma' \subset \Gamma^1$ joining
	$i(p)$ to $1$, whose first edge is not in $i(\St(p))$.
	So $\gamma$ and $\gamma'$ form a bigon, and so by Lemma~\ref{lem-thin-triangles}
	there is some relation $r \in R$ so that the
	first $3|r|/8$ of $\gamma$ after $i(p)$ is a subword of $r$.
	
	The geodesic $\gamma$ is made up of segments in the boundary of relators in $A$,
	and special length three extensions that, by Lemma~\ref{lem-champ-relator-parts},
	do not have any geodesic to the identity which begins with
	a subword of length $|r'|/4+3$ of any relator $r' \in R$.
	
	Thus no such length three subword appears in the first 
	$3|r|/8 - (|r|/4+3) = |r|/8 -3$ vertices of $\gamma$.
	Therefore, $|r|/8 - 3 = |r|(1/8-3/|r|) \geq |r|(1/8 - \delta)$ 
	of $r$ bounds a relator in $A$,
	so $r$ is in $A$, contradicting the hypothesis that $\gamma'$ left $i(\St(p))$.
\end{proof}

\begin{lemma}\label{lem-A-qconvex}
	Consider $A^1$ and $\Gamma^1$ with their path metrics $d_A$ and $d_\Gamma$.
	Then the map $i:A^1 \ra \Gamma^1$ is a quasi-isometric embedding.
	
	In other words, $(A^1,d_A)$ is quasi-isometric to $(A^1,d_\Gamma)$,
	where $d_\Gamma$ is the pullback $d_\Gamma(x,y) = d_\Gamma(i(x),i(y))$.
\end{lemma}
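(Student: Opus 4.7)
The inequality $d_\Gamma(i(x), i(y)) \leq d_A(x, y)$ is immediate because $i \colon A^1 \to \Gamma^1$ is a $1$-Lipschitz inclusion of graphs. The substantial work is in the reverse direction, where I plan to establish $d_A(x, y) \leq d_\Gamma(i(x), i(y)) + C$ for a constant $C$ depending only on $M$. The two key tools are the $\delta$-hyperbolicity of $\Gamma^1$ with $\delta = 2M$ (from Lemma~\ref{lem-thin-triangles}) together with Lemma~\ref{lem-A-top-embed}, which guarantees that every $\Gamma$-geodesic from $1$ to a point of $A^1$ is an $A$-geodesic.

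Given $x, y \in A^1$, I would first fix $A$-geodesics $\gamma_x, \gamma_y$ from $1$ to $x, y$; by Lemma~\ref{lem-A-top-embed} these are also $\Gamma$-geodesics. Let $k = \lfloor (i(x), i(y))_1 \rfloor$ be the Gromov product in $\Gamma^1$ based at $1$, and set $p = \gamma_x(k)$, $q = \gamma_y(k)$. The $\delta$-thinness of the $\Gamma$-triangle with vertices $1, x, y$ yields $d_\Gamma(p, q) \leq C_1 M$ for some universal $C_1$. Since $p, q$ lie on the $A$-geodesics $\gamma_x, \gamma_y$, we have $d_A(x, p) = d_\Gamma(x, p)$ and $d_A(y, q) = d_\Gamma(y, q)$, so
\[
	d_A(x, y) \leq d_A(x, p) + d_A(p, q) + d_A(q, y) \leq d_\Gamma(x, y) + 2 + d_A(p, q).
\]
It therefore suffices to produce a constant $C_2 = C_2(M)$ with $d_A(p, q) \leq C_2$ whenever $p, q \in A^1$ satisfy $d_\Gamma(p, q) \leq C_1 M$.

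This bounded case is the main obstacle. The plan is to pick a $\Gamma$-geodesic $\sigma$ from $p$ to $q$ (of length at most $C_1 M$) and examine the reduced van Kampen diagram $\cD$ for the geodesic triangle with sides $\gamma_x|_{[1,p]}, \gamma_y|_{[1,q]}, \sigma$. By Lemma~\ref{lem-thin-triangles}, $\cD$ has no interior faces and, after removing spurs, at most six connected faces. Adapting the argument of Lemma~\ref{lem-A-top-embed}, each face of $\cD$ meeting $\gamma_x$ or $\gamma_y$ along an edge must already be a face of $A$: otherwise, the bigon formed by such a face with its incident portion of $\gamma_x$ (or $\gamma_y$) would produce an alternative $\Gamma$-geodesic from $1$ to $p$ (or $q$) that exits $A^1$, contradicting Lemma~\ref{lem-A-top-embed}. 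Since each face has boundary length at most $M$, traversing the at most six resulting faces produces an $A$-path from $p$ to $q$ of length at most $6M$, so $C_2 = 6M$ suffices. Combining everything yields $d_A(x, y) \leq d_\Gamma(x, y) + 6M + 2$, the desired quasi-isometric embedding.
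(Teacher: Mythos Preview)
Your overall architecture is close to the paper's --- both arguments look at a geodesic triangle with apex $1$ and invoke the diagram classification from Lemma~\ref{lem-thin-triangles} --- but the step where you assert that ``each face of $\cD$ meeting $\gamma_x$ or $\gamma_y$ along an edge must already be a face of $A$'' is a genuine gap. Lemma~\ref{lem-A-top-embed} only tells you that every $\Gamma$-geodesic from $1$ to a point of $A^1$ lies in $A^1$; it says nothing about $2$-cells adjacent to such a geodesic. Your proposed justification (``the bigon formed by such a face \ldots would produce an alternative $\Gamma$-geodesic from $1$ to $p$ that exits $A^1$'') does not work: replacing the segment $\gamma_x \cap \partial B$ by the complementary arc of $\partial B$ generally gives a strictly longer path, not a geodesic, so there is no contradiction with Lemma~\ref{lem-A-top-embed}. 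In particular, faces in the long spur of $\cD$ based at $1$ could a priori be relators that are not $2$-cells of $A$, and there may be far more than six of them.

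What the paper actually does is more delicate and more economical. It works with the triangle $1,x,y$ directly (no preliminary reduction to nearby points $p,q$), lets $p'',q''$ be the endpoints of the internal edge terminating the spur from $1$, and argues that the single relator $r$ carrying that edge must lie in $A$. The reason is not Lemma~\ref{lem-A-top-embed} as a black box, but the mechanism inside its proof: an $A$-geodesic to $1$ is built from boundary arcs of $A$-relators together with the special length-$3$ extensions chosen via Lemma~\ref{lem-champ-relator-parts}, so a relator $r \notin A$ can overlap such a geodesic along at most $|r|/4 + 3 + |r|(1/8-\delta)$. Since the two interior edges of $r$ are pieces of length $<|r|(1/8-\delta)$, summing the four sides gives total boundary length strictly less than $|r|$, a contradiction. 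This perimeter count is the missing idea in your sketch; once $d_A(p'',q'') \le M/8$ is established, the remaining distances are controlled straightforwardly because the central part of the triangle has at most six faces.
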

\begin{proof}
	Take any $x,y \in A^1$.
	Since $i$ is a topological embedding, clearly
	$d_A(x,y) \geq d_\Gamma(x,y)$.
	
	Consider the geodesic triangle in $\Gamma^1$ between $1$, $x$ and $y$ with edges
	$\gamma_{1x}$, $\gamma_{1y}$ and $\gamma_{xy}$.
	In light of Lemma~\ref{lem-thin-triangles}, consider the
	structure of a reduced diagram $\cD$ for this triangle.
	
	The geodesics $\gamma_{xy}$ and $\gamma_{1x}$ form a spur
	starting at $x$ that ends at a vertex or an interior edge of $\cD$
	joining $p \in \gamma_{xy}$ to $p' \in \gamma_{1x}$.
	Likewise, $\gamma_{xy}$ and $\gamma_{1y}$ form a spur
	starting at $y$ that ends at a vertex or an interior edge of $\cD$
	joining $q \in \gamma_{xy}$ to $q' \in \gamma_{1y}$.
	Also, $\gamma_{1x}$ and $\gamma_{1y}$ form a spur
	starting at $1$ that ends at a vertex or an interior edge of $\cD$
	joining $p'' \in \gamma_{1x}$ to $q'' \in \gamma_{1y}$.
	
	We claim that $d_A(p'',q'') \leq M/8$.  Either $p''=q''$,
	or $p''$ and $q''$ lie on an interior edge of a relator in the spur starting at $1$.
	If this relator lies in $A$, then we are done.
	Otherwise, by the same argument as in Lemma~\ref{lem-A-top-embed},
	its two external edges in $\gamma_{1x}$ and $\gamma_{1y}$ have length at most 
	$|r|/4+3+|r|(1/8-\delta)$.  We also know that the two internal edges
	have length at most $|r|(1/8 - \delta)$.
	Therefore the boundary of $|r|$ has length at most
	\[
		2(|r|/4+3+|r|(1/8-\delta)) + 2|r|(1/8 - \delta) = |r|(1-4\delta+6/|r|) < |r|,
	\]
	a contradiction.
	
	Since we chose $p,p',p'',q,q',q''$ to make the spurs as long as possible,
	the analysis of Lemma~\ref{lem-thin-triangles} shows that
	$[p,q]_{\gamma_{xy}}$ is adjacent to at most three faces in $\cD$,
	thus $d_\Gamma(p,q) \leq 3M/2$.
	
	Similarly, $d_\Gamma(p',p'')=d_A(p',p'') \leq 3M/2$ and
	$d_\Gamma(q',q'')=d_A(q',q'') \leq 3M/2$.
	Now, recall that $d_\Gamma(p,p'),d_\Gamma(q,q') \leq M/8$, so
	\begin{align*}
		d_A(x,p') & = d_\Gamma(x,p') \leq d_\Gamma(x,p)+M/8,\quad \text{and}\\
		d_A(y,q') & = d_\Gamma(y,q') \leq d_\Gamma(y,q)+M/8.
	\end{align*}
	
	Combining all these results, we see that
	\begin{align*}
		d_A(x,y) & \leq d_A(x,p')+d_A(p',p'')+d_A(p'',q'')+d_A(q'',q')+d_A(q',y)\\
			& \leq \left(d_\Gamma(x,p)+\frac{M}{8}\right) + \frac{3M}{2} + 
				\frac{M}{8} + \frac{3M}{2}
				+ \left( d_\Gamma(q,y) +\frac{M}{8}\right)\\
			& \leq d_\Gamma(x,y) + \frac{27M}{8}. \qedhere
	\end{align*}
\end{proof}


\section{A lower bound for conformal dimension}\label{sec-lower-cdim}

In this section, we will build a model space $X$ quasi-isometric to $A^1$, and
show that $\Cdim(\bdry X)$ has the desired lower bound.
Since we have a quasi-symmetric inclusion of $\bdry A$ into
$\bdry \Gamma = \bdry G$, this will complete the proof of Theorem~\ref{thm-sc-cdim-lower}.

Let $X$ be the graph with a vertex for each face in $A$,
and an edge between two vertices if the boundaries of the corresponding 
faces have non-empty intersection.

\begin{lemma}\label{lem-A-qi-to-X}
	$(A^{1},d_A) \stackrel{\text{q.i.}}{\simeq} (X,d_X)$
\end{lemma}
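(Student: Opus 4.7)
The plan is to exhibit an explicit quasi-isometry $\phi\colon X \to A^1$ by sending each face (i.e., vertex of $X$) to a vertex on its boundary, and to verify the two-sided distance estimate using the uniform bound $|r| \leq M$ on relator lengths. Concretely, for each face $F \subset A$ labelled by a relator $r$, pick $\phi(F) \in \partial F \cap A^0$. Moving along $\partial F$ in $A^1$ gives $\mathrm{diam}_{A^1}(\partial F) \leq |r|/2 \leq M/2$. Similarly, for each vertex $v \in A^0$ pick any face $F_v$ with $v \in \partial F_v$; this is possible because, by construction, every edge of $A^1$ lies on the boundary of at least one face of $A$. Note $d_A(v, \phi(F_v)) \leq M/2$, so $\phi$ is $(M/2)$-quasi-surjective.

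Next I would prove $d_A(\phi(F), \phi(F')) \leq M \cdot d_X(F,F')$. If $F$ and $F'$ are adjacent in $X$, their boundaries share a point; since $A$ is a polygonal $2$-complex whose faces are discs glued along edges, the intersection actually contains a vertex $v$, giving
\[
d_A(\phi(F), \phi(F')) \leq d_A(\phi(F), v) + d_A(v, \phi(F')) \leq M/2 + M/2 = M.
\]
Concatenating along a geodesic chain in $X$ yields the claimed bound.

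For the reverse inequality $d_X(F_v, F_w) \leq d_A(v,w) + 1$, take a geodesic $\gamma$ in $A^1$ from $v$ to $w$ with successive edges $e_1,\ldots,e_k$, where $k = d_A(v,w)$. Each $e_i$ lies on $\partial G_i$ for some face $G_i \subset A$. Consecutive edges $e_i, e_{i+1}$ meet at a common vertex, which lies in $\partial G_i \cap \partial G_{i+1}$, so $G_i$ and $G_{i+1}$ are either equal or adjacent in $X$. Since $v \in \partial F_v \cap \partial G_1$ and $w \in \partial G_k \cap \partial F_w$, prepending $F_v$ and appending $F_w$ produces a chain of length at most $k+1$ in $X$ from $F_v$ to $F_w$. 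Together with quasi-surjectivity and the previous paragraph, this shows that $\phi$ is a quasi-isometry with constants depending only on $M$.

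The argument is essentially routine; the only thing that requires any comment is the observation that two faces meeting at a point actually meet at a vertex (since all intersections in the polygonal $2$-complex $A$ occur along cells of lower dimension), which is what makes the chain argument go through cleanly.
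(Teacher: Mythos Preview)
Your proof is correct and follows essentially the same approach as the paper's: define a map $X \to A^1$ sending each face to a boundary vertex, use the bound $|r|\leq M$ to get the Lipschitz direction and quasi-surjectivity, and for the reverse direction walk along a geodesic in $A^1$, assigning each edge to a face containing it. The paper's version is slightly terser (it obtains $d_X(x,y)\le d_A(f(x),f(y))+2$ directly without introducing the auxiliary map $v\mapsto F_v$), but the content is the same.
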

\begin{proof}
	Let $f:X \ra A^{1}$ be a map that sends each vertex $x \in X$ to
	some vertex in $A$ on the edge of the corresponding face.  
	Clearly, every point in $A^{1}$ is within a $d_A$-distance of $M/2$
	from some point in $f(X)$.
	
	If $d_X(x,y)=1$ for $x,y \in X$, then $d_A(f(x),f(y)) \leq M$, where $M$ is
	the maximum perimeter of a face.  Thus for any $x,y \in X$,
	$d_A(f(x),f(y)) \leq M d_X(x,y)$.
	
	Each edge in a geodesic $[f(x),f(y)] \subset A^1$ is the edge
	of some face in $A$, and adjacent edges will give intersecting faces
	(by definition).
	Adding the faces for $x$ and $y$ to this chain, shows that
	$d_X(x,y) \leq d_A(f(x),f(y)) + 2$.
\end{proof}

We recall the relevant lemma of Pansu and Bourdon.

\begin{lemma}[{\cite[Lemma 1.6]{Bou-95-cdim-hyp-build}}]\label{lem-bourdon-bound}
	Suppose $Z$ is a compact metric space containing a family of
	curves $\cC = \{\gamma_i : i\in I \}$, with diameters uniformly
	bounded away from zero.
	
	Suppose further that there is a probability measure $\mu$ on $\cC$
	and constants $C>0$, $\sig > 0$ such that for all balls $B(z,r)$ in $Z$ 
	\[
		\mu(\{ \gam \in \cC | \gam \cap B(z,r) \neq \emptyset \}) \leq C r^\sig.
	\]
	Then the conformal dimension of $Z$ is at least $1+\frac{\sig}{\tau-\sig}$,
	where $\tau$ is the packing dimension of $Z$, and in fact $\tau-\sig \geq 1$.
\end{lemma}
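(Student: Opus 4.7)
The plan is to show that every metric $d'$ on $Z$ quasi\-symmetrically equivalent to the given metric $d$ satisfies $\dimH(Z,d') \geq \alpha$, where $\alpha := \tau/(\tau-\sigma) = 1 + \sigma/(\tau-\sigma)$. Taking the infimum over such $d'$ then yields $\Cdim(Z) \geq \alpha$. The side assertion $\tau-\sigma \geq 1$ will drop out at the end: the identity $(Z,d)\to(Z,d)$ is trivially quasisymmetric, so $\Cdim(Z) \leq \dimH(Z,d) \leq \dimP(Z,d) = \tau$; combined with the lower bound $\alpha \leq \Cdim(Z) \leq \tau$, this forces $\tau/(\tau-\sigma) \leq \tau$, i.e.\ $\tau - \sigma \geq 1$.

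Fix such a $d'$ with quasisymmetric control function $\eta$. Since $\diam_d(\gamma)$ is uniformly bounded below on $\cC$, applying $\eta$ to two points realizing this diameter transfers a uniform lower bound $c_0 > 0$ to $\diam_{d'}(\gamma)$ for every $\gamma \in \cC$. Take any finite cover $\{B_i\}$ of $Z$ by $d'$-balls of $d'$-diameter less than $c_0/2$. For each curve $\gamma$, the sub-collection of balls meeting $\gamma$ must cover $\gamma$, so
\[
\sum_{B_i \cap \gamma \neq \emptyset} \diam_{d'}(B_i) \;\geq\; \diam_{d'}(\gamma) \;\geq\; c_0.
\]
Integrating against the probability measure $\mu$ and interchanging the order of summation,
\[
c_0 \;\leq\; \sum_i \diam_{d'}(B_i)\cdot \mu(\{\gamma\in\cC : \gamma\cap B_i \neq \emptyset\}).
\]
By quasisymmetry, each $B_i$ is contained in a $d$-ball of some radius $r_i$ comparable (up to a multiplicative constant depending on $\eta$) to $\diam_d(B_i)$, so the hypothesis yields $\mu(\{\gamma\cap B_i\neq\emptyset\}) \leq C r_i^\sigma$.

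Now I apply Hölder's inequality with conjugate exponents $\alpha$ and $\tau/\sigma$ (observe $1/\alpha + \sigma/\tau = (\tau-\sigma)/\tau + \sigma/\tau = 1$) to obtain
\[
c_0 \;\leq\; C\left(\sum_i \diam_{d'}(B_i)^{\alpha}\right)^{\!1/\alpha}\!\left(\sum_i r_i^{\tau}\right)^{\!\sigma/\tau}.
\]
The main obstacle, and the heart of the argument, is to bound $\sum_i r_i^\tau$ by a constant independent of the cover. For this I pass, via a Vitali/$5r$-covering lemma in the metric $d$, to a disjoint sub-collection whose enlargements still cover $Z$ (at the cost of a fixed constant). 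For any $\tau' > \tau$ the definition of packing dimension bounds such disjoint sums $\sum r_i^{\tau'}$ uniformly, and letting $\tau'\downarrow\tau$ together with the conjugate exponent $\alpha' \uparrow \alpha$ absorbs the $\epsilon$-loss. Rearranging the Hölder inequality then yields a constant $K > 0$, independent of the cover, with
\[
\sum_i \diam_{d'}(B_i)^{\alpha} \;\geq\; K.
\]
Since this holds for every sufficiently fine $d'$-cover of $Z$, we conclude $\mathcal{H}^\alpha(Z,d') \geq K > 0$, hence $\dimH(Z,d') \geq \alpha$. Taking the infimum over $d'$ completes the proof.
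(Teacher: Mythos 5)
A preliminary remark: the paper offers no proof of this lemma — it is quoted verbatim from Bourdon (Lemma 1.6 of \cite{Bou-95-cdim-hyp-build}, following Pansu) — so your proposal can only be measured against that cited argument. Your skeleton is indeed the standard one: transfer the lower diameter bound to the deformed metric $d'$, use connectedness of each $\gamma$ to get $\sum_{B_i\cap\gamma\neq\emptyset}\diam_{d'}(B_i)\geq c_0$, integrate against $\mu$, and apply H\"older with exponents $\alpha=\tau/(\tau-\sigma)$ and $\tau/\sigma$. Two smaller points are fixable but stated too casually: the uniform lower bound $c_0$ on $\diam_{d'}(\gamma)$ needs a comparison with a third point (e.g.\ against $\diam_{d'}Z$), not just ``applying $\eta$ to two points''; and after the Vitali extraction you must rerun the whole chain over the enlarged balls $5D_j$ (they, not the original $B_i$, cover each curve), using quasisymmetry to compare $\diam_{d'}(5D_j)$ with $\diam_{d'}(B_j)$ — the index sets in your H\"older inequality do not currently match.

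The genuine gap is the step you yourself call the heart of the argument: ``for any $\tau'>\tau$ the definition of packing dimension bounds such disjoint sums $\sum_i r_i^{\tau'}$ uniformly.'' It does not. Uniform bounds on $\sum_i r_i^{\tau'}$ over disjoint families of small balls centred in $Z$ amount to finiteness of the packing \emph{pre}measure, whose critical exponent is the upper box-counting dimension $\overline{\dim}_B(Z)$; the packing dimension $\dimP(Z)$ is obtained only after an infimum over countable decompositions $Z=\bigcup_j Z_j$, and can be strictly smaller. For example, $Z=\{0\}\cup\{1/n : n\in\N\}$ has $\dimP(Z)=0$ but $\overline{\dim}_B(Z)=1/2$, and disjoint balls centred at the points $1/n$ with radii $\approx n^{-2}$ make $\sum_i r_i^{\tau'}$ unbounded for every $\tau'<1/2$, radii as small as you like. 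So as written your argument proves the lemma with $\tau$ equal to the upper box dimension of $(Z,d)$ — a strictly weaker statement in general (it would, as it happens, still suffice for Section~\ref{sec-lower-cdim}, where $\tau$ is bounded via a Lipschitz injection into the shift space $W$, whose box and packing dimensions coincide, but it is not the lemma as stated). To recover the packing-dimension version you need the decomposition characterization of $\dimP$ together with an argument localizing the curve family and the measure $\mu$ to a single piece of the decomposition; that is exactly the extra work in the Pansu--Bourdon proof which your sketch skips. A related logical point: the H\"older exponents already require $\sigma<\tau$, which is part of what the lemma asserts ($\tau-\sigma\geq 1$), so that inequality cannot simply be postponed to the end without handling the case $\sigma\geq\tau$.
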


We need to estimate $\sig$ and $\tau$ for $Z = \bdry X$.

By Lemma~\ref{lem-A-qconvex}, any geodesic in $(A^1,d_A)$ is within a uniformly
bound\-ed Hausdorff distance from a geodesic with the same endpoints in $\Gamma^1$.
Thus $(A^1,d_A)$ is also Gromov hyperbolic.
Since $X$ is quasi-isometric to $A^1$, it too is Gromov hyperbolic.
The boundary $\bdry X$ of $X$
carries a visual metric $\rho$ with parameter $\epsilon$, for some $\epsilon>0$.

In other words, for all points $u,v \in \bdry X$, connected by
a bi-infinite geodesic $\gamma_{uv} \subset X$,
\[
	\rho(u,v) \asymp e^{-\epsilon (u \cdot v)},
\]
where $(u \cdot v) = d(1, \gamma_{uv})$, and $\asymp$ indicates
a multiplicative error of $C_\rho \geq 1$.

Let $\cC = \{ \bdry A_\ba \subset Z : \ba \in T^\N \}$, where 
we abuse notation by identifying $A_\ba$ with $f(A_\ba) \subset X$.
These curves have diameters uniformly bounded away from $0$ since
any geodesic in $X$ asymptotic to the endpoints of $\bdry A_\ba$
must pass uniformly close to the origin.
There is a natural probability measure $\mu$ on $T^\N$ so that, for fixed
$b_1, \ldots, b_n \in T$,  $n \in \N$,
\[
	\mu(\{ (a_1, a_2, \ldots) \in T^\N : a_i = b_i, 1 \leq i \leq n \}) = |T|^{-n}.
\]

\begin{lemma}
	For this choice of $\cC$, $\rho$, $\mu$, we can take
	$\sig = (\log |T|)/\eps$.
\end{lemma}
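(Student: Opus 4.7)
The strategy is to control $\mu(\{\gamma \in \cC : \gamma \cap B(z,r) \neq \emptyset\})$ by translating the visual-metric hypothesis into a cylinder-containment statement in $T^\N$. Recall that $\rho(u,v) \asymp e^{-\eps (u \cdot v)}$ on $\bdry X$, where the Gromov product is based at the vertex $v_0 \in X$ corresponding to the initial face $A_0$. If $\gamma_\ba$ and $\gamma_{\ba'}$ both meet $B(z,r)$, picking $u \in \gamma_\ba \cap B(z,r)$ and $u' \in \gamma_{\ba'} \cap B(z,r)$ gives $\rho(u,u') \le 2r$ by the triangle inequality, hence $(u \cdot u') \ge \eps^{-1}\log(1/r) - C_1$ for a universal $C_1$ depending only on $\eps$ and $C_\rho$.

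The heart of the argument is the following geometric claim: if $\ba, \ba' \in T^\N$ first disagree at position $k+1$, and $u \in \bdry A_\ba$, $u' \in \bdry A_{\ba'}$, then $(u \cdot u') \le k + C_2$. Lemmas~\ref{lem-A-top-embed} and~\ref{lem-A-qconvex} show that any geodesic in $\Gamma^1$ toward a point of $\bdry A_\ba$ lies in $A_\ba$, and that the metrics on $X$, $A^1$, and $\Gamma^1$ agree up to bounded additive and multiplicative error. Since $A_\ba \cap A_{\ba'} = A_{(a_1,\ldots,a_k)}$, any geodesics in $X$ targeting $\gamma_\ba$ and $\gamma_{\ba'}$ must diverge once they leave the shared region $A_{(a_1,\ldots,a_k)}$. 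Because the round-tree construction glues the step-$(k{+}1)$ faces directly onto the outer boundary $E_{\ba_k}$, the $X$-distance from $v_0$ to this boundary is $k + O(1)$, which bounds $(u \cdot u')$.

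Combining the two estimates, any two $\ba, \ba' \in \cC(z,r) := \{\ba : \gamma_\ba \cap B(z,r) \neq \emptyset\}$ must agree on the first $k \ge \eps^{-1}\log(1/r) - C_3$ coordinates. Fixing a reference $\ba_0 \in \cC(z,r)$ (if this set is nonempty) places $\cC(z,r)$ inside the cylinder $\{\ba : a_i = (\ba_0)_i, \, 1 \le i \le n(r)\}$ with $n(r) = \lceil \eps^{-1}\log(1/r) - C_3 \rceil$. The $\mu$-mass of this cylinder is $|T|^{-n(r)} \lesssim r^{\log|T|/\eps}$, yielding the lemma with $\sigma = (\log |T|)/\eps$.

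The main obstacle is the calibration in the second paragraph: verifying that each step of the round tree contributes exactly $1 + O(1)$ to $X$-distance from $v_0$, so that ``tree-step number'' and ``depth in $X$'' agree on the nose rather than up to a nontrivial multiplicative factor. This requires a careful reading of the inductive construction of $A_{\ba_{n+1}}$ from $A_{\ba_n}$: the new faces are $X$-adjacent to faces already present on $E_{\ba_n}$, so they sit at $X$-distance exactly one beyond the previous outer boundary. The slim-triangle and round-tree arguments in Section~\ref{sec-round-tree}, especially Lemma~\ref{lem-A-good-geodesics}, then rule out any ``shortcut'' in $\Gamma^1$ that would let a geodesic between distinct branches avoid separating at the correct tree-depth.
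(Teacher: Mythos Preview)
Your proposal is correct and follows essentially the same approach as the paper. The paper's version is slightly more direct: it uses that the center $z$ itself already lies in some $\bdry A_\ba$ (so one compares $z$ with $w \in B(z,r)$ directly, avoiding your triangle-inequality step with $2r$ and the ``fix a reference $\ba_0$'' maneuver), and for the Gromov-product bound it gives a one-line separation argument---deleting all vertices of $X$ at $X$-distance $n$ from the root disconnects $\bdry A_\ba$ from $\bdry A_{\mathbf{b}}$, so the geodesic $\gamma_{zw}$ must come within $n$ of the root---in place of your more elaborate calibration discussion invoking Lemmas~\ref{lem-A-top-embed}, \ref{lem-A-qconvex}, and~\ref{lem-A-good-geodesics}.
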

\begin{proof}
	Fix some $z \in \bdry X$.  Then $z$ lies in the boundary
	of some $A_\ba$, $\ba = (a_1, a_2, \ldots) \in T^\N$.
	
	Suppose $w \in B(z,r) \subset \bdry X$.
	Then $w$ lies in the boundary
	of some $A_\mathbf{b}$, $\mathbf{b} = (b_1, b_2, \ldots) \in T^\N$,
	and
	\[
		\frac{1}{C_\rho} e^{-\eps (z \cdot w)} \leq \rho(z,w) \leq r,
	\]
	so
	\[
		(z \cdot w) \geq (-1/\eps) \log(C_\rho r).
	\]
	Suppose $a_n \neq b_n$, and $n$ is the smallest such $n$.
	Then deleting all vertices in $X$ at distance $n$ from the root face
	will disconnect the boundaries of $A_\ba$ and $A_\mathbf{b}$,
	and so $\gamma_{zw}$ must pass within $n$ of the root face.
	Thus $(z \cdot w) \leq n$, so if $w \in B(z,r)$, $a_m = b_m$ for
	all
	\[
		m < (-1/\eps) \log(C_\rho r).
	\]
	Therefore, 
	\begin{align*}
		\mu(\{\mathbf{b} \in T^\N : A_\mathbf{b} \cap B(z,r) \neq \emptyset \})
		& \leq |T|^{(1/\eps) \log(C_\rho r) + 1} \\
			& \leq |T|^{1+\log(C_\rho)/\eps} \cdot r^{(\log|T|)/\eps}.\qedhere
	\end{align*}
\end{proof}

It remains to bound $\tau$.
When we built $A_{n+1}$ from $A_n$, we added $|T|$ faces to each segment
along $E_{n}$.  So each face in $A_n$ that bordered $E_n$ could have
at most $M|T|$ faces joined on to it.
This gives a way to label every point in $\bdry X$ by an element
of
\[
	W = \{1, 2, \ldots, M|T|\}^\N,
\]
and we denote that labelling by $f:\bdry X \ra W$, which is an injection.

Put the metric $\rho_W$ on $W$, where
\[
	\rho_W((a_1,a_2,\ldots),(b_1,b_2,\ldots))
	= \exp(-\eps \min\{n : a_n \neq b_n\}).
\]
Then $f^{-1} : f(\bdry X) \ra \bdry X$ is a Lipschitz bijection, so
\[
	\tau = \dimP(\bdry X) \leq \dimP(f(\bdry X)) \leq \dimP(W) = \log(M|T|)/\eps.
\]

Thus, by Lemma~\ref{lem-bourdon-bound},
\begin{align*}
	\Cdim(\bdry X) & \geq 1+ \frac{\sig}{\tau-\sig}
		\geq 1+ \frac{\log(|T|)/\eps}{\log(M|T|)/\eps - \log(|T|)/\eps} \\
		& = 1 + \frac{\log |T|}{\log(M)}.
\end{align*}
Since $|T| = 3 \cdot (2m-2)^{K-3}$,
$K = \lfloor M^*/2 - 3\rfloor$,
and $M^* \geq 12$, we have
\[
	\log |T| = \log(3/8)+K \log(2m-2) 
		\geq C M^* \log(2m),
\]
for $C = 1/100$.

Finally,
\[
	\bdry X \stackrel{\text{q.s.}}{\simeq} \bdry A 
		\stackrel{\text{q.s.}}{\subset} \bdry \Gamma = \bdry G,
\]
so we conclude that
\begin{qedequation*}
	\Cdim(\bdry G) \geq
	\Cdim(\bdry X) \geq 1+ C\log(2m) \cdot \frac{M^*}{\log(M)}. \qedhere
\end{qedequation*}


\bibliographystyle{plain}
\bibliography{biblio}

\end{document}